\DeclareFontFamily{U}{matha}{\hyphenchar\font45}
\DeclareFontShape{U}{matha}{m}{n}{
      <5> <6> <7> <8> <9> <10> gen * matha
      <10.95> matha10 <12> <14.4> <17.28> <20.74> <24.88> matha12
      }{}
\DeclareSymbolFont{matha}{U}{matha}{m}{n}
\DeclareMathSymbol{\operp}         {2}{matha}{"6B}
\newcommand{\bp}{{\bar{P}}}
\newcommand{\D}{{\mathcal{D}}}
\newcommand{\V}{{\mathcal{V}}}
\newcommand{\T}{{\mathcal{T}}}
\newcommand{\C}{{\mathcal{C}}}
\newcommand{\K}{{\mathcal{K}}}
\newcommand{\R}{\mathds{R}}
\newcommand{\sfe}{\mathds{S}^1}
\newcommand{\N}{\mathds{N}}
\newcommand{\mx}{\mathfrak{X}}
\newcommand{\lie}[1]{\mathfrak{#1}}
\newcommand{\dr}{\mathbf{d}}
\newcommand{\ldr}[1]{{\pounds}_{#1}}
\newcommand{\ip}[1]{{\mathbf{i}}_{#1}}
\newcommand{\an}[1]{\arrowvert_{#1}}
\newcommand{\pair}[1]{\langle {#1}\rangle}
\newcommand{\pairing}{\langle\cdot\,,\cdot\rangle}
\DeclareMathOperator{\erz}{span}
\DeclareMathOperator{\dom}{Dom}
\DeclareMathOperator{\Id}{Id}
\DeclareMathOperator{\ann}{ann}
\DeclareMathOperator{\SO}{SO(3)} 
\newcommand{\poi}[1]{\{#1\}}
\newcommand{\pois}{\poi{\cdot\,,\cdot}}
\newtheorem{theorem}{Theorem}[section]
\newtheorem{lemma}[theorem]{Lemma}
\newtheorem{proposition}[theorem]{Proposition}
\newtheorem{corollary}[theorem]{Corollary}
\theoremstyle{definition}
\newtheorem{definition}[theorem]{Definition}
\newtheorem{example}[theorem]{Example}
\theoremstyle{remark}
\newtheorem{remark}[theorem]{Remark}
\numberwithin{equation}{section}
\begin{document}
\allowdisplaybreaks[1]
\date{}
\title{Singular reduction of Dirac structures.\footnote{\emph{\tiny Appeared in Transactions
of the American Mathematical Society (2011), Volume 363, 2967-3013.
Corrected version, 17/10/2011: The definition of ``locally finitely generated distributions'' and the (wrong)
statement about their integrability  were not needed and have 
been erased.}}}

\author{M. Jotz}
\address{Section de Math{\'e}matiques\\ 
Ecole Polytechnique
  F{\'e}d{\'e}rale de Lausanne\\ 
1015 Lausanne\\ Switzerland\\}
\curraddr{}
\email{madeleine.jotz@a3.epfl.ch}

\author{T.S. Ratiu}
\address{Section de Math{\'e}matiques\\
 et Centre Bernouilli\\ 
Ecole Polytechnique
  F{\'e}d{\'e}rale de Lausanne\\ 
1015 Lausanne\\ Switzerland\\}
\curraddr{}
\email{tudor.ratiu@epfl.ch}
\thanks{The first two authors were partially supported by
   Swiss NSF grant 200021-121512.}

\author{J. \'Sniatycki}
\address{Department of Mathematics and Statistics\\
  University of Calgary\\ Calgary, Alberta\\ Canada \\}
\curraddr{}
\email{sniat@math.ucalgary.ca}
\thanks{The third author was supported by an NSERC Discovery Grant.}

\maketitle

\begin{abstract}
The regular reduction of a Dirac manifold acted upon freely and properly by a
Lie group is generalized to a nonfree action. For this, several facts about
$G$-invariant vector fields and one-forms are shown.

\textbf{AMS Classification:} Primary subjects: 70H45, 70G65, 

\hspace{3cm} Secondary subjects: 70G45, 53D17, 53D99

\textbf{Keywords:} Dirac structures, singular reduction,
proper action.
\end{abstract}

\tableofcontents

\section{Introduction}

Dirac structures generalize  Poisson and symplectic manifolds.
They also provide a
convenient geometric setting for the theory of nonholonomic systems.
This concept was introduced in \cite{CoWe88} and \cite{Courant90a} and has seen a
significant development in the recent past both from the geometric
point of view as well as in applications to mechanical systems and
circuit theory. In the presence of symmetry, one can perform a reduction
to eliminate variables. The modern global formulation of reduction of
Hamiltonian systems with symmetry is due to Marsden and Weinstein  
\cite{MaWe74} who treat
free and proper symplectic actions admitting an equivariant momentum
map. This was generalized to Poisson manifolds  in \cite{MaRa86}. When
dealing with implicit Hamiltonian systems, which can be seen as sets
of algebraic and differential equations, the geometric description is
based on Dirac structures. Hence it is natural to ask if a symmetric
Dirac manifold can be reduced. This was carried out for a free and
proper Dirac action in \cite{BlvdS01} and \cite{Blankenstein00} within
the context of generalized Poisson structures and can be derived 
as an easy case  of the results in the paper
\cite{BuCaGu07} about reduction of Courant algebroids.
The methods of \cite{BlvdS01} and \cite{BuCaGu07} in the particular case of 
interest to us are equivalent up to a small difference in assumptions
(see \cite{JoRa11c}).
It is shown in \cite{JoRaZa11} that the  assumptions 
in \cite{BlvdS01}  can be weakened to the hypotheses 
of \cite{BuCaGu07} in the case of a free and proper action on the underlying
manifold $M$.
Singular Dirac reduction was treated in \cite{BlRa04}
using the following setup: the symmetric Dirac structure is viewed as
a generalized Poisson structure with a momentum map and a reduction of
implicit Hamiltonian systems is performed at all values
of the momentum map, including singular ones. It turns out that each  
stratum of the reduced
space (which is a Whitney stratified cone space since the action
is proper) inherits a Dirac structure. In addition, Hamiltonian  
dynamics on the  original manifold descend to each stratum of the  
quotient.

In this paper, we study the reduction of a smooth Dirac manifold
$(M,D)$ by a proper Dirac action of a Lie group $G$ completely within
the Dirac category: that is, certain nontrivial technical hypotheses
on various distributions present in \cite{BlvdS01},
\cite{Blankenstein00} and  \cite{BuCaGu07} 
(in the case of interest to us) are eliminated. 
This is
achieved by working directly with smooth structures on stratified
spaces.  This approach, known as singular reduction, was initiated in  
\cite{Cushman83} and formalized in \cite{ArCuGo91}. In \cite{CuSn01},  
singular reduction was shown to be an application of the theory of  
differential spaces.

The concepts of vector fields and one-forms on Whitney
stratified spaces are reviewed and applied to the quotient space of
the manifold by the action. We show in Theorem \ref{singred2} that the
descending sections of the
Dirac structure push forward to a subset of the pairs formed by local
vector fields and one-forms on the reduced space that is, in a sense,
self-orthogonal. This leads to the following natural question: do the
strata of the reduced space inherit Dirac structures induced by $D$?
We show in Theorem \ref{singred} that this is true if one assumes that
the set of descending sections generates a certain subdistribution of
the Pontryagin bundle of $M$. To achieve this, we employ several new
techniques. Using the existence of $G$-invariant Riemannian metrics
for proper actions on  paracompact manifolds and the tube theorem, we
introduce averages of vector fields and one-forms on $G$-invariant
open subsets of $M$. In a crucial step of the proof, we use the fact
that, in certain situations, $G$-invariant averages of vector fields
and one-forms vanish. Also, we study the relationship between the  
pointwise and the smooth  orthogonal distribution of a smooth  
generalized subdistribution of a vector bundle endowed
with a symmetric nondegenerate pairing. This allows us to
describe the smooth annihilator of an intersection of
smooth generalized distributions in certain cases of interest to us.

The paper is organized as follows. Dirac structures are reviewed in
Section \ref{sec:Dirac_structures} and vector fields on differential
spaces, stratifications, and orbit type manifolds in Section
\ref{sec:differential_spaces}. Generalized distributions and the
integrability of tangent distributions  as well as pointwise and
smooth annihilators are introduced and discussed in Section \ref{dis}.
The averaging method is presented in Section \ref{sec:proper}. Using
this technique, we show that the strata of the quotient $\bar{M}$  
correspond to  the quotients of the orbit type strata on $M$ and that  
the local
one-forms  on the manifold $M$ descend to analogous objects on the
reduced stratified space $\bar{M}$. Then we study the properties of
descending sections of the Pontryagin bundle and get many technical
results needed in the final reduction proof. Section 6 is devoted to
the main result of the paper, namely singular Dirac reduction. First we
recall  the reduction procedure in the case of conjugated
isotropy subgroups. Then the two main theorems of the paper (Theorems
\ref{singred2} and \ref{singred}) are proved and the reduced dynamics of implicit
Hamiltonian systems is constructed. Several examples are also given.
\medskip

\noindent\emph{Conventions, definitions, and notations}
In this paper we are working in the\emph{\ smooth category}. All sets
considered here are smooth \emph{subcartesian spaces}; see Section 2. In
particular, all manifolds and maps are assumed to be smooth. Moreover, the
manifold $M$ is \emph{paracompact} and the Lie group $G$ acting on it  
is \emph{connected}. If not mentioned in the text, the action of $G$  
on $M$ is
always assumed to be \emph{proper}.

We will write $C^\infty(M)$ for the sheaf of local functions
$C^\infty_{\rm  loc}(M)$ on $M$. That is,
an element $f \in C^\infty(M)$ is a smooth function $f:U\to \R$, with $U$ an
open subset of $M$.
In the same manner, if $E$ is a vector bundle over $M$, or a generalized
distribution on $M$, we will denote by
$\Gamma(E)$ the set of local sections of $E$. In particular, the sets of local
vector fields and one-forms on $M$ will be denoted by $\mx(M)$ and
$\Omega^1(M)$, respectively. We will write $\dom(\sigma)$ for the open  
domain of definition  of the section $\sigma$ of $E$.

A section $X$ (respectively $\alpha$) of $TM$ (respectively $T^*M$) will be
called \emph{$G$-invariant} if $\Phi_g^*X=X$ (respectively
$\Phi_g^*\alpha=\alpha$) for all $g\in G$, where $\Phi: G\times M\to  
M$ is the action of $G$ on $M$. Here, the vector field $\Phi_g^\ast X$  
is defined by $\Phi_g^\ast X=T\Phi_{g^{-1}}\circ X\circ\Phi_g$, that is,
$(\Phi_g^\ast X)(m)=T_{gm}\Phi_{g^{-1}}X(gm)$ for all $m\in M$.

Recall that a subset $ N \subset  M $ is an \emph{initial} submanifold
of $M$ if  $N$ carries a manifold structure such that the inclusion
$\iota :N \hookrightarrow M$ is a smooth immersion and satisfies
the following condition: for any smooth manifold $P$, an arbitrary
map $g: P \rightarrow  N$ is smooth if and only if $\iota \circ  g: P
\rightarrow  M$ is smooth. The notion of initial submanifold lies strictly
between those of injectively immersed and embedded submanifolds.

In the following, we write $TM\operp TM^*$ for the sum of the vector  
bundles  $TM$ and $T^*M$ and use the same notation for the sum of a  
tangent
(that is, a subdistribution of $TM$) and cotangent distribution (a  
subdistribution of
$T^*M$; see Section \ref{dis} for the definitions of those objects). We choose
this notation because we want to distinguish these direct sums from  
direct sums
of subdistributions of a bundle, which will be written as usual with $\oplus$.

\section{Generalities on Dirac structures}\label{sec:Dirac_structures}

\noindent\emph{Dirac structures} \label{admsble}
The \emph{Pontryagin bundle} $TM \operp T^* M$ of  a smooth manifold $M$ is
endowed with a nondegenerate symmetric fiberwise bilinear form of signature
$(\dim M, \dim M)$ given by
\begin{equation}\label{pairing}
\left\langle (u_m, \alpha_m), ( v_m, \beta_m ) \right\rangle : 
= \left\langle\beta_m , u_ m \right\rangle + \left\langle\alpha_m, v _m \right\rangle
\end{equation}
for all $u _m, v _m \in T _mM$ and $\alpha_m, \beta_m \in T^\ast_mM$. A
\emph{Dirac structure} 
(see \cite{Courant90a}) on $M $ is a Lagrangian subbundle $D \subset TM \operp
T^* M $. That is, 
$ D$ coincides with its orthogonal relative to \eqref{pairing} and so its
fibers are necessarily 
$\dim M $-dimensional.

The space $\Gamma(TM \operp T ^\ast M) $ of local sections of the Pontryagin
bundle is also endowed with an
$\mathds{R}$-bilinear skew-symmetric bracket (which does not 
satisfy the Jacobi identity) given by
\begin{align}\label{wrong_bracket}
[(X, \alpha), (Y, \beta) ] : &
= \left( [X, Y],  \ldr{X} \beta - \ldr{Y} \alpha + \frac{1}{2}
  \mathbf{d}\left(\alpha(Y) 
- \beta(X) \right) \right) \nonumber \\
&= \left([X, Y],  \ldr{X} \beta - \mathbf{i}_Y \mathbf{d}\alpha 
- \frac{1}{2} \mathbf{d} \left\langle (X, \alpha), (Y, \beta) \right\rangle
\right)
\end{align}
(see \cite{Courant90a}). The Dirac structure is \emph{integrable} or\emph{closed} if $[ \Gamma(D), \Gamma(D) ] \subset \Gamma(D) $. Since 
$\left\langle (X, \alpha), (Y, \beta) \right\rangle = 0 $ if $(X, \alpha), (Y,
\beta) \in \Gamma(D)$, 
integrability of the Dirac structure is often expressed in the literature
relative to a non-skew-symmetric
 bracket that differs from 
\eqref{wrong_bracket} by eliminating in the second line the third term of
the second component. This truncated expression which satisfies the Jacobi
identity but is no longer skew-symmetric is called the \emph{Courant bracket}:
\begin{equation}\label{Courant_bracket}
[(X, \alpha), (Y, \beta) ] : = \left( [X, Y],  \ldr{X} \beta - \ip{Y} \dr\alpha \right).
\end{equation}  

\noindent\emph{Symmetries of Dirac manifolds}
Let $G$ be a Lie group and
$\Phi: G\times M \rightarrow M$ a smooth left action. Then $G$ is called a
\emph{symmetry Lie group of} $D$ if for every $g\in G$ the condition
$(X,\alpha) \in \Gamma(D)$ implies that  $\left( \Phi_g^\ast X, \Phi_g^\ast
  \alpha \right) \in \Gamma(D)$. We say then that the Lie group $G$ acts
\textit{canonically} or \textit{by Dirac actions} on $M$. 

Let $\mathfrak{g}$ be a Lie algebra and $\xi \in \mathfrak{g} \mapsto \xi_M \in
\mathfrak{X}(M)$ be a smooth left Lie algebra action; that is, the map $(x, \xi) \in
M \times \mathfrak{g} \mapsto \xi_M(x) \in TM $ is smooth and $\xi \in\mathfrak{g}
\mapsto \xi_M  \in \mathfrak{X}(M)$ is a Lie algebra anti-homomorphism.  The Lie
algebra $\mathfrak{g}$ is said to be a 
\emph{symmetry Lie algebra of} $D$ if for every $\xi \in \mathfrak{g}$ the condition
$(X,\alpha) \in \Gamma(D)$ implies that  
$\left(\ldr{\xi_M}X,\ldr{\xi_M}\alpha \right) \in
\Gamma(D)$.  Of course, if $\mathfrak{g}$ is the Lie algebra of
$G $ and $\xi\mapsto \xi_M$ the  Lie algebra anti-homomorphism, then if $G $
is a symmetry Lie group  of $D$ it follows that $\mathfrak{g}$ is a symmetry Lie
algebra of $D$.

\section{Differential spaces}\label{sec:differential_spaces}

\subsection{Subcartesian spaces}\label{sub}

A differential structure on a topological space $S$ is a family $C^{\infty
}_{\rm glob}(S)$ of
real-valued functions on $S$ such that:

\begin{enumerate}
\item[\textbf{A1.}] The family 
\[
\{f^{-1}((a,b))\mid f\in C^{\infty }_{\rm glob}(S),\,a,b\in \mathds{R}\} 
\]
is a subbasis for the topology on $S.$

\item[\textbf{A2.}] If $f_{1},...,f_{n}\in C^{\infty }_{\rm glob}(S)$ and $F\in C^{\infty }(
\mathds{R}^{n})$, then $F(f_{1},...,f_{n})\in C^{\infty }_{\rm glob}(S).$

\item[\textbf{A3.}] If $f:S\rightarrow \mathds{R}$ is such that, for every $x\in S$,
there exist an open neighborhood $U_{x}$ of $x$ and a function $f_{x}\in
C^{\infty }_{\rm glob}(S)$ satisfying 
\[
f_{x}\an{U_{x}}=f\an{U_{x}}, 
\]
then $f\in C^{\infty }_{\rm glob}(S).$ 
\end{enumerate}
Here the vertical bar $\mid $ denotes the
restriction. Note that we write $C^{\infty}_{\rm glob}(S)$  to distinguish
this set of functions, whose elements are defined on the whole of $S$, from
sheaves of smooth functions if the space is also endowed with a smooth
structure.

\noindent A differential space is a space $S$ endowed with a differential
structure $C^{\infty }_{\rm glob}(S)$.  Clearly,
smooth manifolds are differential spaces. However, the category of
differential spaces is much larger than the category of manifolds.

Let $R$ and\thinspace $S$ be differential spaces with differential
structures $C^{\infty }_{\rm glob}(R)$ and $C^{\infty }_{\rm glob}(S),$ respectively. 
A map $\phi
:R\rightarrow S$ is said to be smooth if $\phi ^{\ast }(f)
=f\circ\phi\in C^{\infty }_{\rm glob}(R)$
for all $f\in C^{\infty }_{\rm glob}(S)$. A smooth map between differential spaces is a
diffeomorphism if it is invertible and its inverse is smooth.

If $R$ is a differential space with differential structure $C^{\infty }_{\rm glob}(R)$
and $S$ is a subset of $R$, then we can define a differential structure 
$C^{\infty }_{\rm glob}(S)$ on $S$ as follows. A function $f:S\rightarrow \mathds{R}$ is
in $C^{\infty }_{\rm glob}(S)$ if and only if, for every $x\in S,$ there is an open
neighborhood $U$ of $x$ in $R$ and a function $f_{x}\in C^{\infty }_{\rm glob}(R)$ such
that $f\an{S\cap U}=f_{x}\an{S\cap U}$. The differential structure $C^{\infty
}_{\rm glob}(S)$ described above is the smallest differential structure on $S$ such
that the inclusion map $\iota :S\rightarrow R$ is smooth. We shall refer to $
S$ with the differential structure $C^{\infty }_{\rm glob}(S)$ described above as a
differential subspace of $R$. If $S$ is a closed subset of $R,$ then the
differential structure $C^{\infty }_{\rm glob}(S)$ described above consists of
restrictions to $S$ of functions in $C^{\infty }_{\rm glob}(R)$.

A differential space $R$ is said to be locally diffeomorphic to a
differential space $S$ if, for every $x\in R$, there exists a neighborhood $
U$ of $x$ diffeomorphic to an open subset $V$ of $S$. More precisely, we
require that the differential subspace $U$ of $R$ is diffeomorphic to the
differential subspace $V$ of $S$. A differential space $R$ is a smooth
manifold of dimension $n$ if and only if it is locally diffeomorphic to 
$\mathds{R}^{n}$. A Hausdorff differential space that is locally
diffeomorphic to subsets of $\mathds{R}^{n}$ is called a \emph{subcartesian space}.
In the following, we restrict our considerations to subcartesian spaces.

Differential spaces were introduced in  \cite{Sikorski67};
see also \cite{Sikorski71} and \cite{Sikorski72}. The original definition of a
subcartesian space, in terms of a singular atlas, was given in
\cite{Aronszajn67}. 
The characterization of subcartesian spaces
used here can be found in \cite{Walczak73}, where the term \emph{
differential spaces of class $D_{0}$} is used. A comprehensive bibliography
of differential spaces is given in \cite{BuHeMuSa93}.

\subsection{Vector fields}

In this subsection, we review integration of vector fields and distributions
on subcartesian spaces following \cite{Sniatycki03}.

Let $S$ be a subcartesian space with differential structure 
$C^{\infty }_{\rm glob}(S)$.
 A derivation on $C^{\infty }_{\rm glob}(S)$ is an $\R$-linear map $X:C^{\infty
}_{\rm glob}(S)\rightarrow C^{\infty }_{\rm glob}(S):f\mapsto X(f)$ 
satisfying Leibniz' rule 
\begin{equation}
X(f_{1}f_{2})=X(f_{1})f_{2}+f_{1}X(f_{2}).  \label{Leibniz}
\end{equation}
We denote the space of derivations of $C^{\infty }_{\rm glob}(S)$ by 
$\operatorname{Der}
C^{\infty }_{\rm glob}(S).$ It has the structure of a Lie algebra 
with the Lie bracket $
[X_{1},X_{2}]$ defined by 
\[
\lbrack X_{1},X_{2}](f)=X_{1}(X_{2}(f))-X_{2}(X_{1}(f)) 
\]
for every $X_{1},X_{2}\in \operatorname{Der}C^{\infty }_{\rm glob}(S)$ 
and $f\in C^{\infty
}_{\rm glob}(S).$

Let $I$ be an interval in $\mathds{R}$. A smooth map $c:I\rightarrow
S$ is an integral curve of a derivation $X$ if 
\begin{equation}
\frac{d}{dt}f(c(t))=X(f)(c(t)) \label{integral}
\end{equation}
for all $f\in C^{\infty }_{\rm glob}(S)$ and $t\in I$. If $I$ is closed and $t$ is an
endpoint of $I$, then the derivative on the left hand side of equation \eqref{integral} is one-sided. In the limiting case, when $I$ consists of only
one point, the left hand side of (\ref{integral}) is undefined. We extend
the definition of an integral curve to this case by declaring that a map 
$c:\{t_{0}\}\rightarrow S$, with domain consisting of a single point in 
$\mathds{R}$, is an integral curve of every derivation $X$. An integral curve
of $X$ through a point $x_{0}\in S$ is an integral curve $c:I\rightarrow S$
of $X$ such that $0\in I$ and $c(0)=x_{0}$. An integral curve 
$c:I\rightarrow S$ of $X$ through $x_{0}$ is maximal if its domain $I$
contains the domain of every integral curve of $X$ through $x_{0}.$

\begin{remark}
Let $S$ be a subcartesian space. For every derivation 
$X\in \operatorname{Der}C^{\infty }_{\rm glob}(S)$ and each $x_{0}\in S$ there 
exists a unique maximal
integral curve of $X$ through $x_{0}$. A proof of this can be found in 
\cite{Sniatycki02}.
\end{remark}

A \emph{vector field on a subcartesian space} is a derivation $X$ of $C^{\infty
}_{\rm glob}(S) $ such that translations along integral curves of $X$ give rise to a
one-parameter local group $\phi _{t}^{X}$ of local diffeomorphisms of $S$.
In other words, 
\begin{equation*}
\frac{d}{dt}f\left( \phi _{t}^{X}(x)\right) =X(f)\left( \phi
_{t}^{X}(x)\right)
\end{equation*}
for every $f\in C^{\infty }_{\rm glob}(S)$ and each $(t,x)\in \mathbb{R}\times S$ for
which $\phi _{t}^{X}(x)$ is defined. Let $\mathfrak{X}_{\rm glob}(S)$ denote the family
of all vector fields on a subcartesian space $S$. The orbit $S_{x}$ of $
\mathfrak{X}_{\rm glob}(S)$ through $x$ is given by 
\begin{equation}
S_{x}=\{\phi _{t_{n}}^{X_{n}}\circ \ldots \circ \phi _{t_{1}}^{X_{1}}\mid
n\in \mathbb{N},\text{ }t_{1},...,t_{n}\in \mathbb{R},\text{ }
X_{1},...,X_{n}\in \mathfrak{X}(S)\}.  \label{Sx}
\end{equation}

\begin{remark}
\label{inclusion_remark}
Let $\mathfrak{X}_{\rm glob}(S)$ be the family of all vector fields on a subcartesian
space $S$. For each $x\in S$, the orbit $S_{x}$ is a manifold and the
inclusion map $S_{x}\hookrightarrow S$ is smooth. For every family $\mathcal{F}$ 
of vector fields on $S$, orbits of $\mathcal{F}$ are contained in orbits
of $\mathfrak{X}_{\rm glob}(S).$ For a proof of this, see Theorem 4 in 
\cite{Sniatycki03}.
Smoothness of the inclusion map $S_{x}\hookrightarrow S$ is discussed in the
proof of Theorem 3 in \cite{Sniatycki03}.
\end{remark}

\subsection{Stratifications}\label{strat}

A \emph{decomposition} of a differential space $S$ is a partition of $S$ by
a locally finite family $\mathcal{D}$ of smooth manifolds $S_{\alpha }$ of $
S $ such that

\begin{enumerate}
\item each manifold $S_{\alpha }\in \mathcal{D}$ with its manifold structure
is a locally closed differential subspace of $S$ 
\end{enumerate}
and
\begin{enumerate}
\setcounter{enumi}{1}
\item for $S_{\alpha },S_{\beta }\in \mathcal{D}$, if $S_{\alpha }\cap \bar{S
}_{\beta }\neq \emptyset$, then either $S_{\alpha }=S_{\beta }$ or $
S_{\alpha }\subset \bar{S}_{\beta }\backslash S_{\beta }.$
\end{enumerate}

\noindent Manifolds $S_{\alpha }\in \mathcal{D}$ are called \emph{strata} of
the decomposition $\mathcal{D}.$

Decompositions of a differential space $S$ can be partially ordered by
inclusion. If $\mathcal{D}^{1}=\{S_{\alpha }^{1}\}$ and $\mathcal{D}
^{2}=\{S_{\beta }^{2}\}$ are two decompositions of $S$, we say that $
\mathcal{D}^{1}$ is a \emph{refinement} of $\mathcal{D}^{2}$, and write
$\mathcal{D}^1\geq\mathcal{D}^2$, 
if, for every $
S_{\alpha }^{1}\in \mathcal{D}^{1}$, there exists $S_{\beta }^{2}\in \mathcal{
D}^{2}$ such that $S_{\alpha }^{1}\subseteq S_{\beta }^{2}$. We say that $
\mathcal{D}$ is a minimal (coarsest) decomposition of $P$ if it is not a
refinement of a different decomposition of $P$. Note that if $P$ is a
manifold, then the minimal decomposition of $M$ consists of a single
manifold $M=P$. Similarly, we say that $\mathcal{D}$ is a maximal (finest)
decomposition of $P$ if  $\mathcal{D}^{\prime }\geq \mathcal{D}$ implies $
\mathcal{D}^{\prime }=\mathcal{D}$.

Let $\mathcal{D}=\{S_{\alpha }\}$ be a decomposition of $S$. The \emph{stratification}
corresponding to $\mathcal{D}$ is a map $\mathcal{S}$ which associates to
each $x\in S$ the germ at $x$ of the stratum $S_{\alpha }$ containing $x$.
If all strata $S_{\alpha }$ of $\mathcal{D}$ are connected, then $\mathcal{D}$
is uniquely determined by the stratification $\mathcal{S}$ corresponding to $
\mathcal{D}$ (see \cite{LuSn08}). In the following we identify
decompositions of $S$ with connected strata with corresponding
stratifications of $S$.

\subsection{Orbits of a proper action}\label{subsection_orbits}

In this section we consider a smooth and proper action 
\begin{equation}
\begin{array}{cccl}
\Phi &:G\times M&\rightarrow& M\\
&(g,m)&\mapsto& \Phi (g,m)\equiv \Phi
_{g}(m)\equiv gm\equiv g\cdot m  \label{action}
\end{array}
\end{equation}
of a Lie group $G$ on a manifold $M$. Our aim is to describe the
differential structure of the orbit space $\bar M=M/G$. We denote the orbit map
by $\pi :M\rightarrow \bar M$.

For each closed Lie subgroup $H$ of $G$ we define the \emph{isotropy type} set
\begin{equation*}
M_{H}=\{m\in M\mid G_{m}=H\},
\end{equation*}
where $G_{m}=\{g\in G\mid gm=m\}$ is the isotropy subgroup of $m\in M$. Since the
action is proper, all isotropy groups are compact. The sets $M_{H}$, where $
H $ ranges over the closed Lie subgroups of $G$ for which $M_{H}$ is
nonempty, form a partition of $M$, and therefore they are the equivalence 
classes of an equivalence relation in $M$.
Define the normalizer of $H$ in $
G $ by
\[
N(H)=\{g\in G\mid gHg^{-1}=H\}. 
\]
$N(H)$ is a closed Lie subgroup of $G$. Since $H$ is a normal subgroup of $
N(H)$ the quotient $N(H)/H$ is a Lie group. If $m\in M_{H}$, we have $G_{m}$ 
$=H$ and, for all $g\in G,$ $G_{gm}=gHg^{-1}$. As a consequence, $gm$ lies in 
$M_{H}$ if and only if $g\in N(H)$. The action of $G$ on $M$ restricts to an
action of $N(H)$ on $M_{H}$, which induces a free and proper action of 
$N(H)/H$ on $M_{H}$.

Define the \emph{orbit type} set
\begin{equation}
M_{(H)}=\{m\in M\mid G_{m}\text{ is conjugated to }H\mathbf{\}.}  \label{P(H)}
\end{equation}
Then, 
\[
M_{(H)}=\{gm\mid g\in G,m\in M_{H}\}=\pi ^{-1}(\pi (M_{H})). 
\]
Connected components of  $M_{H}$ and $M_{(H)}$ are embedded submanifolds of $M$; 
therefore $M_H$ is called an \emph{isotropy type manifold} and $M_{(H)}$ an 
\emph{orbit type manifold}.
Moreover, 
\[
\pi \left(M_{(H)} \right)=\{gm\mid m\in M_{H}\}/G=M_{H}/N(H)=M_{H}/(N(H)/H). 
\]
But the action of $N(H)/H$ on $M_{H}$ is free and proper which implies that \linebreak
$M_{H}/(N(H)/H)$ is a quotient manifold of $M_{H}$. Hence, $\pi (M_{(H)})$ is
a manifold contained in the orbit space $\bar M=M/G$.

Since the action of $G$ on $M$ is proper, the Slice Theorem of \cite{Palais61} 
ensures that for each $m\in M$ there exists a slice $S_{m}$ for
this action and that $\pi (S_{m})$ is an open subset of $\bar M$ homeomorphic
to $S_{m}/G_{m}$. It follows that 
\[
C^{\infty }_{\rm glob}(\bar M)
=\{f\in C^{0}(\bar M)\mid \pi ^{\ast }(f)\in C^{\infty }(M)\} 
\]
is a differential structure on the orbit space $\bar M$; see Theorem 3.4 of 
\cite{CuSn01}. Moreover, for each slice $S_{m}$, its projection 
$\pi (S_{m})$ to $\bar M$ is diffeomorphic to $S_{m}/G_{m}$ in the sense of
differential spaces. Since $G_{m}$ is compact and the action of $G_{m}$ on 
$S_{m}$ is linear, it follows that the space $C^{\infty }_{\rm diff}(S_{m})^{G_{m}}$ of 
$G_{m}$-invariant smooth functions on $S_{m}$ is given by smooth functions of
algebraic invariants (see \cite{Schwarz75}). Hilbert's theorem ensures that the ring
of $G_{m}$-invariant polynomials on $S_{m}$ is finitely generated (\cite{Weyl46}, 
page 274). Hence, $\bar M$ is locally diffeomorphic to a subset of a
finite dimensional space, which implies that $\bar M$ is subcartesian; see 
\cite{Sniatycki02}.

A partition of the orbit space $\bar{M}=M/G$ by connected components of $\pi
(M_{(H)})$ is a decomposition of the differential space $\bar{M}$. The
corresponding stratification of $\bar{M}$ is called the orbit type
stratification of the orbit space (see \cite{DuKo00}, and \cite{Pflaum01}).
It is a minimal stratification in the partial order discussed above (see 
\cite{Bierstone75}). This implies that the  strata $\pi (M_{(H)})$ of the orbit
type stratification are orbits of the family of all vector fields on $\bar{M}
$ (see \cite{LuSn08}).

Now let $C^\infty(\bar M)$ be the sheaf of smooth functions on $\bar M$ defined as
follows. A function $f:V\to \R$ is an element of $C^\infty(\bar M)$ if
$V\subseteq \bar M$ is an open subset and $\pi^*f\in C^\infty(M)$. This
really defines a sheaf of smooth functions on $\bar M$; see \cite{OrRa04} or
\cite{Duistermaat}.
Proposition 4.7 in \cite{Duistermaat} states that this sheaf can equivalently
be constructed as follows: $f:V\to \R$ is an element of $C^\infty(\bar M)$ if
$V\subseteq \bar M$ is an open subset and for all $x\in V$ there exists
$U_{x}\subseteq \bar M$ open, $x\in U_x$, and 
$f_{x}\in C^\infty_{\rm glob}(\bar M)$ such that 
\[f_{x}\an{U_{x}}=f\an{U_x}.\]
In an analogous manner, we define $C^\infty(\bar P)$ for a stratum $\bar P$ of
$\bar M$. A function $f_{\bar P}:V_{\bar P}\to\R$ is an element of
$C^\infty(\bar P)$
if $V_{\bar P}\subseteq \bar P$ is an open subset and for all $x\in V_{\bar
  P}$ there exists an open neighborhood $U\subseteq M$ of $x$ such that 
$U\cap\bar P\subset V_{\bar P}\subseteq\bar P$ and
$f\in C^\infty_{\rm glob}(\bar M)$ such that 
\[ f_{\bar P}\an{U\cap \bar P}=f\an{U\cap \bar P}.\]
Note that this
implies that for  any $f \in C ^{\infty}(\bar{P})$ and any point $x$ in the domain
of definition of $f$, there exists an open neighborhood $U\subseteq \bar M$ of
$x$ and $f_x\in
C^\infty(\bar M)$ such that 
\[f\an{U\cap \bar P}=f_x\an{U\cap\bar P}.\] 
Hence, by shrinking the domain of definition of $f$, we can see the function
$f_x$ as an extension of $f$ at $x$. We shall  often use this
property in the rest of the paper (without mentioning the ``shrinking'' of the
domain of definition). We will see later that this smooth structure on $\bar
P$ is exactly its smooth structure as the quotient of the stratum
$P=\pi^{-1}(\bar P)$ of $M$.

\medskip

We end this subsection with a proposition on the uniqueness of the restriction
of a vector field on $\bar M$ to a stratum of $\bar M$.
\begin{proposition}\label{restriction_of_X}  
Let $\bp$ be a stratum of $\bar M$.
We know by the considerations above that each vector field $\bar X$ on $\bar
M$ restricts to a 
vector field $X_\bp$ on $\bar P$. We write $X_\bp\sim_{\iota_\bp}\bar X$. If
$X_\bp^1$ and 
$X_\bp^2$ are such that $X^1_\bp\sim_{\iota_\bp}\bar X$ and
$X^2_\bp\sim_{\iota_\bp}\bar X$, 
then they have to be equal.
\end{proposition}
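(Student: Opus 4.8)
The plan is to exploit the property recorded just before the statement, namely that every local function on $\bp$ agrees near each of its points with the restriction of a function defined on $\bar M$, together with the locality of derivations. First I would unwind the notation: $X_\bp\sim_{\iota_\bp}\bar X$ means that $X_\bp$ and $\bar X$ are $\iota_\bp$-related, i.e.
\[
X_\bp\bigl(f\an{\bp}\bigr)=(\bar X f)\an{\bp}
\]
for every $f\in C^\infty(\bar M)$, where $\iota_\bp:\bp\hookrightarrow\bar M$ is the inclusion. The two hypotheses $X^1_\bp\sim_{\iota_\bp}\bar X$ and $X^2_\bp\sim_{\iota_\bp}\bar X$ then give at once $X^1_\bp(f\an{\bp})=X^2_\bp(f\an{\bp})$ for every such $f$; that is, $X^1_\bp$ and $X^2_\bp$ already agree on all local functions on $\bp$ obtained by restricting functions on $\bar M$.

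It remains to upgrade this partial agreement to all of $C^\infty(\bp)$, and for this I would establish that a vector field on $\bp$, being a derivation of $C^\infty(\bp)$, is a local operator: if two functions of $C^\infty(\bp)$ coincide on a $\bp$-open neighbourhood of a point $x$, then any derivation assigns them the same value at $x$. This is the standard bump function argument. If $\varphi\in C^\infty(\bp)$ vanishes on a $\bp$-open set $V\ni x$, one chooses $h\in C^\infty(\bp)$ with $h(x)=1$ and support inside $V$ (for instance the restriction to $\bp$ of a bump function in $C^\infty_{\rm glob}(\bar M)$, available since $\bar M$ is subcartesian, or simply a bump function on the manifold $\bp$); then $h\varphi\equiv 0$, and evaluating Leibniz' rule \eqref{Leibniz} for $h\varphi$ at $x$, where $h(x)=1$ and $\varphi(x)=0$, forces $X_\bp(\varphi)(x)=0$.

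With locality in hand I would conclude as follows. Fix $g\in C^\infty(\bp)$ and a point $x$ in its domain of definition. By the property recalled above there are an open neighbourhood $U\subseteq\bar M$ of $x$ and $f_x\in C^\infty(\bar M)$ with $g\an{U\cap\bp}=f_x\an{U\cap\bp}$, so that $g$ and $f_x\an{\bp}$ coincide on the $\bp$-open set $U\cap\bp$ containing $x$. Locality gives $X^i_\bp(g)(x)=X^i_\bp(f_x\an{\bp})(x)$ for $i=1,2$, and therefore
\[
X^1_\bp(g)(x)=X^1_\bp(f_x\an{\bp})(x)=(\bar X f_x)\an{\bp}(x)=X^2_\bp(f_x\an{\bp})(x)=X^2_\bp(g)(x).
\]
As $g$ and $x$ are arbitrary, $X^1_\bp=X^2_\bp$. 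I expect the only genuine obstacle to be the locality of derivations on $\bp$: it is routine once bump functions are available, but worth isolating, since a priori a derivation of a differential structure need not be determined by the germs of the functions it acts on.
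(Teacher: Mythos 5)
Your proposal is correct and follows essentially the same route as the paper: both arguments rest on the fact that every function in $C^\infty(\bp)$ is locally the restriction of a function on $\bar M$, combined with the locality of derivations on the smooth manifold $\bp$. The paper compresses the locality step into the remark that derivations on $\bp$ are exactly vector fields on the manifold $\bp$, whereas you spell it out with the standard bump-function argument; this is a matter of exposition, not of substance.
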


\begin{proof}
If $\bar X\in\mx(\bar M)$ restricts to a global vector field $X_\bp\in\mx(\bp)$, 
we have for all
$\bar f\in
C^\infty_{\rm glob}(\bar M)$:
\[X_\bp(\iota_\bp^*\bar f\,)=\bar X(\bar f)\circ\iota_\bp.
\]
Since each function $f_\bp\in C^\infty(\bp)$ is locally the restriction to
$\bar P$ of  some $\bar f\in C^\infty_{\rm glob}(\bar M)$ and the derivations on $\bp$
correspond exactly to the vector fields on $\bp$ (since $\bar P$ is a smooth
manifold), 
we  automatically get the
uniqueness of $X_\bp$.
\end{proof}

\section{Generalized distributions and orthogonal spaces}\label{dis}

We will need a few standard facts from the theory of generalized distributions
on a smooth manifold 
$ M $ (see \cite{Stefan74a, Stefan74b, Stefan80}, \cite{Sussmann73} for the
original articles and 
\cite{LiMa87}, \cite{Vaisman94}, \cite{Pflaum01}, or \cite{OrRa04},  
for a quick review of this theory). 
\medskip

Let $E$ be a vector bundle over $M$.
A  \emph{generalized subdistribution} $ \Delta $ of $E$ is a subset
$\Delta$ of $E$  such that for each $m\in M$, the set $ \Delta (m) : 
= \Delta \cap E(m)$ is a
vector subspace of $E_m$. The number $\dim \Delta (m) $ is called the
\emph{rank}  of $ \Delta $ at $ m  \in  M $. A point $m\in M$ is a
\emph{regular} point of the distribution $\Delta$ if there exists a
neighborhood $U$ of $m$ such that the rank of $\Delta$ is constant on
$U$. Otherwise, $m$ is a \emph{singular} point of the distribution. 

A local \emph{differentiable
  section} of $ \Delta $ is a smooth section $\sigma\in \Gamma(E)$
defined on some open subset $ U \subset  M $ such that $ \sigma(u) \in  \Delta (u)
$ for each $ u \in  U $. We denote by $ \Gamma(
\Delta ) $ the space
of local sections of $ \Delta $. A generalized
subdistribution is said to be \emph{differentiable} or \emph{smooth} if for every
point $ m \in  M $ and every vector $ v \in  \Delta (m) $, there is a
differentiable section $ \sigma \in  \Gamma ( \Delta ) $ defined on an open
neighborhood $ U $ of $ m $ such that $ \sigma(m) = v $.

A smooth generalized subdistribution of  the tangent space $TM$ 
(that is, with $E=TM$) will
 simply be called a \emph{smooth tangent distribution}; a smooth generalized
subdistribution of the
cotangent space $T^*M$ will
be called a \emph{smooth cotangent distribution}. We will work most of the time 
with smooth generalized
subdistributions of the Pontryagin bundle $E=TM\operp T^*M$, which will
be called \emph{smooth generalized distributions}.   

\begin{example}\label{ex_Dirac_dis}
A Dirac structure $D$ on  a manifold $M$ defines two smooth tangent distributions 
$\mathsf{G}_0, \mathsf{G}_1 \subset TM $ and two smooth cotangent distributions 
$\mathsf{P}_0, \mathsf{P}_1 \subset T^*M$:
\begin{align*}
\mathsf{G}_0(m)&:= \left\{X(m) \in T_mM \left|
X \in \mathfrak{X}(M), (X, 0) \in
\Gamma(D) \right.\right\}, \\
\mathsf{G}_1(m)&:= \left\{X(m) \in T_mM \left|\begin{array}{c}  
X \in \mathfrak{X}(M), \, \text{there exists}\;
\alpha \in \Omega^1(M)\\ \text{such that}\; 
(X, \alpha) \in \Gamma(D)
\end{array}\right.\right\}
\end{align*}
and
\begin{align*}
\mathsf{P}_0(m)&:= \{\alpha(m) \in T^*_mM \mid \alpha \in \Omega^1(M), 
(0, \alpha) \in \Gamma(D) \}, \\
\mathsf{P}_1(m)&:= \left\{\alpha(m) \in T^*_mM \left|
\begin{array}{c}  \alpha \in \Omega^1(M),\, \text{there
exists }\;
X \in\mathfrak{X}(M)\\
 \text{such that}\; (X, \alpha) \in \Gamma(D) 
\end{array}\right.\right\}.
\end{align*}
The smoothness of $\mathsf{G}_0, \mathsf{G}_1, \mathsf{P}_0, \mathsf{P}_1$ is
obvious since, by definition, 
they are generated by smooth local sections. In general, these are not vector
subbundles of $TM$ and 
$T ^\ast M $, respectively. It is also clear that $\mathsf{G}_0 \subset
\mathsf{G}_1$ and 
$\mathsf{P}_0 \subset \mathsf{P}_1$. \end{example}

\subsection{Generalized foliations and integrability of tangent distributions}
To give content to the notion of integrability of a smooth tangent
distribution and elaborate 
on it, we need to quickly review the concept and main properties of generalized 
foliations. 
A \emph{generalized foliation} on $ M $ is a partition $\mathfrak{F} : = 
\{ \mathcal{L} _\alpha \}_{ \alpha \in  A}$ of $ M $  into disjoint connected
sets, called \emph{leaves},  such that each point $ m \in  M $ has a
\emph{generalized foliated chart} $ (U, \varphi : U \rightarrow  V \subseteq
\mathds{R} ^{\dim M}) $, $ m \in  U $. This means that  there is some natural
number $ p_\alpha \leq \dim M $, called the \emph{dimension} of the leaf $
\mathcal{L} _\alpha $, 
and a subset $S _\alpha \subset  \mathds{R} ^{\dim M - p_\alpha } $ such that 
$ \varphi(U \cap \mathcal{L} _\alpha ) = \{(x^1, \ldots ,x^{\dim M} ) \in  V
\mid  
(x^{p_\alpha+1}, \ldots  ,x^{\dim M} ) \in  S _\alpha \} $. The key difference
with 
the concept of foliation is that the number $ p_\alpha $ can change from leaf
to leaf.  
Note that each $ (x^{p_\alpha+1}_ \circ  , \ldots  ,x^{\dim M}_ \circ  ) \in
S _\alpha$ 
determines a connected component $ ( U \cap \mathcal{L}_\alpha )_ \circ $ of
$U \cap \mathcal{L}_\alpha$, 
that is, $ \varphi (( U \cap \mathcal{L}_\alpha )_ \circ ) 
= \{(x^1, \ldots , x^{p_\alpha}, x^{p_\alpha+1}_ \circ  , \ldots  ,x^{\dim M}_
\circ  ) \in V \} $. 
The generalized foliated charts induce on each leaf a smooth manifold
structure that 
makes them into initial submanifolds of $ M $.

A leaf $ \mathcal{L} _\alpha $ is called \emph{regular} if it has an open
neighborhood 
that intersects only leaves whose dimension equals $ \dim \mathcal{L} _\alpha
$. 
If such a neighborhood does not exist, then $ \mathcal{L} _\alpha $ 
is called a \emph{singular} leaf. A point is called \emph{regular}
(\emph{singular}) 
if it is contained in a regular (singular) leaf. The set of vectors tangent to
the 
leaves of  $ \mathfrak{F} $ is defined by
\[
T(M,  \mathfrak{F}): 
= \bigcup _{ \alpha \in  A} 
\bigcup_{m \in  \mathcal{L} _\alpha} T _m \mathcal{L} _\alpha \subset  TM.
\]

\medskip
Let us now turn  to the relationship between distributions and generalized
foliations.
In all that follows, $ \T $ is a smooth tangent distribution. An
\emph{integral manifold} of $ \T $ is an injectively immersed connected
manifold $ \iota _L : L \hookrightarrow M $,  where $ \iota _L $  is the
inclusion, satisfying the condition $ T _m \iota_L (T _m  L )  \subset  \T (m)
$  for every $ m \in  L $. The integral manifold $ L $ is of \emph{maximal
  dimension} at $ m \in  L $ if $ T _m \iota_L (T _m  L )  =  \T (m) $. The
distribution $ \T $ is \emph{completely integrable} if for every $m \in  M $
there is an integral manifold $ L $ of $ \T $, $ m \in  L $, everywhere of
maximal dimension. The distribution $\T$ is \emph{involutive} if it is
invariant under the (local) flows associated to differentiable sections of
$\T$. The distribution $\T$ is \emph{algebraically involutive} if for any two
smooth vector fields defined on an open set of $M $ which take values in $\T$,
their bracket also takes values in $\T$. Clearly involutive distributions are
algebraically involutive and the converse is true if the distribution is a
subbundle. 

Recall that the Frobenius theorem states that a vector subbundle of $ TM $ is  
(algebraically) involutive if and only if it is the tangent bundle of a
foliation on $M $. 
The same is true for distributions: \textit{A smooth distribution is
  involutive 
if and only if it coincides with the set of vectors tangent to a generalized
foliation, 
that is, it is completely integrable.}  This is known as the Stefan-Sussmann Theorem.

We will give the Stefan-Sussmann Theorem in the more general setting of a
smooth tangent distribution spanned by a family of vector fields. Note that 
each smooth tangent distribution is spanned by the family of its smooth sections.

\medskip

Let 
$F$  be an everywhere defined family of local vector fields on $M$.
By {\it everywhere defined} we mean that for every $m \in M$ there
exists
$X \in F$ such that $m \in {\rm Dom}(X) $. 
We can
associate to the flows of the vector fields in $F$ a
set of local
diffeomorphisms $\mathcal{A}_F:=\{\phi _t\mid \,\phi _t \text{ flow of } X \in
F\} $ of $M$ and
a pseudogroup of transformations generated by it,
\[
A _F:=(\mathds I, M)\cup\{\phi _{t _1}^1 \circ \cdots\circ \phi _{t
_n}^n\mid n \in \mathds
N\text{ and  } \phi _{t _n}^n \in \mathcal{A}_F\text{ or } (\phi
_{t _n}^n) ^{-1} \in \mathcal{A}_F\}.
\]
Analogously, we also define, for any $z  \in M $, the following vector
subspaces of $T
_zM $:
\begin{eqnarray*}
\mathcal{D}_F (z)&:=&{\rm span\,}\left\{ \left. \frac{d}{dt}\right\arrowvert_{t=t_0}
\!\!\!\phi _t (y)
\,\Bigr|\, \phi _t\text{ flow of }X
\in F,
\, \phi _{t_0} (y)=z\right\}\\
        &=&\operatorname{span}\{X(z)\in T_zM| \,X\in F \text{ and }
z \in {\rm Dom}(X)\},\\
D _F (z) &:=&{\rm span\,}\{T _y \mathcal{F}_T \cdot \mathcal{D}_F
(y)\mid \mathcal{F}_T
\in A _F,
\mathcal{F}_T(y)=z\}.
\end{eqnarray*}
Note that, by construction, $\mathcal{D}_F$ is a smooth tangent distribution.   We
will say that
$\mathcal{D}_F$ is the smooth tangent distribution \emph{spanned} by
$F$. 

The $A _F $-orbits,
also called the
\emph{accessible sets}
of the family $F$,  form a generalized foliation  whose leaves
have as tangent spaces the values of $D _F$ (see, for example, \cite{OrRa04}). 
An important question is
determining when
the smooth tangent distribution $\mathcal{D}_F $ spanned by $F$ is
integrable.

\begin{theorem}
\label{frobenius with vector fields}
{\rm(\cite{Stefan74a} and~\cite{Sussmann73})}.
Let $\mathcal{D}_F$ be a differentiable generalized distribution on the smooth
manifold $M$ spanned by an everywhere defined
family of vector fields $F$. The following properties are
equivalent:
\begin{enumerate}
\item The distribution $\mathcal{D}_F$ is invariant  under the
pseudogroup of transformations generated by $F$; that is, for each
$\mathcal{F}_T\in
A_{F}$ and for each
$z\in M$ in the domain of $\mathcal{F}_T$,
\[
T_z\mathcal{F}_T(\mathcal{D}_F (z))=\mathcal{D}_F(\mathcal{F}_T(z)).
\]
\item  $\mathcal{D}_F= D _F $.
\item For any $X \in F $ with flow $\phi _t$ and any $x\in {\rm
Dom}(X)$, there
exist:
\begin{enumerate}
\item A finite set $\{X _1, \ldots, X _p\}\!\subset\! F$ such
that $$\mathcal{D}_F
(x)= {\rm  span}\{X _1(x), \ldots, X _p(x)\}.$$
\item A constant $\epsilon>0 $ and Lebesgue integrable functions
$\lambda_{ij}:(-
\epsilon,
\epsilon) \rightarrow \mathds R$ ($1 \leq i,j \leq p$) such that for
every $t \in (-
\epsilon, \epsilon)$ and $j \in \{1, \ldots, p\} $:
\[
[X, X _j](\phi _t(x))=\sum _{i=1}^{p}\lambda_{ij}(t) X _i(\phi _t(x))
\]
and $\mathcal{D}_F
(\phi _t(x))= {\rm  span}\{X _1(\phi _t(x)), \ldots, X _p(\phi _t(x))\} $.
\end{enumerate}
\item The distribution $\mathcal{D}_F$ is integrable and its
maximal integral
manifolds are  the $A_{F}$-orbits.
\end{enumerate}
\end{theorem}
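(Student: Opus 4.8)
The plan is to prove the four conditions equivalent by running the cyclic chain $(1)\Rightarrow(2)\Rightarrow(3)\Rightarrow(4)\Rightarrow(1)$. Before entering the cycle I would record two elementary facts about $D_F$ that make several steps automatic. From the definition of $D_F$, taking $\mathcal{F}_T=\Id$ and $y=z$ gives $\mathcal{D}_F(z)\subseteq D_F(z)$ for every $z\in M$; and $D_F$ is $A_F$-invariant by construction, since for $\mathcal{F}_T'\in A_F$ with $\mathcal{F}_T'(z)=w$ and a generator $T_y\mathcal{F}_T\,\mathcal{D}_F(y)$ of $D_F(z)$ one has $T_z\mathcal{F}_T'\circ T_y\mathcal{F}_T=T_y(\mathcal{F}_T'\circ\mathcal{F}_T)$ with $\mathcal{F}_T'\circ\mathcal{F}_T\in A_F$, so $T_z\mathcal{F}_T'(D_F(z))\subseteq D_F(w)$, with equality upon applying the inverse. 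Granting these, $(1)\Rightarrow(2)$ is immediate: if $\mathcal{D}_F$ is $A_F$-invariant, then each generating subspace $T_y\mathcal{F}_T\,\mathcal{D}_F(y)$ of $D_F(z)$ (with $\mathcal{F}_T(y)=z$) equals $\mathcal{D}_F(z)$, so $D_F(z)=\mathcal{D}_F(z)$.

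For $(2)\Rightarrow(3)$ I would fix $X\in F$ with flow $\phi_t$ and $x\in\dom(X)$. Since $\mathcal{D}_F$ is differentiable and $\mathcal{D}_F(x)$ is finite-dimensional, I can pick $X_1,\dots,X_p\in F$ with $\mathcal{D}_F(x)=\operatorname{span}\{X_1(x),\dots,X_p(x)\}$, which is part (a). Because $(2)$ together with the invariance of $D_F$ makes $\mathcal{D}_F$ itself invariant, the pulled-back curves $c_j(t):=T_{\phi_t(x)}\phi_{-t}\bigl(X_j(\phi_t(x))\bigr)\in T_xM$ all lie in the fixed subspace $W:=\mathcal{D}_F(x)$. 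Differentiating via the identity $\frac{d}{dt}\phi_t^*Y=\phi_t^*\ldr{X}Y$ gives $\dot c_j(t)=T_{\phi_t(x)}\phi_{-t}\bigl([X,X_j](\phi_t(x))\bigr)$, which also lies in $W$ as $W$ is a fixed subspace. For small $t$ the $c_j(t)$ stay a basis of $W$ by lower semicontinuity of the rank (they cannot exceed $\dim W=p$ and span $W$ at $t=0$), so $\operatorname{span}\{X_j(\phi_t(x))\}=\mathcal{D}_F(\phi_t(x))$ and $[X,X_j](\phi_t(x))\in\mathcal{D}_F(\phi_t(x))$; expanding the bracket in this basis produces the $\lambda_{ij}(t)$, and solving the linear system $\dot c_j=\sum_i\lambda_{ij}c_i$ by Cramer's rule shows they are measurable and locally integrable, giving part (b).

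The implication $(3)\Rightarrow(4)$ is where the real work lies. The bracket relation turns $\dot c_j=\sum_i\lambda_{ij}c_i$ into a linear Carath\'eodory ODE on the fixed space $W$, whence $\operatorname{span}\{c_j(t)\}\equiv W$ and thus $T_x\phi_t(\mathcal{D}_F(x))=\mathcal{D}_F(\phi_t(x))$: the distribution is invariant under the flow of every $X\in F$, hence under all of $A_F$. With this invariance I would then construct, near each $x$, the local integral manifold of maximal dimension as the image of $(t_1,\dots,t_p)\mapsto\phi^1_{t_1}\circ\cdots\circ\phi^p_{t_p}(x)$ for finite generators, check it is an immersion whose tangent spaces are $\mathcal{D}_F$, and invoke the orbit theorem to glue these pieces into the leaves of a generalized foliation with tangent distribution $D_F=\mathcal{D}_F$; the $A_F$-orbits are exactly these leaves. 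Finally $(4)\Rightarrow(1)$ closes the cycle: each flow $\phi_t$, being an element of $A_F$, permutes the leaves, and a diffeomorphism preserving a foliation carries its tangent distribution to itself, so $T_z\phi_t(\mathcal{D}_F(z))=\mathcal{D}_F(\phi_t(z))$; since $A_F$ is generated by such flows and their inverses, full $A_F$-invariance follows.

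I expect the main obstacle to be $(3)\Rightarrow(4)$: passing from the infinitesimal, flow-by-flow invariance to the global statement that the accessible sets are immersed initial submanifolds assembling into a generalized foliation with the prescribed tangent spaces. This is the substance of Sussmann's orbit theorem, and it requires controlling how the leaf dimension jumps from point to point, verifying the initial-submanifold property, and handling the merely Lebesgue-integrable structure functions $\lambda_{ij}$ through Carath\'eodory ODE theory rather than the classical smooth Frobenius argument. By contrast, $(1)\Leftrightarrow(2)$ is bookkeeping with the definition of $D_F$, $(2)\Rightarrow(3)$ is the linear ODE computation above, and $(4)\Rightarrow(1)$ uses only that foliation-preserving diffeomorphisms preserve the tangent distribution.
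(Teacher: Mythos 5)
There is no in-paper proof to compare against: the paper states this theorem as a quoted classical result of Stefan and Sussmann and relies on the references (and on the preceding paragraph, which already records the orbit theorem, namely that the $A_F$-orbits form a generalized foliation whose leaves have tangent spaces $D_F$). Measured against the standard arguments in those sources, your cyclic chain is sound. The preliminary observations ($\mathcal{D}_F\subseteq D_F$ and the $A_F$-invariance of $D_F$ by composition in the pseudogroup) are correct and make $(1)\Leftrightarrow(2)$ immediate. Your $(2)\Rightarrow(3)$ computation with $c_j(t)=T_{\phi_t(x)}\phi_{-t}\bigl(X_j(\phi_t(x))\bigr)$ is the classical one; note only that you should choose $X_1,\ldots,X_p$ to be a \emph{basis} of $\mathcal{D}_F(x)$ (not merely a spanning set) for the openness-of-independence argument to give a basis for small $t$, and that the resulting $\lambda_{ij}$ are in fact continuous, hence trivially integrable. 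In $(3)\Rightarrow(4)$ the linear Carath\'eodory argument is correct (uniqueness for linear systems with $L^1$ coefficients forces $c_j(t)$ into the span of the initial data, and invertibility of the fundamental matrix gives equality of spans), but it only yields invariance for $|t|<\epsilon$; you should add the routine patching step (apply $(3)$ at each point $\phi_s(x)$ and cover a compact time interval) to get invariance on the whole flow domain. The remaining content of $(3)\Rightarrow(4)$ — that the accessible sets are initial immersed submanifolds assembling into a generalized foliation with tangent distribution $D_F$ — you delegate to the orbit theorem, exactly as the paper itself does, so this is not circular in context; and $(4)\Rightarrow(1)$ then follows as you say because each flow maps every orbit into itself and the orbits are initial submanifolds, so the restricted flow is a local diffeomorphism of each leaf. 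In short, your proposal is a correct reconstruction of the standard proof, with the two small points above worth making explicit.
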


\medskip

As already mentioned, given an involutive (and hence a completely
integrable) distribution $ \T $, each point $ m \in  M $ belongs to
exactly one connected integral manifold $ \mathcal{L} _m $ that is maximal
relative to inclusion. It turns out that $ \mathcal{L} _m $ is an initial
submanifold and that it is also the \emph{accessible} set of $ m $; that is, $
\mathcal{L} _m $ equals the subset of points in $ M $ that can be reached by
applying to $ m $ a composition of a finite number of  of flows of elements of
$\Gamma (\T)$.  The collection of all maximal integral submanifolds of $
\T $ forms a generalized foliation $ \mathfrak{F} _\T $ such that $
\T = T(M, \mathfrak{F} _\T) $.  Conversely, given a generalized
foliation $ \mathfrak{F} $ on $ M $, the subset $ T(M, \mathfrak{F}) \subset
TM $ is a smooth completely integrable (and hence involutive) distribution
whose collection of maximal integral submanifolds coincides with $
\mathfrak{F} $. These two statements expand the Stefan-Sussmann Theorem cited
above.

\subsection{Generalized smooth subdistributions and annihilators}
Assume in this section that $E$ is a vector bundle on $M$ that is endowed 
with a smooth nondegenerate
symmetric pairing $\pairing_E$.
If $E=TM\operp T^*M$ is the Pontryagin bundle, this pairing
$\pairing_{TM\operp T^*M}$ will always be the symmetric pairing $\pairing$ 
defined in \eqref{pairing}.
If $\Delta \subset E $ is a smooth subdistribution of $E$, its
\emph{smooth} \emph{orthogonal} distribution  is the smooth
generalized subdistribution $\Delta^\perp $ of $E$  defined by 
\begin{align*}
\Delta^ \perp (m): =
 \left\{\tau(m) \left|  
\begin{array}{c}\tau \in
\Gamma(E) \text{ with } m\in \dom(\tau) \text{ is such that  for all }\\
\sigma \in\Gamma(\Delta) \text{  with } m\in\dom(\sigma),\\
\text{ we have }  \left\langle \sigma,\tau
\right\rangle_E = 0 \text{ on } \dom(\tau)\cap\dom(\sigma)\end{array}\right.\right\}.
\end{align*}
Here we have the, in general strict, inclusion $\Delta \subset \Delta^{ \perp\perp } $.
 Note that the smooth orthogonal distribution of a smooth generalized
 subdistribution
 is smooth by construction. If the distribution $\Delta$ is
a vector subbundle of $E$, then its smooth orthogonal distribution is
also a vector subbundle of $E$. Note that the
smooth orthogonal distribution of a smooth generalized subdistribution $\Delta$ of
$E$ is in general different from the \emph{pointwise}
orthogonal distribution of $\Delta$, defined by
\begin{align*}
\Delta^{\perp_{\rm p}} (m): = \{v_m\in E(m)\mid
\pair{v_m,w_m}_E=0 \text{ for all } w_m\in\Delta(m)\},
\end{align*}
where the subscript $p$ stands for ``pointwise''. The pointwise orthogonal
distribution of a smooth generalized subdistribution $\Delta$ is not smooth in
general. The proof of the following
proposition is easy, and we omit it here.
\begin{proposition}\label{point_smooth_annihi}
Let $\Delta$ be a smooth generalized subdistribution of $E$. Then we have
\[\Delta^\perp\subseteq \Delta^{\perp_{\rm p}},
\quad \Delta=\Delta^{\perp_{\rm p}\perp_{\rm p}},
\text{ and } \quad \Delta\subseteq\Delta^{\perp\perp}.\]
If $\Delta$ is itself a vector bundle over $M$, the smooth orthogonal
distribution $\Delta^\perp$ of
$\Delta$ is also a subbundle of $E$, and  we have
$\Delta^\perp=\Delta^{\perp_{\rm p}}$.
\end{proposition}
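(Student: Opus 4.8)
The plan is to prove each of the four assertions in turn, using the nondegeneracy of the pairing $\pairing_E$ as the central tool and pointwise linear algebra where possible.

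First I would establish $\Delta^\perp\subseteq\Delta^{\perp_{\rm p}}$. Let $v_m=\tau(m)$ with $\tau\in\Gamma(\Delta^\perp)$, so that $\pair{\sigma,\tau}_E=0$ on $\dom(\sigma)\cap\dom(\tau)$ for every $\sigma\in\Gamma(\Delta)$. To check $v_m\in\Delta^{\perp_{\rm p}}(m)$, take any $w_m\in\Delta(m)$; since $\Delta$ is smooth, there is a section $\sigma\in\Gamma(\Delta)$ defined near $m$ with $\sigma(m)=w_m$. Evaluating the identity $\pair{\sigma,\tau}_E=0$ at $m$ gives $\pair{w_m,v_m}_E=0$, hence $v_m\in\Delta^{\perp_{\rm p}}(m)$. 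The smoothness of $\Delta$ is exactly what makes this inclusion work.

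Next I would prove $\Delta=\Delta^{\perp_{\rm p}\perp_{\rm p}}$. This is purely pointwise linear algebra: at each fixed $m$, the pairing $\pairing_E$ restricts to a nondegenerate symmetric bilinear form on the finite-dimensional space $E(m)$, and for any subspace $W\subseteq E(m)$ of a finite-dimensional space with a nondegenerate form one has $W=W^{\perp\perp}$ by the standard dimension count $\dim W+\dim W^\perp=\dim E(m)$. Applying this with $W=\Delta(m)$ gives $\Delta(m)=\Delta^{\perp_{\rm p}\perp_{\rm p}}(m)$ for every $m$, which is the claim. The inclusion $\Delta\subseteq\Delta^{\perp\perp}$ is the statement already asserted in the text preceding the proposition; it follows formally, since every $\sigma\in\Gamma(\Delta)$ pairs to zero with every section of $\Delta^\perp$ (by the very definition of $\Delta^\perp$), so $\sigma$ is itself a section witnessing membership in $\Delta^{\perp\perp}$.

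Finally, for the subbundle case, I expect the main obstacle to be the equality $\Delta^\perp=\Delta^{\perp_{\rm p}}$, since the reverse inclusion $\Delta^{\perp_{\rm p}}\subseteq\Delta^\perp$ is precisely where the failure occurs in general and where the subbundle hypothesis must be used. The plan is: given $v_m\in\Delta^{\perp_{\rm p}}(m)$, I must produce a \emph{smooth} section $\tau\in\Gamma(E)$ with $\tau(m)=v_m$ that pairs to zero with \emph{all} local sections of $\Delta$ on the overlap of domains, not merely at $m$. When $\Delta$ is a subbundle, $\Delta^{\perp_{\rm p}}$ is itself a smooth subbundle (by the first assertion of the proposition, whose subbundle part is a standard fact about the orthogonal of a subbundle under a nondegenerate bundle pairing), so I may choose $\tau$ to be a local section of the subbundle $\Delta^{\perp_{\rm p}}$ through $v_m$; such a $\tau$ then pairs to zero with every section of $\Delta$ pointwise everywhere, hence on domain overlaps, giving $v_m\in\Delta^\perp(m)$. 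Combined with the already-proved inclusion $\Delta^\perp\subseteq\Delta^{\perp_{\rm p}}$, this yields equality. The crucial point, and the only place where the general-position failure is avoided, is that the subbundle hypothesis guarantees $\Delta^{\perp_{\rm p}}$ has locally constant rank and hence admits smooth local sections through each of its vectors.
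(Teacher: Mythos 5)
The paper offers no proof of this proposition at all — it says only that ``the proof is easy and we omit it here'' — so there is no argument of the authors' to compare yours against; your write-up is correct and supplies the standard details one would expect them to have had in mind. One small point worth tightening: in the step $\Delta\subseteq\Delta^{\perp\perp}$, a section of the generalized distribution $\Delta^\perp$ is, by the paper's conventions, \emph{any} local section of $E$ taking values in $\Delta^\perp$, not only one of the ``witnessing'' sections $\tau$ appearing in the definition of $\Delta^\perp(m)$; so the parenthetical ``by the very definition of $\Delta^\perp$'' is not quite literal. The fix is immediate from your first inclusion: any section with values in $\Delta^\perp\subseteq\Delta^{\perp_{\rm p}}$ pairs to zero pointwise with any section of $\Delta$, hence identically on domain overlaps. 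Everything else — the pointwise biduality $\Delta=\Delta^{\perp_{\rm p}\perp_{\rm p}}$ via the dimension count $\dim W+\dim W^{\perp}=\dim E(m)$ for a nondegenerate form, and the use of the constant-rank hypothesis to produce a smooth local section of $\Delta^{\perp_{\rm p}}$ through a prescribed vector — is right, and you correctly identify the latter as the only place where the subbundle hypothesis is genuinely needed.
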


We use this to show the following proposition about the smooth annihilator of a sum
of vector subbundles of $E$.
\begin{proposition}\label{prop_intersection}
Let $\Delta_1$ and $\Delta_2$ be smooth subbundles of the vector bundle
$(E,\langle,\rangle)$.
Since $\Delta_1$ and $\Delta_2$ have constant ranks on $M$, their smooth
orthogonal spaces $\Delta_1^\perp$ 
and $\Delta_2^\perp$ are also smooth subbundles of $E$ and equal to the
pointwise orthogonals of $\Delta_1$ and $\Delta_2$. 
The following are equivalent:
\begin{enumerate}
\item The
intersection
$\Delta_1^\perp\cap\Delta_2^\perp$ is smooth.
\item $
(\Delta_1+\Delta_2)^\perp=\Delta_1^\perp\cap\Delta_2^\perp$.
\item
$(\Delta_1^\perp\cap\Delta_2^\perp)^\perp=\Delta_1+\Delta_2$.
\item $\Delta_1^\perp\cap\Delta_2^\perp$ has constant rank on $M$.
\end{enumerate}
\end{proposition}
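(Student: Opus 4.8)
The plan is to prove $(1)\Leftrightarrow(4)$ and $(1)\Leftrightarrow(2)$ first, then $(4)\Rightarrow(3)$, and finally to close the loop with $(3)\Rightarrow(4)$, which I expect to be the delicate step. Throughout I use Proposition \ref{point_smooth_annihi} together with the pointwise identity $\Delta_1^\perp(m)\cap\Delta_2^\perp(m)=\Delta_1^{\perp_{\rm p}}(m)\cap\Delta_2^{\perp_{\rm p}}(m)=(\Delta_1(m)+\Delta_2(m))^{\perp_{\rm p}}$, which holds because $\Delta_1,\Delta_2$ are subbundles; in particular $\rk(\Delta_1^\perp\cap\Delta_2^\perp)(m)=\dim E_m-\rk(\Delta_1+\Delta_2)(m)$ at every $m$. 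For $(1)\Leftrightarrow(4)$ I would realize $N:=\Delta_1^\perp\cap\Delta_2^\perp$ pointwise as the kernel of the smooth bundle morphism $\Phi\colon E\to \Delta_1^{\,*}\oplus\Delta_2^{\,*}$ sending $v_m$ to the pair of restrictions $\langle v_m,\cdot\rangle|_{\Delta_1(m)}$ and $\langle v_m,\cdot\rangle|_{\Delta_2(m)}$; by nondegeneracy $\rk\Phi(m)=\rk(\Delta_1+\Delta_2)(m)$. Now $\rk(\Delta_1+\Delta_2)$ is lower semicontinuous, so $\rk N$ is upper semicontinuous, whereas if $N$ is smooth its rank is lower semicontinuous; hence $(1)$ forces constant rank, which is $(4)$. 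Conversely, under $(4)$ the morphism $\Phi$ has locally constant rank, so its kernel $N$ is a subbundle and in particular smooth, giving $(1)$.

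For $(1)\Leftrightarrow(2)$, note that $\Delta_i\subseteq\Delta_1+\Delta_2$ and that the smooth orthogonal is order-reversing, so $(\Delta_1+\Delta_2)^\perp\subseteq\Delta_1^\perp\cap\Delta_2^\perp$ always holds; since the left-hand side is smooth by construction, $(2)$ immediately yields $(1)$. Conversely, if $N$ is smooth, then every vector of a fibre $N(m)$ equals $\tau(m)$ for a local section $\tau$ with $\tau(m')\in(\Delta_1(m')+\Delta_2(m'))^{\perp_{\rm p}}$ for all $m'$; such a $\tau$ is pointwise orthogonal to every local section of $\Delta_1+\Delta_2$, hence $\tau(m)\in(\Delta_1+\Delta_2)^\perp(m)$. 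This supplies the reverse inclusion and establishes $(2)$.

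For $(4)\Rightarrow(3)$ I first record the always-valid inclusion $\Delta_1+\Delta_2\subseteq N^\perp$: from $N\subseteq\Delta_i^\perp$ and order-reversal, together with the reflexivity $\Delta_i^{\perp\perp}=\Delta_i$ of the subbundles $\Delta_i$ (Proposition \ref{point_smooth_annihi}), one gets $\Delta_i=\Delta_i^{\perp\perp}\subseteq N^\perp$, whence $\Delta_1+\Delta_2\subseteq N^\perp$. Under $(4)$, $N$ is a subbundle, so Proposition \ref{point_smooth_annihi} gives $N^\perp=N^{\perp_{\rm p}}$; and $\Delta_1+\Delta_2$ is then a subbundle of complementary rank, so pointwise $N^{\perp_{\rm p}}(m)=\bigl((\Delta_1+\Delta_2)(m)^{\perp_{\rm p}}\bigr)^{\perp_{\rm p}}=(\Delta_1+\Delta_2)(m)$. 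This furnishes the reverse inclusion and proves $(3)$.

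The remaining implication $(3)\Rightarrow(4)$ is the main obstacle, and it is genuinely more delicate than the preceding ones: from the bare equality $N^\perp=\Delta_1+\Delta_2$ one must extract constant rank. I would argue by comparing ranks on the open dense set of regular points of $\Delta_1+\Delta_2$, where both $\Delta_1+\Delta_2$ and $N$ are subbundles and $N^\perp$ agrees with $\Delta_1+\Delta_2$ at its generic rank; the aim is to show that the smooth distribution $N^\perp$ cannot drop below this generic rank at a singular point, so that $(3)$, which demands $\rk N^\perp=\rk(\Delta_1+\Delta_2)$ at every point, forces $\rk(\Delta_1+\Delta_2)$ up to its generic value everywhere. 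The crux is exactly that smooth and pointwise orthogonals diverge at singular points, so the rank of the smooth orthogonal $N^\perp$ there is governed by which pointwise-orthogonal vectors extend to nearby smooth sections; this cannot be settled by a purely fibrewise computation and must instead be controlled through semicontinuity of ranks of smooth generalized distributions combined with the density of the regular set.
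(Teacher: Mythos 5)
Your implications $(1)\Leftrightarrow(2)$, $(1)\Leftrightarrow(4)$ and $(4)\Rightarrow(3)$ are all correct. The first and third essentially coincide with the paper's arguments. Your route to $(4)$ — realizing $\Delta_1^\perp\cap\Delta_2^\perp$ fibrewise as the kernel of the bundle morphism $E\to\Delta_1^\ast\oplus\Delta_2^\ast$ and playing upper semicontinuity of its corank against lower semicontinuity of the rank of a smooth distribution — differs from the paper, which instead deduces $(4)$ from $(2)$ by observing that $(\Delta_1+\Delta_2)^{\perp_{\rm p}}=(\Delta_1+\Delta_2)^{\perp}$ forces constant rank; your version is equally valid and arguably more transparent, since it also delivers $(4)\Rightarrow(1)$ for free via the constant-rank kernel.

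The genuine gap is the return from $(3)$. You isolate $(3)\Rightarrow(4)$ as the delicate step and then only describe a strategy — comparing ranks on the dense set of regular points of $\Delta_1+\Delta_2$ and hoping semicontinuity controls the rank of the smooth orthogonal at singular points — without carrying it out; as you yourself observe, at a singular point the rank of a smooth orthogonal is governed by which pointwise-orthogonal vectors extend to sections, and semicontinuity only gives inequalities in the unhelpful direction. As written you therefore have $(1)\Leftrightarrow(2)\Leftrightarrow(4)\Rightarrow(3)$ but no implication out of $(3)$, so the equivalence is not established. The missing step is short if you aim at $(2)$ rather than $(4)$, which is how the paper closes the loop: assuming $(3)$, take smooth orthogonals of both sides to get $\left(\Delta_1^\perp\cap\Delta_2^\perp\right)^{\perp\perp}=(\Delta_1+\Delta_2)^\perp$, invoke the inclusion $\Delta\subseteq\Delta^{\perp\perp}$ of Proposition \ref{point_smooth_annihi} with $\Delta=\Delta_1^\perp\cap\Delta_2^\perp$ to obtain $\Delta_1^\perp\cap\Delta_2^\perp\subseteq(\Delta_1+\Delta_2)^\perp$, and combine this with the standing reverse inclusion $(\Delta_1+\Delta_2)^\perp\subseteq\Delta_1^\perp\cap\Delta_2^\perp$ that you already established. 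This two-line argument replaces your unfinished semicontinuity analysis entirely.
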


\begin{proof}
Let  $\sigma \in\Gamma\left((\Delta_1+\Delta_2)^\perp\right)$. Then for
all $\sigma_1\in\Gamma(\Delta_1)$ and $\sigma_2\in\Gamma(\Delta_2)$,
we have $\langle \sigma,\sigma_1+\sigma_2\rangle=0 $ on the common domain of
definition of the three sections. Applying this to
$\sigma_1\in\Gamma(\Delta_1)$ and $\sigma_2=0$ (respectively
$\sigma_2\in\Gamma(\Delta_2)$ and $\sigma_1=0$),
we get $\sigma \in\Gamma(\Delta_1^\perp)$ (respectively
$\sigma\in\Gamma(\Delta_2^\perp)$).
Hence, we have shown that the inclusion $(\Delta_1+\Delta_2)^\perp\subseteq
\Delta_1^\perp\cap\Delta_2^\perp$ is always true.

\medskip

Using this, we show  first that if  $\Delta_1^\perp\cap\Delta_2^\perp$ 
is smooth, we have 
\begin{equation}\label{intersmooth}
(\Delta_1+\Delta_2)^\perp=\Delta_1^\perp\cap\Delta_2^\perp.
\end{equation}

We have only to show the inclusion
$(\Delta_1+\Delta_2)^\perp\supseteq\Delta_1^\perp\cap\Delta_2^\perp$.
Choose
$e_m\in(\Delta_1^\perp\cap\Delta_2^\perp)(m)$. Since the intersection
$\Delta_1^\perp\cap\Delta_2^\perp$ is smooth, there exists a section
$\sigma\in\Gamma\left(\Delta_1^\perp\cap\Delta_2^\perp\right)$ with
$\sigma(m)=e_m$.
Let $\sigma_1\in\Gamma(\Delta_1)$  and $\sigma_2\in\Gamma(\Delta_2)$. Since
$\sigma \in\Gamma\left(\Delta_1^\perp\cap\Delta_2^\perp\right)$, we have
$\langle \sigma,\sigma_1\rangle=\langle \sigma,\sigma_2\rangle=0$, and  hence 
$\langle \sigma,\sigma_1+\sigma_2\rangle=0$. From this it follows that 
$\sigma\in\Gamma\left((\Delta_1+\Delta_2)^\perp\right)$ and hence
$e_m\in(\Delta_1+\Delta_2)^\perp(m)$.

Conversely, if the equality in \eqref{intersmooth} holds, the intersection
$\Delta_1^\perp\cap\Delta_2^\perp$ is the smooth annihilator of
$\Delta_1+\Delta_2$ and  is thus smooth by definition.
Hence, we have shown ``$(1)\Leftrightarrow (2)$''.

\medskip

If \eqref{intersmooth} holds, we have
\begin{equation}\label{eq67}
(\Delta_1+\Delta_2)^{\perp_{\rm p}}(m)
=(\Delta_1(m)+\Delta_2(m))^\perp=\Delta_1(m)^\perp\cap\Delta_2(m)^\perp
= (\Delta_1^\perp\cap\Delta_2^\perp)(m),
\end{equation}
and hence, using  Proposition \ref{point_smooth_annihi}:
\[(\Delta_1^\perp\cap\Delta_2^\perp)^\perp=
\left((\Delta_1+\Delta_2)^{\perp_{\rm p}}\right)^\perp\subseteq
(\Delta_1+\Delta_2)^{\perp_{\rm p}\perp_{\rm p}}=\Delta_1+\Delta_2.\]
The converse inclusion follows also from Proposition  \ref{point_smooth_annihi}:
\[\Delta_1+\Delta_2\subseteq
(\Delta_1+\Delta_2)^{\perp\perp}=(\Delta_1^\perp\cap\Delta_2^\perp)^\perp.\]

Conversely, the equality
$\Delta_1+\Delta_2=(\Delta_1^\perp\cap\Delta_2^\perp)^\perp$
implies that $\Delta_1^\perp\cap\Delta_2^\perp$ $\subseteq
(\Delta_1^\perp\cap\Delta_2^\perp)^{\perp\perp}=(\Delta_1+\Delta_2)^\perp$
with Proposition  \ref{point_smooth_annihi}, and 
we have shown the converse implication at the beginning of this proof.
This shows ``$(2)\Leftrightarrow (3)$''.

\medskip

Assume again that \eqref{intersmooth} holds. Then it implies
\eqref{eq67} as above.
The equalities \eqref{eq67} and  \eqref{intersmooth}  then yield together:
\[(\Delta_1+\Delta_2)^{\perp_{\rm
    p}}\overset{\eqref{eq67}}
=\Delta_1^\perp\cap\Delta_2^\perp\overset{\eqref{intersmooth}}
=(\Delta_1+\Delta_2)^{\perp}.\]

 But this is only possible if $\Delta_1+\Delta_2$ has constant rank
on $M$, which yields, using \eqref{intersmooth},
the fact that $\Delta_1^\perp\cap\Delta_2^\perp$ has constant rank on $M$,
too. Hence, we have proved the implication ``$(2)\Rightarrow (4)$''.

\medskip

To finish the proof, we see that the implication ``$(4)\Rightarrow (3)$''
is easy. If $\Delta_1^\perp\cap\Delta_2^\perp$ has constant rank on $M$, then
its smooth annihilator is equal to its pointwise annihilator and we get
\[\left(\Delta_1^\perp\cap\Delta_2^\perp\right)^\perp
=\Delta_1^{\perp\perp}+\Delta_2^{\perp\perp}
=\Delta_1+\Delta_2
\]
since $\Delta_1$ and $\Delta_2$ have constant rank on $M$.
\end{proof}

A tangent (respectively cotangent) distribution $\T\subseteq TM$ (respectively
$\mathcal{C}\subseteq T^*M$) can be identified with
the smooth generalized distribution $\T\operp\{0\}$ 
(respectively $\{0\}\operp\mathcal{C}$). 
The smooth orthogonal distribution of $\T\operp\{0\}$ in $TM\operp T^*M$ is 
easily computed to be
$(\T\operp\{0\})^\perp=TM\operp \T^\circ$,
where
\[\T^\circ(m)=\left\{\alpha(m)\left|
\begin{array}{c}
 \alpha\in\Omega^1(M), m\in\dom(\alpha) \text{
  and } 
\alpha(X)=0\\ \text{ on }\dom(\alpha)\cap\dom(X)\text{ for all }
X\in\Gamma(\T)
\end{array}
\right.\!\!\right\}
\]
for all $m\in M$. This smooth cotangent distribution will be called the
\emph{smooth annihilator} of $\T$. Analogously, we define the smooth annihilator
$\C^\circ$ of a
cotangent distribution $\C$. Then $\C^\circ$ is a smooth tangent distribution
and we have $(\{0\}\operp\C)^\perp=\C^\circ\operp T^*M$. 
The pointwise annihilator of a smooth tangent distribution $\T$ (respectively
of a smooth cotangent distribution $\C$), will be written 
$\T^{\ann}$ (respectively $\C^{\ann}$), and is such that 
$(\T\operp\{0\})^{\perp_{\rm p}}=TM\operp \T^{\ann}$ 
(respectively $(\{0\}\operp\C)^{\perp_{\rm p}}=\C^{\ann}\operp T^*M$).
We get as in Proposition \ref{point_smooth_annihi}:
\[\T^\circ\subseteq \T^{\ann},\quad  \T=\T^{\ann\ann}, \quad \text{ and }
\T\subseteq \T^{\circ\circ},\]
and analogously for $\C$. If $\T$ is a smooth subbundle of $TM$, then
$\T^\circ=\T^{\ann}$ is also a smooth subbundle of $T^*M$.

\medskip
  
The tangent distribution $\V$ spanned by the fundamental vector fields of the
action of a Lie group $G$ on a manifold $M$ will be of great importance later on. 
At every point $m \in M $ it is defined by
\[\V(m)=\{\xi_M(m)\mid \xi\in\lie g\}.\]
If the action is not free, the rank of the fibers of $\V$
can vary on $M$. The smooth annihilator  $\V^\circ$ of $\V$ is given by
\[\V^\circ(m)=\left\{\alpha(m)\left|\begin{array}{c}
 \alpha\in\Omega^1(M), \,m\in\dom(\alpha),\\\text{
  such that } \alpha(\xi_M)
=0 \text{ for
  all }\xi \in \lie g
\end{array}
\right.\!\!\right\}.
\]
We will also use the smooth generalized distribution $\K:=\V\operp \{0\}$ and its
smooth orthogonal space $\K^\perp=TM\operp\V^\circ$.

\section{Proper actions and orbit type manifolds}\label{sec:proper}
\subsection{Tube Theorem and $G$-invariant average}\label{tubeth}
If the action of the Lie group $G$ on $M$ is proper, we can find for each
point $m\in M$ a $G$-invariant neighborhood of $m$ such that the action can be
described easily on this neighborhood. The proof of the following theorem can
be found, for example,  in \cite{OrRa04}.
\begin{theorem}[Tube Theorem]
Let $M$ be a manifold and $G$ a Lie group
acting properly on $M$. For a given point $m\in M$ denote $H := G_m$. 
Then there exists a 
$G $-invariant open neighborhood $U$ of the orbit $G\cdot m$, called 
\emph{the tube at $m$}, and a $G $-equivariant diffeomorphism
$G \times_H B \stackrel{\sim}\longrightarrow  U$. The set $B$ is an open 
$H$-invariant neighborhood of $0$
in an $H $-representation space $H$-equivariantly isomorphic to $T_mM/T_m(G\cdot m)$. 
The $H $-representation on $T_mM/T_m(G\cdot m)$ is given by 
$h\cdot (v + T_m(G \cdot m)) := T_m\Phi_h\cdot v + T_m(G
\cdot m)$, $h \in H $, $v \in T_mM $. The smooth manifold  
$ G \times_H B$ is the quotient of the smooth 
free and proper (twisted)
action $\Psi$ of $H$ on $G\times B$ given by $\Psi(h,(g,b)):=(g
h^{-1},h\cdot b)$, $g \in G $, $h \in H $, $b \in B$. 
The $G $-action on $G \times _H B $ is given by $k\cdot [g, b]: = [kg, b]_H $, 
where $k, g \in G $, $b \in B $, and $[g, b]_H \in G \times _H B $ is the
equivalence class 
{\rm (}i.e., $H $-orbit{\rm )} of $(g,b)$.
\end{theorem}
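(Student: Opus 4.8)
The plan is to realize the tube as the image of the normal directions to the orbit under a Riemannian exponential map associated with a $G$-invariant metric, whose existence is guaranteed by properness. First I would fix a $G$-invariant Riemannian metric on $M$; such a metric exists for every proper action on a paracompact manifold (this is the averaging construction also used later in Section \ref{sec:proper}). Since the action is proper, $H=G_m$ is compact and the orbit $G\cdot m$ is a closed embedded submanifold, with tangent space $T_m(G\cdot m)=\{\xi_M(m)\mid \xi\in\lie g\}$ at $m$.

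Next I would split the tangent space at $m$. Using the invariant metric, set $N_m:=T_m(G\cdot m)^\perp\subset T_mM$. Because $H$ fixes $m$ and acts by isometries, it preserves $T_m(G\cdot m)$ and hence also $N_m$; the quotient projection then identifies $N_m$ $H$-equivariantly with $T_mM/T_m(G\cdot m)$, carrying exactly the isotropy representation $h\cdot(v+T_m(G\cdot m)):=T_m\Phi_h v+T_m(G\cdot m)$ from the statement. This $N_m$ (or any $H$-representation space isomorphic to it) is the space in which the ball $B$ lives.

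I would then build the slice from the exponential map. The $G$-invariance of the metric makes its exponential equivariant, $g\cdot\exp_m(v)=\exp_{gm}(T_m\Phi_g v)$; in particular $\exp_m(h\cdot v)=h\cdot\exp_m(v)$ for $h\in H$. Choosing an open metric ball $B\subset N_m$ about $0$ — automatically $H$-invariant, since the inner product on $N_m$ is $H$-invariant — small enough that $\exp_m|_B$ is an embedding, yields an $H$-invariant slice $S:=\exp_m(B)$ transverse to the orbit. The twisted product $G\times_H B$ is the quotient of the free and proper action $\Psi(h,(g,b))=(gh^{-1},h\cdot b)$, and I define the tube map $\tau([g,b]_H):=g\cdot\exp_m(b)$. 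It is well defined and $G$-equivariant by the equivariance of $\exp$ together with the $H$-invariance of $B$. At the base point $[e,0]_H$ the differential of $\tau$ has image $T_m(G\cdot m)+N_m=T_mM$, so $\tau$ is a local diffeomorphism there, and by $G$-equivariance it is a local diffeomorphism everywhere, with $G$-invariant open image $U:=G\cdot S$.

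The main obstacle, and the only essential further use of properness, is the injectivity of $\tau$, i.e. the slice property: for $B$ small enough, $g\cdot\exp_m(b)=\exp_m(b')$ with $b,b'\in B$ forces $g\in H$ and $b'=h\cdot b$. The difficulty is to shrink $B$ \emph{uniformly}; I would argue by contradiction. If no admissible $B$ existed, one would obtain sequences $b_k,b_k'\to 0$ and $g_k\notin H$ with $g_k\exp_m(b_k)=\exp_m(b_k')$. Properness forces $(g_k)$ to subconverge, $g_k\to g_\infty$, and passing to the limit gives $g_\infty m=m$, so $g_\infty\in H$; composing with $g_\infty^{-1}$ reduces to the case $g_k\to e$. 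Then, since $\exp_m$ is a diffeomorphism onto $S$ near $m$ and $S$ is $H$-invariant, a local analysis near $m$ shows the $g_k$ must already lie in $H$ for large $k$, the desired contradiction. Granting the slice property, $\tau$ is an injective local diffeomorphism, hence a diffeomorphism $G\times_H B\stackrel{\sim}\longrightarrow U$ onto the open $G$-invariant neighborhood $U$ of $G\cdot m$, completing the proof.
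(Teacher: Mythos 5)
The paper does not actually prove this theorem: it cites \cite{Palais61} and \cite{OrRa04}, and the proof given there for proper actions is precisely the Riemannian argument you outline. So your route coincides with the standard, cited one: average to get a $G$-invariant metric, identify $N_m=T_m(G\cdot m)^\perp$ $H$-equivariantly with the isotropy representation on $T_mM/T_m(G\cdot m)$, set $S=\exp_m(B)$ for a small $H$-invariant metric ball $B\subset N_m$, define $\tau([g,b]_H)=g\cdot\exp_m(b)$, check it is a well-defined $G$-equivariant local diffeomorphism, and reduce injectivity to a sequential argument via properness. All of this is sound.

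The one step you should tighten is the end of the injectivity argument. After reducing to $g_k\to e$, $b_k,b_k'\to 0$ with $g_k\exp_m(b_k)=\exp_m(b_k')$ and $g_k\notin H$, you appeal to ``$\exp_m$ is a diffeomorphism onto $S$ near $m$ and $S$ is $H$-invariant'' plus ``a local analysis.'' Those two facts alone do not give the conclusion: the embedding property of $\exp_m|_B$ says nothing about how group elements near $e$ move $S$; what is needed is the transversality of $S$ to the orbit directions, i.e.\ exactly the local injectivity of $\tau$ at $[e,0]_H$ that you already obtained from the differential computation. The clean finish is: $[g_k,b_k]_H\to[e,0]_H$ and $[e,b_k']_H\to[e,0]_H$, and $\tau$ is injective on a neighborhood of $[e,0]_H$ because it is a local diffeomorphism there; hence $[g_k,b_k]_H=[e,b_k']_H$ for large $k$, which by definition of the twisted quotient forces $g_k\in H$, the desired contradiction. (Equivalently, split $\lie g=\lie h\oplus\lie m$ with an $\operatorname{Ad}(H)$-invariant complement and use that $(\xi,b)\mapsto\exp(\xi)\exp_m(b)$ is a local diffeomorphism at $(0,0)$.) A second, smaller gloss: ``$\tau$ is a local diffeomorphism everywhere by $G$-equivariance'' requires one further shrink of $B$, since the locus where $T\tau$ is invertible is a $G$-invariant open set of the form $G\times_H W$ with $0\in W\subseteq B$, and transversality of $S$ to the orbit of $m$ alone does not give transversality to nearby orbits. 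Both points are routine to repair with the ingredients you already have.
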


\emph{$G$-invariant average.}
Let $m\in M$ and $H:=G_m$. If the action of $G$ on $M$ is proper, then the isotropy
subgroup $H$ of $m$ is a compact Lie subgroup of $G$. Hence, there exists a
Haar measure $dh $ on $H$, that is, a $G$-invariant measure on $H$ satisfying
$\int_Hdh=1$ (see, for example, \cite{DuKo00}). Here the left $G$-invariance of
$dh$ is equivalent to the right $G$-invariance of $dh$, and we have
$R_{h'}^*dh=dh=L_{h'}^*dh$ for all $h'\in H$, where $L_h:H\to H$ (respectively
$R_h:H\to H$) denotes left (respectively right) translation by $h$ on $H$.  

Let $X\in\mx(M)$ be defined on the tube $U$ at $m\in M$ of the proper action 
of the Lie
group $G$ on $M$. Using the Tube Theorem, we write  the points of $U$ as
equivalence classes $[g,b]_H$ with $g\in G$ and $b\in B$.
Note that for all $h\in H$, we have
$[g,b]_H=[gh^{-1},hb]_H$. Furthermore, the action of $G$ on $U$ is given by
$\Phi_{g'}([g,b]_H)=[g'g,b]_H$. Define the vector field $X_G$ by the following:
\begin{align*}
X_G([g,b]_H)=\left(\Phi_{g^{-1}}^*\left(\int_H\Phi_h^*Xdh\right)\right)([g,b]_H);
\end{align*}
that is, for each point $m'=[g,b]_H\in U$ we have
\begin{align*}
X_G([g,b]_H)=T_{[e,b]_H}\Phi_g
\left(\int_H\left(T_{[h,b]_H}\Phi_{h^{-1}}X([h,b]_H)\right)dh\right).
\end{align*}
We have to show that this definition doesn't depend on the choice of the
representative $[g,b]_H$ for the point $m'$. 
Write $m'=[gh^{-1},hb]_H$ with some $h\in H$, and compute
\begin{align*}
&X_G([gh^{-1},hb]_H)\\
=&\ T_{[e,hb]_H}\Phi_{gh^{-1}}\left(\int_H\left(T_{[\tilde
      h,hb]_H}\Phi_{\tilde h^{-1}}X([\tilde h,hb]_H)\right)d\tilde h\right)\\
=&\ T_{[h^{-1},hb]_H}\Phi_{g}\circ T_{[e,hb]_H}\Phi_{h^{-1}}
\left(\int_H\left(T_{[e,\tilde hhb]_H}\Phi_{\tilde
      h^{-1}}X([e,\tilde hhb]_H)\right)d\tilde h\right)\\
=&\ T_{[e,b]_H}\Phi_{g}\left(\int_H\left(T_{[e,\tilde hhb]_H}\Phi_{h^{-1}\tilde
      h^{-1}}X([e,\tilde hhb]_H)\right)d\tilde h\right)\\
=&\ T_{[e,b]_H}\Phi_{g}\left(\int_H\left(T_{[e,\tilde hhb]_H}\Phi_{(\tilde
      hh)^{-1}}X([e,\tilde hhb]_H)\right)R_h^*d\tilde h\right)\\
\overset{h':=\tilde hh}=&T_{[e,b]_H}\Phi_{g}
\left(\int_H\left(T_{[e,h'b]_H}\Phi_{
      {h'}^{-1}}X([e,h'b]_H)\right)dh'\right)\\
=&\ X_G([g,b]_H),
\end{align*}
where we have used the equality $d\tilde h=R_h^*d\tilde h$. 
The vector field  $X_G$ is an element of $\mx(M)^G$: letting $[g,b]_H\in U$ and
$g'\in G$, we have
\begin{align*}
(\Phi_{g'}^*X_G)([g,b]_H)&=T_{[g'g,b]_H}\Phi_{{g'}^{-1}}X_G([g'g,b]_H)\\
&=T_{[g'g,b]_H}\Phi_{{g'}^{-1}}\circ T_{[e,b]_H}\Phi_{g'g}
\left(\int_H\left(T_{[h,b]_H}\Phi_{
      h^{-1}}X([h,b]_H)\right)dh\right)\\
&=T_{[e,b]_H}\Phi_{g}
\left(\int_H\left(T_{[h,b]_H}\Phi_{
      h^{-1}}X([h,b]_H)\right)dh\right)\\
&=X_G([g,b]_H).
\end{align*}
At last, we should show that $X_G$ is smooth. Let $X^H: =\int_H\Phi_h^*Xdh$ be
the averaged vector 
field which is clearly smooth on $U\simeq G\times_H B$. Let 
$\Psi:H\times(G\times B)\to G\times
B$ be the (smooth free and proper) twisted action of $H$ on $G\times B$, that
is, $\Psi(h(g,b))=\Psi_h(g,b)=(gh^{-1},hb)$ for all $g\in G$, $b\in B$, 
$h\in H$, and let $\pi_H:G\times B\to
G\times_H B\simeq U$ be the projection. We write $\Phi:G\times(G\times B)\to
G\times B$ for the left action of $G$ on $G\times B$, given by $g\cdot
(g',b)=(gg',b)$. Note that $\pi_H$ is $G$-equivariant.
Let $\widetilde{X^H}$ be an $H$-invariant vector field on $G\times B$ such that
$\widetilde{X^H}\sim_{\pi_H} X^H$. Since $\widetilde{X^H}\in\mx(G\times B)$,
it can be written as a 
sum $\widetilde{X^H}=X^{G}+X^{B}$ with
$X^{G}\in\Gamma(TG\times 0_B)$ and $X^{B}\in\Gamma(0_G\times TB)$.
Since $X^G$ is smooth, $X^G\an{\{e\}\times B}$ is also smooth, and there
exists a smooth function 
$\xi:B\to
\lie g$ such that $X^G(e,b)=(\xi(b),0)\in\lie g\times 0_b$ for all $b\in
B$. Let $\phi^B_t$ be the flow of $X^B$. The points $\phi^B_t(e,b)$ are elements of
$\{e\}\times B$ for each $t$ where $\phi^B_t(e,b)$ is defined.
Define $Y\in\mx(G\times B)$ by 
\begin{align*}
Y(g,b)&:=T_{(e,b)}\Phi_g\widetilde{X^H}(e,b)
=T_{(e,b)}\Phi_gX^G(e,b)+T_{(e,b)}\Phi_gX^B(e,b)\\
&=:Y^G(g,b)+Y^B(g,b).
\end{align*}
The vector fields $Y^G$ and $Y^B$ have $\phi^G_t(g,b)=\Phi_{g\exp(t\xi(b))}(e,b)$
and $\phi^B_t(g,b)=\Phi_g\circ \phi^B_t(e,b)$ as flows, which are obviously
smooth. Hence the two vector fields $Y^G$ and $Y^B$ are smooth and so is
$Y$. It is easy to see, using the fact that
$\Psi_h\circ\Phi_g=\Phi_g\circ\Psi_h$ for all $g\in G$ and $h\in H$, that the
vector field $Y$ remains $H$-invariant and hence descends to $G\times_H
B$. The construction of $Y$ and the $G$-equivariance of $\pi_H$, yield that
$Y\sim_{\pi_H}X_G$. This automatically implies that $X_G$ is smooth.
We call $X_G$ the \emph{$G$-invariant average\/} of the vector field
$X$. Note that $X_G$ is, in general, not equal to $X$ (at any point); it can
even vanish. Indeed, we will see in the following that $G$-invariant vector fields
are tangent to the orbit type manifolds (in reality, they are even tangent to the
isotropy type manifolds; see \cite{OrRa04}). Hence, if we choose a $G$-invariant 
Riemannian 
metric on $M$
and a section $X$ of the ($G$-invariant) orthogonal $TP^\perp\subseteq TM\an{P}$ 
of $TP$
relative to this metric, where $P$ is a stratum of $M$, its $G$-invariant
average will remain a section of $TP^\perp$, but will also be tangent to
$P$. Hence, it will be the zero section. For an analogous statement, see
\cite{CuSn01}, Lemma 2.4. 

In the same manner, define for $\alpha\in\Omega^1(M)$ the $G$-invariant
average $\alpha_G\in \Omega^1(M)^G$ of $\alpha$ as follows:
\begin{align*}
\alpha_G([g,b]_H)=\left(\Phi_{g^{-1}}^*
\left(\int_H\Phi_h^*\alpha dh\right)\right)([g,b]_H);
\end{align*}
that is, for each point $m'=[g,b]_H\in U$ we have
\begin{align}
\label{alpha_G}
{\alpha_G}([g,b]_H)&=\left(\int_H\Phi_h^*\alpha dh\right)_{[e,b]_H}\circ
T_{[g,b]_H}\Phi_{g^{-1}} \nonumber \\
&=\left(\int_H(\alpha([h,b]_H)\circ T_{[e,b]_H}\Phi_h) dh\right)\circ
T_{[g,b]_H}\Phi_{g^{-1}}.
\end{align}
In an analogous manner as above, we can show that $\alpha_G$ is well-defined, 
smooth, and
$G$-invariant. In the following, the one-form $\int_H\Phi_h^*\alpha dh$
 will be called $\alpha^H$.

If $(X,\alpha)$ is a section of a $G$-invariant generalized
distribution $\D$, the section $(X_G,\alpha_G)$ is a $G$-invariant section
of $\D$. 

Note that, in the same manner, we can define the $G$-invariant average $f_G$
of a smooth 
function $f$ defined on the tube $U$ of the action of $G$ at $m$. The function 
$f_G$ is defined by
\[f_G([g,b]_H)=\int_{h\in H}f([h,b]_H)dh.\]
Again, it is easy to check that $f_G$ is well-defined. The smoothness of $f_G$
can be shown with similar arguments as for the smoothness of $X_G$. 

\medskip

Let $P $ be a connected component of an orbit type manifold (recall \eqref{P(H)}) 
and $\bar P: = \pi(P) $,
where $\pi: M \rightarrow M/G 
= : \bar{M}$ is the orbit space projection. 
Since $G$ is
connected, the subgroup $G^P$ 
of $G$ such that $\Phi_g(P)\subseteq P$ for all $g\in G^P$ is equal to
$G$. Hence the proper action of 
$G$ on $M$ restricts to a proper action  $\Phi^P$ of $G$ on $P$ satisfying 
$\iota_P\circ \Phi^P_g=\Phi_g\circ\iota_P$ for all $g\in G$. Moreover, the
action of 
$G$ on $P$ has conjugated isotropy subgroups and thus the quotient $P/G$ is a
smooth manifold. 
Let $\pi_P$ be the quotient map.
Using the previous discussion, we can relate the
differential structures on $\bar{P}$, seen as  the quotient manifold of $P$ by 
the smooth and
proper $G$-action, and as a stratum of the stratified space $\bar M$.
\begin{proposition}\label{equ_of_smooth_structures}
Let $P$ be a connected component of an orbit type manifold $M_{(H)}$.  
The quotient $P/G$ is diffeomorphic  to the
stratum $\pi(P)=\bar P$ 
of $\bar M$. 
\end{proposition}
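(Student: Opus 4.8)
The plan is to exhibit an explicit diffeomorphism between the two differential spaces $P/G$ and $\bar P = \pi(P)$ and to verify that it respects the two differential structures in both directions. The underlying sets are the same: $\pi(P) \subseteq \bar M$ consists precisely of the $G$-orbits lying in $P$, and since $P$ is $G$-invariant (as established just above via $G^P = G$), the orbit map $\pi$ restricted to $P$ factors through $P/G$. Thus the natural map is $\bar\iota : P/G \to \bar P$ induced by $\pi_P$ and the inclusion $\iota_P : P \hookrightarrow M$; concretely $\bar\iota(\pi_P(p)) = \pi(\iota_P(p))$, which is well-defined and a bijection of sets. The content of the proposition is that $\bar\iota$ is a diffeomorphism of differential spaces.

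First I would fix the two differential structures carefully. On $P/G$ we use the quotient structure coming from the free and proper $N(H)/H$-reasoning specialized here: since the action of $G$ on $P$ has conjugate (in fact, after the reduction to $P$, constant) isotropy, $P/G$ is a genuine smooth manifold, and $C^\infty(P/G)$ consists of functions $\bar f$ with $\pi_P^* \bar f \in C^\infty(P)$. On $\bar P$ viewed as a stratum of $\bar M$, the differential structure is the one defined in the excerpt just before Proposition \ref{restriction_of_X}: $f_{\bar P}$ is smooth if locally it is the restriction to $U \cap \bar P$ of some $f \in C^\infty_{\rm glob}(\bar M)$, equivalently (by the quoted extension property) of some $f_x \in C^\infty(\bar M)$, where smoothness on $\bar M$ means $\pi^* f_x \in C^\infty(M)$. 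The key computation is then the compatibility identity $\pi_P^*(\bar\iota^* f_{\bar P}) = \iota_P^*(\pi^* f_x)$ on the relevant domain, which holds because $\pi \circ \iota_P = \bar\iota \circ \pi_P$ as maps $P \to \bar M$.

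With these structures in hand, the two smoothness directions become the following. To see that $\bar\iota$ is smooth, take $f_{\bar P} \in C^\infty(\bar P)$; near any point it extends to $f_x \in C^\infty(\bar M)$, so $\pi^* f_x \in C^\infty(M)$, and restricting to the submanifold $P$ gives $\iota_P^*(\pi^* f_x) \in C^\infty(P)$ (restriction of a smooth function on $M$ to an embedded submanifold is smooth). Since this equals $\pi_P^*(\bar\iota^* f_{\bar P})$ locally, and smoothness of a function on the quotient manifold $P/G$ is detected by pulling back along $\pi_P$, we conclude $\bar\iota^* f_{\bar P} \in C^\infty(P/G)$. For the inverse direction, take $\bar f \in C^\infty(P/G)$, so $\pi_P^* \bar f \in C^\infty(P)$; the task is to produce, locally around each orbit, an extension to a function on $\bar M$ that is smooth in the stratified-space sense, i.e.\ whose pullback by $\pi$ is smooth on an open neighborhood in $M$.

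The main obstacle is exactly this last extension step: I must upgrade a $G$-invariant smooth function defined only on the orbit type manifold $P$ to a $G$-invariant smooth function on an open neighborhood of $P$ in $M$, so that its $\pi$-pullback lies in $C^\infty(M)$. This is where the Tube Theorem and the $G$-invariant averaging machinery developed in this section are meant to be used. Concretely, around a point $m \in P$ I would pass to a tube $U \cong G\times_H B$, identify $P\cap U$ with the appropriate fixed-point/orbit-type subset, extend the invariant function off $P$ within the slice $B$ using the equivariant tube coordinates (a smooth extension in the slice directions, e.g.\ by composing with a smooth retraction or simply treating the remaining slice variables as extra arguments), and then apply the $G$-invariant averaging $f \mapsto f_G$ to guarantee $G$-invariance of the extension while preserving smoothness, as shown for $f_G$ above. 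The resulting invariant function descends to $\bar M$ and its pullback by $\pi$ is smooth on $U$, which is the required local extension; patching via the sheaf property of $C^\infty(\bar M)$ then finishes the argument and shows $\bar\iota^{-1}$ is smooth, so $\bar\iota$ is a diffeomorphism.
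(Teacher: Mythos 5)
Your proposal is correct and follows essentially the same route as the paper: the same bijection $\pi_P(p)\mapsto\pi(\iota_P(p))$, the same commuting identity $\pi\circ\iota_P=\iota_{\bp}\circ\Lambda\circ\pi_P$, the forward direction by pulling an extension $f_x\in C^\infty(\bar M)$ back to $M$ and restricting to $P$, and the inverse direction by extending a $G$-invariant function off $P$ and then applying the $G$-invariant average. The only cosmetic difference is that the paper simply invokes local extension of a smooth function off the embedded submanifold $P$ before averaging, whereas you spell out that extension via the tube theorem and the slice; both are fine.
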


\begin{proof}
The bijectivity of the well-defined map $\Lambda:P/G\to\bar P$,
$\pi_P(p)\mapsto \pi(\iota_P(p))$ 
is easy. Note that we have $\pi\circ\iota_P=\iota_\bp\circ\Lambda\circ\pi_P$.

Let $f_\bp\in C^\infty(\bar P)$ and $\bar p\in\bar P$ in the domain of
definition of $f_\bp$. We have to find a neighborhood $U_\bp\subseteq \bar P$
of $\bar p$  such that
$\Lambda^*(f_\bp\an{U_\bp})\in C^\infty(P/G)$. Since  $f_\bp\in C^\infty(\bar
P)$, there exists a neighborhood $U\subset M$ of $\bar p$ and $\bar f\in
C^\infty(\bar M)$ such that $f_\bp\an{U_\bp}=\bar f\circ
\iota_\bp\an{U_{\bp}}$. Assume without loss of generality that $U_\bp=U\cap
\bp$. Since $\bar f$ is a smooth function on $\bar M$, there exists $f\in
C^\infty(M)^G$ 
such that $f=\pi^*(\bar f\,)$.  But then we have 
\[\pi_P^*(\Lambda^*(f_\bp\an{U_\bp}))=(\pi_P^*\circ\Lambda^*\circ\iota_\bp^*)(\bar
f)
=( \iota_P^*\circ\pi^*)(\bar f\,)=\iota_P^*(f)\in C^\infty(P),\]
and hence $\Lambda^*(f_\bp\an{U_\bp})\in C^\infty(P/G)$.

Let $f_{P/G}\in C^\infty(P/G)$. We have to show that
$(\Lambda^{-1})^*(f_{P/G})$ is an 
element of $C^\infty(\bar P)$. Define $f_P : = \pi_P^*(f_{P/G}) \in
C^\infty(P)^G$ and 
extend it to a function $f\in C^\infty(M)$, that is,
$\iota_P^*(f)=f_P$. Without loss 
of generality we can assume that $f$ is $G$-invariant (otherwise, the
$G$-invariant 
average of $f$ will also pull back to $f_{P}$), and thus pushes forward 
to $\bar f\in C^\infty(\bar M)$.  
Then we have
\[
(\pi_P^*\circ\Lambda^*\circ\iota_\bp^*)(\bar f\,)
=(\iota_P^*\circ\pi^*)(\bar f\,)=f_P=\pi_P^*(f_{P/G});
\]
hence 
\[(\Lambda^*\circ\iota_\bp^*)(\bar f\,)=f_{P/G}
\]
since $\pi_P$ is a smooth surjective submersion.
From this follows 
\[ (\Lambda^{-1})^*(f_{P/G}) =\iota_\bp^*(\bar f\,),\]
which is an element of $C^\infty(\bar P)$.
\end{proof}
Thus, in the following, we will identify $\bar P$ and $P/G$ without further 
mentioning it.

\subsection{Push-forward of vector fields and one-forms}\label{pw_of_vf}
Consider a $G$-invariant local vector field $X$ on $M$. Since $X$ is $
G$-invariant, the  push-forward $\bar X:=\pi_{\ast }X$, defined by 
$\pi ^{\ast }((\pi _{\ast }X) (\bar f))=X(\pi ^{\ast }(\bar f))$ 
for every $\bar f\in C^{\infty }(\bar M)$,  is a well-defined (local) derivation
of $C^{\infty }(\bar M)$. Moreover, $X$ generates a
local one-parameter group $\phi_t^X$ of local diffeomorphisms of $M$. Since $X
$ is $G$-invariant, $\phi_t^X$ commutes with the action of $G$ on $M$, and it
induces a local one-parameter group of local diffeomorphisms of $\bar M$
generated by $\pi _{\ast }X$. Hence, $\pi_{\ast }X$ is a (local) vector field on $
\bar M$. 

We write $\mx(\bar M)$ for the sheaf of (local) vector fields on $\bar M$. Then we have 
\begin{equation}\label{duistermaat}
\pi_*\left(\mx(M)^G\right)=\mx(\bar M)
\end{equation}
(see \cite{Duistermaat}, Theorem 6.10).
In particular, for each stratum of $\bar M$ the tangent bundle space of the
stratum is spanned by push-forwards by $\pi $ of $G$-invariant vector
fields on $M$. It is easy to see that the sheaf of local vector fields on
$\bar P$ is the set of local restrictions to $\bar P$ of elements of $\bar
M$. Also, Proposition \ref{restriction_of_X} is also true for local vector fields. 

Yet, the class of vector fields on $M$ that push forward to vector fields on
$\bar M$ is bigger than the class of $G$-invariant vector fields, as  the
next lemma shows.
\begin{lemma}\label{lem:push_down_der}
If $X\in\mx(M)$ is such that $[X,\Gamma(\V)]\subseteq\Gamma(\V)$, 
then it defines a derivation of the
ring $C^\infty(M)^G$ of $G$-invariant functions. Therefore, it pushes down to a
derivation $\bar X$ of $C^\infty(\bar M)$.
The derivation $\bar X$ is a vector field on the subcartesian space $\bar M$.
\end{lemma}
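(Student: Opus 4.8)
The plan is to verify two things in sequence: first that the hypothesis $[X,\Gamma(\V)]\subseteq\Gamma(\V)$ forces $X$ to preserve the ring $C^\infty(M)^G$ of $G$-invariant functions, and second that the induced derivation $\bar X$ of $C^\infty(\bar M)$ is genuinely a vector field on the subcartesian space $\bar M$, i.e.\ that it integrates to a local one-parameter group of local diffeomorphisms.

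For the first part, I would take $f\in C^\infty(M)^G$ and show $X(f)$ is again $G$-invariant. The cleanest characterization of $G$-invariance for a connected group is infinitesimal: since $G$ is connected, $f$ is $G$-invariant if and only if $\xi_M(f)=0$ for every $\xi\in\lie g$, equivalently $Y(f)=0$ for every $Y\in\Gamma(\V)$ (as $\V$ is spanned by the fundamental vector fields). So I would compute, for an arbitrary $Y\in\Gamma(\V)$,
\[
Y\bigl(X(f)\bigr)=X\bigl(Y(f)\bigr)-[X,Y](f).
\]
The term $Y(f)=0$ because $f$ is invariant, so $X(Y(f))=0$. By hypothesis $[X,Y]\in\Gamma(\V)$, hence $[X,Y](f)=0$ as well, again by invariance of $f$. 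Thus $Y(X(f))=0$ for all $Y\in\Gamma(\V)$, which gives $X(f)\in C^\infty(M)^G$. This shows $X$ restricts to a derivation of $C^\infty(M)^G$, and since $\pi^*\colon C^\infty(\bar M)\to C^\infty(M)^G$ is an isomorphism onto its image, the formula $\pi^*(\bar X(\bar f))=X(\pi^*\bar f)$ unambiguously defines a derivation $\bar X$ of $C^\infty(\bar M)$.

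For the second part, I would argue exactly as in the $G$-invariant case treated just before the lemma, but I must first check that the flow of $X$ descends. The key observation is that the condition $[X,\Gamma(\V)]\subseteq\Gamma(\V)$ says precisely that the flow $\phi^X_t$ of $X$ preserves the tangent distribution $\V$, hence maps $G$-orbits to $G$-orbits near each point: infinitesimally, $X$-invariance of $\V$ means that the (local) flow $\phi^X_t$ satisfies $(\phi^X_t)_*\V=\V$, so $\phi^X_t$ carries the local foliation by orbits of $\V$ into itself. Since the orbits of $\V$ are (by connectedness of $G$) the connected components of the $G$-orbits, $\phi^X_t$ sends $\pi$-fibers to $\pi$-fibers, and therefore induces a well-defined local one-parameter family $\bar\phi_t$ of local diffeomorphisms of $\bar M$ with $\pi\circ\phi^X_t=\bar\phi_t\circ\pi$. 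That $\bar\phi_t$ is smooth as a map of differential spaces follows because $\pi$ is smooth and surjective with local sections provided by slices, and that $\bar\phi_t$ is the flow generated by the derivation $\bar X$ follows by differentiating $\pi\circ\phi^X_t=\bar\phi_t\circ\pi$ and using $\pi^*\bar X(\bar f)=X(\pi^*\bar f)$. By the definition of a vector field on a subcartesian space recalled in Section~\ref{sec:differential_spaces}, the existence of this local group $\bar\phi_t$ is exactly what certifies that $\bar X\in\mx(\bar M)$.

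The main obstacle is the second part: showing that $X$-invariance of the distribution $\V$ actually forces the flow $\phi^X_t$ to map orbits to orbits, so that it descends to $\bar M$. The infinitesimal invariance $[X,\Gamma(\V)]\subseteq\Gamma(\V)$ gives invariance of $\V$ only up to the subtlety that $\V$ is a \emph{generalized} (singular) distribution whose rank may jump; one must therefore be careful that the Stefan--Sussmann integrability machinery of Theorem~\ref{frobenius with vector fields} applies so that $\phi^X_t$ respects the accessible sets (the orbits). The point is that $\V$ is the tangent distribution to the generalized foliation by $G$-orbit components, which is integrable by construction, and invariance of an integrable distribution under a flow implies the flow permutes its leaves; this is where I would have to invoke the foliation structure rather than merely the algebraic bracket condition. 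Once the descent of $\phi^X_t$ is established, the identification with the flow of $\bar X$ and the verification that $\bar X$ is a subcartesian vector field are routine.
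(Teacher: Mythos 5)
Your treatment of the first assertion is correct and is essentially the paper's own computation: for $f\in C^\infty(M)^G$ and $Y\in\Gamma(\V)$ one has $Y(X(f))=X(Y(f))-[X,Y](f)=0$, and connectedness of $G$ upgrades the infinitesimal invariance $\xi_M(X(f))=0$ to genuine $G$-invariance of $X(f)$, so $\bar X$ is a well-defined derivation of $C^\infty(\bar M)$.

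The second assertion is where your argument has a genuine gap, and it sits exactly at the step you flag but do not close. You claim that $[X,\Gamma(\V)]\subseteq\Gamma(\V)$ ``says precisely'' that $(\phi^X_t)_*\V=\V$, so that $\phi^X_t$ permutes $G$-orbits. For a singular distribution this implication is not automatic: the hypothesis is an algebraic involutivity-type condition, and the paper itself emphasizes that algebraic involutivity and invariance under flows coincide only for subbundles. Your proposed repair --- invoking integrability of $\V$ and Theorem~\ref{frobenius with vector fields} --- does not help, because that theorem concerns the pseudogroup generated by flows of vector fields \emph{spanning} the distribution, and $X$ is not a section of $\V$. The standard ODE argument for flow-invariance would require writing $[X,\xi_M]=\sum_i f_i(\xi_i)_M$ with smooth coefficients, and a smooth section of a singular distribution need not admit such a representation in terms of a given generating family. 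Note also that the paper obtains the fact that $\phi^X_t$ maps orbits to orbits only \emph{after} this lemma, via the decomposition $X=X^G+X^\V$, which uses the lemma together with \eqref{duistermaat}; trying to establish orbit-preservation first therefore needs an independent argument that you have not supplied.

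The paper's route avoids the issue entirely by working downstairs from the start: $\bar X$ is a derivation of the subcartesian space $\bar M$, hence has a unique maximal integral curve through each point; since $t\mapsto\pi(\phi_t(m))$ is an integral curve of $\bar X$ through $\pi(m)$, uniqueness forces $\bar\phi_t\circ\pi=\pi\circ\phi_t$, and then $\pi^*(\bar\phi_t^*\bar f\,)=\phi_t^*(\pi^*\bar f\,)\in C^\infty(M)$ shows that $\bar\phi_t^*$ preserves $C^\infty(\bar M)$, i.e.\ $\bar X\in\mx(\bar M)$. In that argument the descent of the flow (and hence orbit-preservation) is a \emph{consequence} of the uniqueness of integral curves of derivations on subcartesian spaces, not an input. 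If you wish to keep your geometric route, you must first prove by independent means that $\phi^X_t$ maps fibers of $\pi$ to fibers of $\pi$; as written, that step is missing.
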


\begin{proof}
Let $f\in C^\infty(M)^G$ and $g\in G$. Since $[X,V]\in\Gamma(\V)$ for all
sections $V\in\Gamma(\V)$, we have, in particular,  $[X,\xi_M]=V_{\xi}\in\Gamma(\V)$
for each $\xi\in\lie g$ and thus:
\[\xi_M(X(f))=X(\xi_M(f))-V_{\xi}(f)=0,\]
since $V(f)=0$ for all $V\in\Gamma(\V)$. 
We get for all $m\in M$:
\[\left.\frac{d}{dt}\right\an{t=0}X(f)\circ\Phi_{\exp(t\xi)}(m)=0\]
and hence, for all $t\in \R$:
\begin{align*}
\frac{d}{dt}X(f)\circ\Phi_{\exp(t\xi)}(m)
&= \left.\frac{d}{ds}\right\an{s=0}X(f)\circ\Phi_{\exp((t+s)\xi)}(m)\\
&=\left.\frac{d}{ds}\right\an{s=0}X(f)
\circ\Phi_{\exp(s\xi)}\left(\Phi_{\exp(t\xi)}(m)\right)\\
&=0.
\end{align*}
Since the Lie group $G$ is connected, it is spanned as a group by every
neighborhood of its neutral element, hence from the image of the exponential
map. With this it follows that the function $X(f)$ is $G$-invariant.
Hence $X$ defines a derivation of
$C^\infty(M)^G$, and hence it induces a derivation $\bar X$ of the ring 
$C^\infty(\bar M)$ of smooth
functions on $\bar M$ as follows:
\[\pi^*(\bar X(\bar f\, )):=X(\pi^*(\bar f\, ))\]
for all $\bar f\in C^\infty(\bar M)$.

We have to show that $\bar X$ is a vector field on the quotient space $\bar
M$. Let $\bar \phi_t$ be the flow of $\bar X$ and let $\bar f\in C^\infty(\bar
M)$, i.e., $\pi^*(\bar f\, )\in C^\infty(M)$. We have to show that $\bar
\phi_t^*(\bar f\, )\in C^\infty(\bar M)$. From the definition of $\bar X$ follows
the equality 
\[\bar \phi_t\circ\pi=\pi\circ \phi_t,\]
where $\phi_t$ is the flow of $X$. 
Thus we have
\[\pi^*\left(\bar \phi_t^*(\bar f\, )\right)=\phi_t^*\left(\pi^*(\bar
  f\, )\right).\]
Since $X$ is a vector field on $M$, the function $\phi_t^*\left(\pi^*(\bar
  f\, )\right)$ is an element of $C^\infty(M)$, and hence $\bar
  \phi_t^*(\bar f\, )$ an element of $C^\infty(\bar M)$.
\end{proof}
Let $X$ be as in the last lemma, and let $\bar X$ be the vector field on $\bar M$
with $X\sim_\pi\bar X$. Since $\bar X$ is a vector field, there exists a
$G$-invariant vector field $X^G\in\mx(M)^G$ with $X^G\sim_\pi \bar X$ (see
\eqref{duistermaat}). 
Thus,
the vector field $X$ can be written as a sum $X=X^G+X^\V$, with $X^\V$ a section of
$\V$ (note that $X^G$ is in general
not equal to the $G$-invariant average $X_G$ of $X$). 
\medskip 

Let $\alpha $ be a (local) $G$-invariant one-form on $M$ annihilating vectors 
tangent to
orbits of the action of $G$ on $M$. For each $G$-invariant vector field $X$
on $M$, the evaluation $\alpha(X)$ is $G$-invariant.
Hence, there exists a smooth function $\pi _{\ast }(\alpha(X)) $ defined on
$\bar M$ by 
$\pi^*(\pi_\ast(\alpha(X)))=\alpha(X)$. Since $\alpha $ annihilates 
vectors tangent to orbits
of $G$, it follows that $\pi _{\ast }(\alpha(X)) $ depends
on $X$ through its push-forwards $\pi _{\ast }X$. In other words, there is a
linear form $\pi _{\ast }\alpha $ on the space of push-forwards by $\pi $
of $G$-invariant vector fields on $M$ such that 
\[
(\pi _{\ast }\alpha)(\pi _{\ast }X)=\pi _{\ast }(\alpha(X))
\quad \text{ for all } X\in\mx(M)^G. 
\]
Moreover, for every $\bar f\in C^{\infty }(\bar M)$, 
\begin{align*}
(\pi _{\ast }\alpha)(\bar f\pi _{\ast }X) &=(\pi _{\ast
}\alpha)(\pi _{\ast }(\pi ^{\ast }(\bar f)X)) =\pi _{\ast }(\alpha (\pi^*(\bar f)X))\\
&=\pi _{\ast }(\pi^*(\bar f))\,\pi _{\ast }(\alpha(X))
=\bar f(\pi _{\ast }\alpha)( \pi _{\ast }X), 
\end{align*}
that is, $\pi_*\alpha$ is $C^\infty(\bar M)$-linear. 
This implies that, for every stratum of $\bar M$, the restriction of $\pi
_{\ast }\alpha $ to the stratum gives rise to a well-defined one-form on the
stratum.

\begin{definition}
Let $G$ be a Lie group acting properly on the manifold $M$.  Let $\V$ be the
vertical space of the action.
A section $(X,\alpha)$ in $\Gamma(TM\operp\V^\circ)$ satisfying
$[X,\Gamma(\V)]\subseteq \Gamma(\V)$ and $\alpha\in\Gamma(\V^\circ)^G$ will be
called a \emph{descending section}.
\end{definition}

\medskip

We will  also need a few more facts about \emph{one-forms} of $\bar
M$. 
Indeed, we have a notion of vector fields on $\bar M$, and we know that these are exactly
the push-forwards of descending vector fields on $M$.
We also want to introduce objects which will play the role of  \emph{one-forms} on 
$\bar M$. The
definition of a one-form
on $\bar M$ should be such that each element $\alpha_\bp\in\Omega^1(\bar P)$,
where $\bar P$ is a stratum of $\bar M$,
is the restriction to $\bar P$ of a one-form on $\bar M$. Thus, we could
define a one-form as a $C^\infty(\bar M)$ linear map $\mx(\bar M)\to
C^\infty(\bar M)$, but since we want a one-to-one correspondence between
sections  $\Gamma(\V^\circ)^G$ and one-forms on $\bar M$, we need to define
these more carefully.

By the space of \emph{K\"ahler differentials of $C^\infty(\bar M)$ over $\R$}
one understands a $C^\infty(\bar M)$-module $\Omega_{C^\infty(\bar M)/\R}$
together with a derivation $\dr: C^\infty(\bar M)\to \Omega_{C^\infty(\bar
  M)/\R}$ called the \emph{K\"ahler derivative} such that the following universal
property is satisfied (see \cite{Pflaum01}):

\medskip
\noindent \emph{For every $C^\infty(\bar M)$-module $\mathcal{M}$ and every derivation
$\delta:C^\infty(\bar M)\to \mathcal{M}$ there exists a unique $\R$-linear
mapping $\mathbf{i}_\delta:\Omega_{C^\infty(\bar M)/\R}\to\mathcal{M} $
such that the diagram 
\[
\begin{xy}
\xymatrix{
C^\infty(\bar M)\ar[d]_{\dr}\ar[r]^\delta&\mathcal{M}\\
\Omega_{C^\infty(\bar M)/\R}\ar[ur]_{\mathbf{i}_\delta}&\\
}
\end{xy}\]
commutes}. 

\medskip
In particular, if $\mathcal{M}=C^\infty(\bar M)$ and
$\delta$ is a vector field $\bar X$ on $\bar M$, we get $\bar X(\bar f)=\ip{\bar
  X}\dr \bar f$ for each function $\bar f\in C^\infty(\bar M)$ and $\ip{\bar
  X}$ is the inner product with $\bar X$. 
\begin{proposition} [\cite{Matsumura80}]\label{matsu}
The space $\Omega_{C^\infty(\bar
  M)/\R}$ exists and can be represented as follows.
Let $\Omega$ be the free $ C^\infty(\bar M)$-module over the symbols
  $\dr \bar f$ with $\bar f\in C^\infty(\bar M)$, and $\mathcal{J}$ the
  $C^\infty(\bar M)$-submodule generated by the relations
\begin{align*}
\dr(\lambda \bar f+\mu\bar g)-\lambda\dr \bar f-\mu\dr \bar g=0 &\quad \text{ for
  all } \quad
\lambda,\mu\in\R,\bar f,\bar g\in C^\infty(\bar M), \\
\dr(\bar f\bar g)-\bar f\dr\bar g-\bar g \dr\bar f=0 &\quad\text{ for
  all } \quad\bar f,\bar g\in C^\infty(\bar M).
\end{align*} 
Then $\Omega_{C^\infty(\bar M)/\R}=\Omega/\mathcal{J}$ and
$\dr:C^\infty(\bar M)\to\Omega_{C^\infty(\bar M)/\R}$ is defined by  $\bar f\mapsto
\dr\bar f+\mathcal{J}$. 
\end{proposition}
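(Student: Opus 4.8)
The plan is to verify directly that the pair $(\Omega_{C^\infty(\bar M)/\R}, \dr)$ exhibited in the statement solves the universal problem; the argument is purely algebraic and uses nothing about $C^\infty(\bar M)$ beyond its being a commutative $\R$-algebra, which is exactly why the result is attributed to a commutative algebra reference. Writing $A := C^\infty(\bar M)$ for brevity, let $\Omega$ be the free $A$-module on the symbols $\dr\bar f$, $\bar f\in A$, let $\mathcal{J}\subseteq\Omega$ be the submodule generated by the two families of relations in the statement, and set $\Omega_{A/\R}:=\Omega/\mathcal{J}$ with $\dr\bar f:=\dr\bar f+\mathcal{J}$.

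First I would check that $\dr:A\to\Omega_{A/\R}$ is a derivation. After passing to the quotient, the first family of relations says precisely that $\dr(\lambda\bar f+\mu\bar g)=\lambda\,\dr\bar f+\mu\,\dr\bar g$ in $\Omega_{A/\R}$, so $\dr$ is $\R$-linear, while the second family yields the Leibniz rule $\dr(\bar f\bar g)=\bar f\,\dr\bar g+\bar g\,\dr\bar f$. Thus $\dr$ is a derivation of $A$ into the $A$-module $\Omega_{A/\R}$, which is generated as an $A$-module by the classes $\dr\bar f$.

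Next, given any $A$-module $\mathcal{M}$ and any derivation $\delta:A\to\mathcal{M}$, I would construct $\ip{\delta}$ as follows. Since $\Omega$ is free on the symbols $\dr\bar f$, there is a unique $A$-linear map $\tilde\delta:\Omega\to\mathcal{M}$ with $\tilde\delta(\dr\bar f)=\delta(\bar f)$ for every $\bar f$. The one step requiring care is to confirm that $\tilde\delta$ annihilates $\mathcal{J}$, for which it suffices to test it on the generators: $\tilde\delta(\dr(\lambda\bar f+\mu\bar g)-\lambda\,\dr\bar f-\mu\,\dr\bar g)=\delta(\lambda\bar f+\mu\bar g)-\lambda\,\delta(\bar f)-\mu\,\delta(\bar g)=0$ by $\R$-linearity of $\delta$, and $\tilde\delta(\dr(\bar f\bar g)-\bar f\,\dr\bar g-\bar g\,\dr\bar f)=\delta(\bar f\bar g)-\bar f\,\delta(\bar g)-\bar g\,\delta(\bar f)=0$ because $\delta$ obeys Leibniz. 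Hence $\tilde\delta$ factors through an $A$-linear map $\ip{\delta}:\Omega_{A/\R}\to\mathcal{M}$ with $\ip{\delta}\circ\dr=\delta$, so the triangle commutes.

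Finally I would dispatch uniqueness: any $A$-linear map $\phi:\Omega_{A/\R}\to\mathcal{M}$ with $\phi\circ\dr=\delta$ must send the class of $\dr\bar f$ to $\delta(\bar f)$, and since these classes generate $\Omega_{A/\R}$ as an $A$-module, $\phi$ agrees with $\ip{\delta}$ throughout; note that the natural map we produce is $A$-linear, hence in particular $\R$-linear as in the stated universal property, and it is precisely $A$-linearity that makes the map unique. This establishes the universal property, and hence both the existence of $\Omega_{C^\infty(\bar M)/\R}$ and the asserted presentation. There is no substantive obstacle here—the only delicate point is the well-definedness of $\ip{\delta}$ on the quotient, that is, that the assignment $\dr\bar f\mapsto\delta(\bar f)$ respects exactly the relations defining $\mathcal{J}$, which is what forces $\delta$ to be simultaneously $\R$-linear and a derivation.
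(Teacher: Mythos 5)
Your argument is correct, and it is the standard textbook verification: present $\Omega_{C^\infty(\bar M)/\R}$ as the free module on the symbols $\dr\bar f$ modulo the linearity and Leibniz relations, check that the induced $\dr$ is a derivation, factor an arbitrary derivation $\delta$ through the quotient by testing the generators of $\mathcal{J}$, and get uniqueness from the fact that the classes $\dr\bar f$ generate the quotient as a $C^\infty(\bar M)$-module. The paper itself supplies no proof at all here — the proposition is simply quoted from Matsumura — so you have filled in exactly the argument the citation points to; nothing in it uses smoothness, only that $C^\infty(\bar M)$ is a commutative $\R$-algebra. One remark worth making explicit: the universal property as the paper phrases it asks for a unique \emph{$\R$-linear} map $\ip{\delta}$, and uniqueness in that generality actually fails (the image of $\dr$ need not span $\Omega_{C^\infty(\bar M)/\R}$ over $\R$; already for polynomials in two variables $x\,\dr y$ is not an $\R$-linear combination of exact differentials). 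The correct requirement, and the one your proof delivers, is that $\ip{\delta}$ be $C^\infty(\bar M)$-linear; you note this yourself at the end, so your proof is sound against the standard formulation, and the discrepancy is a small imprecision in the paper's statement of the universal property rather than a gap in your argument.
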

From this it follows immediately that each element of $\Omega_{C^\infty(\bar
  M)/\R}$ can be written as a sum $\sum_j\bar g_j\dr \bar f_j$ with finitely
many $\bar g_j, \bar f_j\in C^\infty(\bar M)$. 

Hence, let $\bar \alpha=\sum_{j=1}^k\bar
g_j\dr \bar f_j\in \Omega_{C^\infty(\bar M)/\R}$ and set
$\alpha=\sum_{j=1}^n\pi^*\bar
g_j\dr(\pi^* \bar f_j)\in \Gamma(\V^\circ)$. We then have for each $G$-invariant
vector field $X$ on $M$:
\begin{align*}
\pi^*((\pi_*\alpha)(\pi_*X))&=\alpha(X)=\sum_{j=1}^n\pi^*\bar
g_j\dr(\pi^* \bar f_j)(X)
=\sum_{j=1}^n\pi^*\bar
g_j X(\pi^* \bar f_j)\\
&=\pi^*\left(\sum_{j=1}^n\bar g_j \,(\pi_*X)(\bar f_j)\right)
=\pi^*\left(\sum_{j=1}^n\bar g_j \ip{\pi_*X}\dr\bar f_j\right).
\end{align*} 
Hence, the $C^\infty(\bar M)$-linear map $\pi_*\alpha:\mx(\bar M)\to
C^\infty(\bar M)$ corresponds exactly to the $C^\infty(\bar M)$-linear map 
$\mx(\bar M)\to
C^\infty(\bar M)$
defined by $\bar \alpha$ as follows: \[\bar \alpha(\bar X):=\sum_{j=1}^k\bar
g_j\,\ip{\bar X}\dr \bar f_j\text{ for all } \bar X\in\mx(\bar M).\]
We set $\alpha=:\pi^*\bar\alpha$.
Thus, each K\"ahler differential on $C^\infty(\bar M)$ can be realized as the
push-forward of an element of $\Gamma(\V^\circ)^G$.
Conversely, we will see later that each element $\alpha \in \Gamma(\V^\circ)^G$ 
can be written as a sum
$\alpha=\sum_{j=1}^kg_j\dr f_j$ with $g_j,f_j\in C^\infty(M)^G$ (see  Lemma
\ref{description_V_G}) 
and thus
pushes forward to the K\"ahler differential $\sum_{j=1}^k(\pi_*g_j)\dr(\pi_* f_j)$.
An element $\bar \alpha\in \Omega_{C^\infty(\bar M)/\R}$  will be called a
\emph{one-form} on $\bar M$ and the set of one-forms on $\bar M$ will  be denoted by
$\Omega^1(\bar M)$. 
We have shown the following proposition.
\begin{proposition}\label{corres-one-forms}
The one-forms on $\bar M$ correspond  exactly to the push-forwards of elements
of $\Gamma(\V^\circ)^G$.
\end{proposition}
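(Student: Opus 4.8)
The plan is to read the statement as an equality of two sets---the one-forms on $\bar M$, that is, the K\"ahler differentials $\Omega_{C^\infty(\bar M)/\R}$, on one side, and the push-forwards $\{\pi_*\alpha \mid \alpha\in\Gamma(\V^\circ)^G\}$ on the other---and to verify the two inclusions separately. Since the discussion immediately preceding the statement already carries out the relevant bookkeeping, the proof amounts to collecting those two computations, one for each inclusion, and recording that together they give the claimed correspondence.

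For the inclusion of one-forms into push-forwards, I would start from an arbitrary $\bar\alpha\in\Omega^1(\bar M)$. By Proposition \ref{matsu} it can be written as a finite sum $\bar\alpha=\sum_{j=1}^k\bar g_j\,\dr\bar f_j$ with $\bar g_j,\bar f_j\in C^\infty(\bar M)$. Setting $\alpha:=\sum_{j=1}^k\pi^*\bar g_j\,\dr(\pi^*\bar f_j)$, I would check that $\alpha$ is $G$-invariant and lies in $\Gamma(\V^\circ)$: the functions $\pi^*\bar f_j$ and $\pi^*\bar g_j$ are $G$-invariant, hence so are their products and exterior derivatives, and $\dr(\pi^*\bar f_j)$ annihilates every fundamental vector field $\xi_M$ because $\pi^*\bar f_j$ is constant along the orbits to which the $\xi_M$ are tangent. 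The calculation displayed just before the statement then yields $\pi_*\alpha=\bar\alpha$, so every one-form on $\bar M$ is the push-forward of an element of $\Gamma(\V^\circ)^G$.

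For the reverse inclusion, I would take an arbitrary $\alpha\in\Gamma(\V^\circ)^G$ and invoke Lemma \ref{description_V_G} to write it as $\alpha=\sum_{j=1}^k g_j\,\dr f_j$ with $g_j,f_j\in C^\infty(M)^G$. Being $G$-invariant, the $g_j$ and $f_j$ push forward to $\pi_*g_j,\pi_*f_j\in C^\infty(\bar M)$, so $\alpha$ pushes forward to $\sum_{j=1}^k(\pi_*g_j)\,\dr(\pi_*f_j)$, which is by definition a K\"ahler differential, hence an element of $\Omega^1(\bar M)$. This gives the second inclusion and identifies the two sets. The only genuine input is this reverse direction, which rests entirely on the forward-referenced Lemma \ref{description_V_G}; everything else is formal manipulation with the universal property of $\Omega_{C^\infty(\bar M)/\R}$ and the definition of $\pi_*$, so the main obstacle is really just to have that representation lemma in hand.
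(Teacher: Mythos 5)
Your proof is correct and follows essentially the same route as the paper: one inclusion via the representation of K\"ahler differentials from Proposition \ref{matsu} and the displayed computation identifying $\pi_*\alpha$ with $\bar\alpha$, the other via the forward reference to Lemma \ref{description_V_G} to write $\alpha\in\Gamma(\V^\circ)^G$ as $\sum_j g_j\,\dr f_j$ with $g_j,f_j\in C^\infty(M)^G$. The only difference is that you explicitly verify $\sum_j\pi^*\bar g_j\,\dr(\pi^*\bar f_j)\in\Gamma(\V^\circ)^G$, which the paper asserts without comment.
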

Note that not every smooth section of the stratified cotangent
space on $\bar{M}$, i.e., a smooth $C^\infty(\bar{M})$-linear map 
$\mx(\bar M)\to C^\infty(\bar M)$, can be realized as a
one-form on $\bar M$ (see \cite{Pflaum01} for the definition and discussion). 
There is a nontrivial condition for this to hold; see
Proposition 2.3.7 in \cite{Pflaum01}. Hence, since each element of
$\Gamma(\V^\circ)^G$ pushes forward to a one-form on $\bar M$, there should be 
smooth $C^\infty(\bar
M)$-linear maps $\mx(\bar M)\to C^\infty(\bar M)$ which cannot be realized
as  push-forwards of  elements of $\Gamma(\V^\circ)^G$.

Note that for vector fields, we have the analogous fact that each vector field
on $\bar M$ is the push-forward of a $G$-invariant vector field on $M$ (see
\eqref{duistermaat}), but that not \emph{all} derivations on $\bar M$ are
vector fields on $\bar M$.

\subsection{Connected components of the orbit types}
Let $F^G$ be the everywhere defined family of local vector fields
\begin{align*}
F^G=\{X\in\mx(M)^G\mid X=X^G+X^\V \text{ with } X^G\in\mx(M)^G \text{ and } X^\V
\in\Gamma(\V)\},
\end{align*}
$\mathcal{A}^G:=\{\phi_t\mid X\in F^G, \, \phi_t\text{ flow of } X\}$, and 
 denote by $A^G$ the
pseudogroup of local diffeomorphisms associated to the flows of the 
family $F^G$, i.e., 
\[
A^G=\{\mathds{I}\}\cup\{\phi_{t_1}^1\circ\dots\circ \phi_{t_n}^n\mid n\in \mathds{N}
\text{ and } \phi_{t_n}^n \text{ or } ( \phi_{t_n}^n)^{-1}\text{ flow of } X^n\in
F^G\}.
\]
Let $\T$ be the smooth generalized distribution spanned by $F^G$, that is, 
\[\T(m)=\erz\{X(m)\mid X\in F^G,\, m\in \dom(X)\}.\]
Note that  with Lemma \ref{lem:push_down_der} and the considerations
following its proof, $F^G$ is equal to 
\[\{X\in\mx(M)\mid [X,\Gamma(\V)]\subseteq\Gamma(\V)\}.\]

We will show that the distribution $\T$ is integrable in the sense of
Stefan-Sussman and compare its leaves with the connected components of the
orbit type manifolds.

\begin{lemma}
For each $\mathcal{F}\in A^G$ and for each $m\in \dom(\mathcal{F})\subseteq
M$, we have 
\[T_m\mathcal{F}(\T(m))=\T(\mathcal{F}(m)).\]
As a consequence, the distribution $\T$ is integrable in the sense of
Stefan-Sussman and its leaves are the $A^G$-orbits.
\end{lemma}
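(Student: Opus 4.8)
The plan is to first prove the invariance identity $T_m\mathcal{F}(\T(m))=\T(\mathcal{F}(m))$ and then read off integrability directly from Theorem \ref{frobenius with vector fields}, since that identity is exactly condition (1) of the theorem applied to the distribution $\T=\D_{F^G}$ and the pseudogroup $A^G=A_{F^G}$ in the notation there; the equivalence (1)$\Leftrightarrow$(4) then yields that $\T$ is integrable in the sense of Stefan--Sussmann with the $A^G$-orbits as maximal integral manifolds. Because every element of $A^G$ is a finite composition of flows $\phi^X_t$ of vector fields $X\in F^G$ and of their inverses, and because $T_m\mathcal{F}$ is functorial under composition while $F^G$ is a real vector space (so $(\phi^X_t)^{-1}=\phi^{-X}_t$ with $-X\in F^G$), it suffices to prove the identity for a single flow $\phi_t:=\phi^X_t$ of an arbitrary $X\in F^G$.

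The crucial step, and the one I expect to be the main obstacle, is a preliminary fact: the flow of any $X\in F^G$ preserves the vertical distribution $\V$, that is $T_m\phi_t(\V(m))=\V(\phi_t(m))$ for all $m$ and $t$ in the domain of $\phi_t$. The difficulty is that $\V$ and $\T$ have non-constant rank, so one cannot argue with local frames and an ODE in the coefficients; instead I route through the orbit structure. Recall from the discussion preceding this lemma that $F^G=\{X\in\mx(M)\mid[X,\Gamma(\V)]\subseteq\Gamma(\V)\}$, so Lemma \ref{lem:push_down_der} applies to $X$ and furnishes a vector field $\bar X$ on $\bar M$ with $\bar\phi_t\circ\pi=\pi\circ\phi_t$, where $\bar\phi_t$ is the flow of $\bar X$. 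Hence $\phi_t$ sends each $G$-orbit into a single $G$-orbit: if $\pi(m)=\pi(m')$ then $\pi(\phi_t(m))=\bar\phi_t(\pi(m))=\pi(\phi_t(m'))$. Consequently, for $\xi\in\lie g$ the curve $s\mapsto\phi_t(\Phi_{\exp(s\xi)}(m))$ lies entirely in the orbit through $\phi_t(m)$, so its velocity $T_m\phi_t(\xi_M(m))$ is tangent to that orbit, i.e.\ lies in $\V(\phi_t(m))$. This gives $T_m\phi_t(\V(m))\subseteq\V(\phi_t(m))$; applying the same to $\phi_{-t}=\phi^{-X}_t$ and using that $T_m\phi_t$ is a linear isomorphism yields equality. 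At the level of sections this says $(\phi_t)_*\Gamma(\V)\subseteq\Gamma(\V)$.

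With this in hand I would show that $(\phi_t)_*Y\in F^G$ whenever $Y\in F^G$. For $V\in\Gamma(\V)$, naturality of the Lie bracket under pushforward gives
\[
[(\phi_t)_*Y,\,V]=[(\phi_t)_*Y,\,(\phi_t)_*(\phi_{-t})_*V]=(\phi_t)_*[Y,(\phi_{-t})_*V].
\]
By the preliminary fact $(\phi_{-t})_*V\in\Gamma(\V)$, then $[Y,(\phi_{-t})_*V]\in\Gamma(\V)$ since $Y\in F^G$, and finally $(\phi_t)_*[Y,(\phi_{-t})_*V]\in\Gamma(\V)$ again by the preliminary fact; thus $[(\phi_t)_*Y,V]\in\Gamma(\V)$ and $(\phi_t)_*Y\in F^G$. (Note that this argument never needs closure of $F^G$ under the bracket, only invariance of $\V$ under $\phi_{\pm t}$.) Therefore, for $Y\in F^G$, $T_m\phi_t(Y(m))=((\phi_t)_*Y)(\phi_t(m))\in\T(\phi_t(m))$; since $\T(m)$ is spanned by such values $Y(m)$, we obtain $T_m\phi_t(\T(m))\subseteq\T(\phi_t(m))$, and the reverse inclusion follows by applying this to $\phi_{-t}$ together with $T_{\phi_t(m)}\phi_{-t}=(T_m\phi_t)^{-1}$. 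This proves the identity for single flows, hence for all $\mathcal{F}\in A^G$ by the reduction above, and the integrability conclusion follows from Theorem \ref{frobenius with vector fields} as explained. Throughout, the domains shrink in the usual way (each $\phi_t$ is only a local diffeomorphism), which I would handle tacitly.
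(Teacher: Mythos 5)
Your proof is correct, but it takes a genuinely different route from the paper's. The paper writes each $X\in F^G$ as a sum $X=X^G+X^\V$ with $X^G\in\mx(M)^G$ and $X^\V\in\Gamma(\V)$, splits $\phi^X_t=\phi^{X^G}_t\circ\phi^{X^\V}_t$ via the Trotter product formula together with a commutation argument, and then exhibits the conjugated flow $\phi^X_t\circ\phi^Y_s\circ\phi^X_{-t}$ as the flow of an explicit vector field $Z=Z^G+Z^\V\in F^G$, from which both inclusions follow; the same is done with $\phi^X_{-t}\circ\phi^Y_s\circ\phi^X_{t}$ for the reverse inclusion. You instead work directly with the characterization $F^G=\{X\in\mx(M)\mid [X,\Gamma(\V)]\subseteq\Gamma(\V)\}$: Lemma \ref{lem:push_down_der} gives $\pi\circ\phi^X_t=\bar\phi_t\circ\pi$, hence $\phi^X_t$ maps $G$-orbits into $G$-orbits and therefore preserves $\V$ (your orbit-curve argument correctly sidesteps the failure of the naive frame/ODE argument at points where the rank of $\V$ drops), and naturality of the Lie bracket under the diffeomorphism $\phi^X_t$ then yields $(\phi^X_t)_*F^G\subseteq F^G$, so $\T=\mathcal{D}_{F^G}$ is $A^G$-invariant and the equivalence $(1)\Leftrightarrow(4)$ of Theorem \ref{frobenius with vector fields} finishes the argument. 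What each approach buys: the paper's is constructive, producing the generator of the conjugated flow in the normal form $Z^G+Z^\V$, at the cost of invoking the Trotter formula and the assertion that the flows of $X^G$ and $X^\V$ commute (a point that is delicate when the $\V$-component has non-invariant coefficients); your version needs only that $\V$ itself is flow-invariant, which you establish independently, and is correspondingly shorter and more robust, at the cost of passing through the quotient $\bar M$ and the connectedness of $G$ via Lemma \ref{lem:push_down_der}.
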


\begin{proof}
Assume first that $\mathcal{F}=\phi_t^X\in A^G$ for one vector field $X\in\mx(M)$
satisfying $[X,\Gamma(\V)]\subseteq \Gamma(\V)$ (the general statement will
follow inductively, since each element of $A^G$ is a composition
of finitely many such diffeomorphisms). Write $X$ as a sum $X^G+X^\V$
with $X^G\in\mx(M)^G$ and $X^\V\in\Gamma(\V)$.  Let $v\in\T(m)$. Then
$v=Y(m)=Y^G(m)+Y^\V(m)$ for sections $Y^G\in\mx(M)^G$ and
$Y^\V\in\Gamma(\V)$. 
By the Trotter Product Formula (see, for example, \cite{OrRa04}), the flows
$\phi^X$ and $\phi^Y$ of the vector fields $X$ and $Y$ are given by 
\[\phi^X_t=\underset{n\to\infty}\lim\left(\phi^{X^G}_{t/n}
\circ \phi^{X^\V}_{t/n}\right)^n\quad
\text{ and }\quad
\phi^Y_t=\underset{n\to\infty}\lim\left(\phi^{Y^G}_{t/n}
\circ \phi^{Y^\V}_{t/n}\right)^n,\]
where $\phi^{X^G}$, $\phi^{X^\V}$, $\phi^{Y^G}$, and  $\phi^{Y^\V}$ are 
the flows of the
vector fields $X^G$, $X^\V$, $Y^G$, and $Y^\V$. But since $X^G$ and $Y^G$ are
$G$-invariant and $X^\V$ and $Y^\V$ are sections of $\V$, the flows of the
vector fields $X^G$ and $Y^G$ commute with the flows of $X^\V$ and
$Y^\V$. Hence, we get
\[\phi^X_t=\phi^{X^G}_t\circ \phi^{X^\V}_t=\phi^{X^\V}_t\circ \phi^{X^G}_t\quad
\text{ and }\quad \phi^Y_t=\phi^{Y^G}_{t}\circ \phi^{Y^\V}_{t}
=\phi^{Y^\V}_{t}\circ \phi^{Y^G}_{t}.\]
The compositions \[\phi_s:=\phi_t^X\circ \phi_s^Y\circ \phi_{-t}^X,\quad\quad
\phi_s^G:=\phi_t^{X^G}\circ
\phi_s^{Y^G}\circ \phi_{-t}^{X^G}\] and \[\phi_s^\V:=\phi_t^{X^\V}\circ
\phi_s^{Y^\V}\circ \phi_{-t}^{X^\V}\] define flows on $M$. Let $Z$, $Z^G$ and $Z^\V$
be the vector fields associated to those flows.
We then have $Z^G\in\mx(M)^G$, $Z^\V\in\Gamma(\V)$ and 
\begin{align*}
\phi_s&=\phi_t^X\circ \phi_s^Y\circ \phi_{-t}^X
=\phi^{X^G}_t\circ \phi^{X^\V}_t\circ \phi^{Y^G}_s\circ \phi^{Y^\V}_s\circ
\phi^{X^G}_{-t}\circ \phi^{X^\V}_{-t}\\
&=\left(\phi^{X^G}_t\circ \phi^{Y^G}_s
\circ \phi^{X^G}_{-t}\right)\circ\left(\phi^{X^\V}_t\circ
\phi^{Y^\V}_s\circ \phi^{X^\V}_{-t}\right)\\
&=\left(\phi^{X^\V}_t\circ
\phi^{Y^\V}_s\circ \phi^{X^\V}_{-t}\right)
\circ\left(\phi^{X^G}_t\circ \phi^{Y^G}_s\circ \phi^{X^G}_{-t}\right).
\end{align*}
The vector field $Z$ is then equal to the sum $Z^G+Z^\V$ and it satisfies 
$[Z,\Gamma(\V)]\subseteq \Gamma(\V)$. The equality
\begin{align*}
T_m\phi_t^X(Y(m))&= \left.\frac{d}{ds}\right\an{s=0}\phi_t^X\circ \phi_s^Y(m)=
\left.\frac{d}{ds}\right\an{s=0}\phi_t^X\circ \phi_s^Y\circ \phi_{-t}^X(\phi^X_t(m))\\
&=\left.\frac{d}{ds}\right\an{s=0}\phi_s(\phi^X_t(m))=Z(\phi^X_t(m))
\end{align*}
then yields the first inclusion $T_m\phi_t^X(\T(m))\in\T(\phi^X_t(m))$.

For the other inclusion, we use a similar method: let $Y=Y^G+Y^\V$ be a vector
field satisfying $[Y,\Gamma(\V)]\subseteq \Gamma(\V)$ and defined on a
neighborhood of $\phi_t^X(m)$. As above, the vector field $Z$ corresponding to
the flow $\phi_s:=\phi_{-t}^X\circ \phi_s^Y\circ \phi_{t}^X$ can be written as a sum
$Z=Z^G+Z^\V$ and is hence a section of $\T$.  We get
\begin{align*}
Y(\phi^X_t(m))&=\left.\frac{d}{ds}\right\an{s=0}\phi^Y_s(\phi^X_t(m))
=\left.\frac{d}{ds}\right\an{s=0}\left(\phi_{t}^X
\circ \phi_{-t}^X\circ \phi_s^Y\circ \phi_t^X\right)(m)\\
&=\left.\frac{d}{ds}\right\an{s=0}\left(\phi_{t}^X
\circ \phi_s\right)(m)=T_m\phi_{t}^X(Z(m))\in T_m\phi_t(\T(m)).
\end{align*}
\end{proof}

\begin{theorem}\label{leaves_of_T}
The integrable leaves of the distribution $\T$ are exactly the connected
components  of the orbit type manifolds.
\end{theorem}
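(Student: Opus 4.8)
The preceding Lemma already tells us that $\T$ is integrable in the sense of Stefan--Sussmann and that its leaves are the $A^G$-orbits, so the plan is to fix $m\in M$, set $H:=G_m$, and prove that the leaf through $m$ equals the connected component $P$ of the orbit type manifold $M_{(H)}$ containing $m$. I would first establish the fibrewise identity $\T(p)=T_pP$ for every $p$ (with $P$ the orbit type component through $p$), and then upgrade this pointwise statement to an equality of connected sets.

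For the fibrewise identity, recall from the description of $F^G$ that $\T(m)=\V(m)+\{X^G(m)\mid X^G\in\mx(M)^G\}$, and that $\V(m)=T_m(G\cdot m)$. If $X^G$ is $G$-invariant then evaluating $\Phi_h^*X^G=X^G$ at $m$ and using $hm=m$ shows that $X^G(m)$ is fixed by the linearized action of $H$ on $T_mM$, so $X^G(m)\in(T_mM)^H$. Conversely, given $v\in(T_mM)^H$, I would pick any $Y\in\mx(M)$ with $Y(m)=v$ and pass to its $G$-invariant average $Y_G\in\mx(M)^G$; since $hm=m$ and $v$ is $H$-fixed, the averaging formula collapses to $Y_G(m)=\int_H T_m\Phi_{h^{-1}}v\,dh=v$. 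Hence $\{X^G(m)\}=(T_mM)^H$ and $\T(m)=T_m(G\cdot m)+(T_mM)^H$. On the other hand, the Tube Theorem identifies a $G$-invariant neighbourhood of $G\cdot m$ with $G\times_H B$, where $B$ is an $H$-invariant neighbourhood of $0$ in $V:=T_mM/T_m(G\cdot m)$; since $H$ is compact, the points of that tube whose isotropy is conjugate to $H$ are exactly the $[g,b]_H$ with $b\in V^H$, so $M_{(H)}$ is locally $(G/H)\times V^H$ and $T_mP=T_m(G\cdot m)\oplus V^H$. Because $(T_mM)^H=(T_m(G\cdot m))^H\oplus V^H$ with $(T_m(G\cdot m))^H\subseteq T_m(G\cdot m)$, the two computations agree: $\T(m)=T_m(G\cdot m)+V^H=T_mP$.

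With $\T(p)=T_pP$ in hand, I would finish as follows. The component $P$ is an embedded submanifold (Subsection \ref{subsection_orbits}), connected, and satisfies $T_pP=\T(p)$ at each of its points, so it is an integral manifold of $\T$ of maximal dimension everywhere; by maximality of leaves, $P$ is contained in the leaf $\mathcal{L}_m$. For the reverse inclusion I would observe that every $X\in F^G$ is tangent to $P$ along $P$: the $G$-invariant part is tangent to the orbit type manifolds and the vertical part is tangent to the orbits $G\cdot p\subseteq M_{(H)}$, whence $X(p)\in T_pP$ for all $p\in P$. Since $P$ is embedded, the local flow of each such $X$ --- and therefore every element of the pseudogroup $A^G$ --- maps $P$ into $P$; as the leaf $\mathcal{L}_m$ is precisely the $A^G$-orbit of $m$, we obtain $\mathcal{L}_m\subseteq P$, and hence $\mathcal{L}_m=P$.

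The step I expect to be the crux is the fibrewise equality $\T(m)=T_mP$. Its easy half, $\T(m)\subseteq T_mP$, rests only on the already cited fact that $G$-invariant vector fields are tangent to the orbit type manifolds, but the surjectivity $T_mP\subseteq\T(m)$ genuinely needs two ingredients working together: the averaging construction, which realizes every $H$-fixed tangent vector at $m$ as the value of a $G$-invariant vector field, and the local product model $M_{(H)}\cong(G/H)\times V^H$ supplied by the Tube Theorem, which pins down $T_mP$. Once this is secured, the passage from fibres to connected leaves is routine and uses only maximality of leaves together with the embeddedness of the orbit type components.
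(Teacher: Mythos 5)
Your proof is correct, but it reaches the conclusion by a genuinely different route from the paper, at least for the inclusion $P\subseteq\mathcal{L}_m$. The paper also begins with the preceding lemma and proves $\mathcal{L}_m\subseteq P$ essentially as you do (flows of elements of $F^G$ preserve the connected components of the orbit type manifolds, argued there via $G$-equivariance of the flows and the projection to $\bar M$). For the reverse inclusion, however, the paper never computes $\T$ pointwise: it instead invokes the external fact that the strata of $\bar M$ are the orbits of the family of \emph{all} vector fields on $\bar M$ (minimality of the orbit type stratification, via \cite{Bierstone75} and \cite{LuSn08}), together with $\pi_*\left(\mx(M)^G\right)=\mx(\bar M)$, and then lifts a chain of flow lines from $\pi(P)$ back to $M$, correcting by a group element written as a product of exponentials. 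You replace this by the fibrewise identity $\T(p)=T_p(G\cdot p)+(T_pM)^{G_p}=T_pP$, obtained from the averaging construction (every $H$-fixed vector at $m$ is the value of a $G$-invariant vector field) and the local model $M_{(H)}\cap U\cong (G/H)\times B^H$ from the Tube Theorem, and then conclude by maximality of Stefan--Sussmann leaves. Both arguments are sound; yours is more local and self-contained (it does not need the minimality of the stratification of $\bar M$) and yields the useful pointwise description of $\T$ as a by-product, while the paper's argument avoids the tube computation entirely by exploiting structure already established on the quotient. Your route is in fact close in spirit to the ``alternative proof'' the authors sketch immediately after the theorem via Theorem 3.5.1 of \cite{OrRa04}. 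The only place you might add a line is the appeal to maximality: since the sections of $\T$ restrict to vector fields on $P$ spanning $T_pP$ at every point, the $A^G$-orbit of $m$ intersected with $P$ is open and closed in the connected manifold $P$, which gives $P\subseteq\mathcal{L}_m$ without having to quote the general structure theory.
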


\begin{proof}
Let $N$ be the $A^G$-orbit through the point $m\in M$. We have to show that
$N=P$, where $P$ is the connected component through $m$ of the orbit type manifold
$M_{(G_m)}$.  Let $m'\in N$. Then there exist vector fields
$X_1,\ldots,X_l\in\mx(M)^G$ and $V_1,\ldots,V_l\in\Gamma(\V)$ such that
$$m'=\left(\phi^{V_1}_{t_1}\circ\ldots\circ \phi^{V_k}_{t_k}\circ
\phi^{X_1}_{s_1}\circ\ldots\circ \phi^{X_l}_{s_l}\right)(m)$$ (recall  that the flows of the
$G$-invariant vector fields commute with the flows of the sections of $\V$).
Hence, we can assume without loss of generality that $m'=\phi^X_t(m)$ with a
vector field $X\in\mx(M)^G$ or $m'=\phi^V_t(m)$ with $V$ a section of $\V$. 
In the first case, the vector field $X$ pushes down to a vector field $\bar X$
on $\bar M$. Let $\phi^{\bar X}_t$ be the flow of the vector field $\bar
X$. Since the strata of $\bar M$ are the accessible sets of the vector fields
on $\bar M$, the points $\pi(m')=\left(\pi\circ \phi^X_t\right)(m)=\phi^{\bar X}_t(\pi(m))$ and 
$\pi(m)$
lie in the same stratum of $\bar M$, hence in $\pi(P)$. Since $X$ is
$G$-invariant, 
its flow is also $G$-equivariant. Thus, we have
$\phi_t^X(gm)=g\cdot\phi_t^X(gm)$ 
for all $t$ where it is defined, and hence $\phi_t^X(m)\in M_{(G_m)}$ for all
$t$. 
This yields that  $m$ and
$\phi^X_t(m)$ can be joined by a smooth path in $M_{(G_m)}$, and consequently
that 
they are in the same connected
component of $M_{(G_m)}$, that is, the connected component
$P$. But since $\pi(m')=\pi(\phi^X_t(m))$, there exists $g\in G$ such that
$m'=\Phi_g(\phi^X_t(m))$ and since the action of $G$ on $M$ restricts to $P$, the
point $m'$ is also an element of $P$. 
In the second case we have $m'=\phi_t^V(m)$ with $V\in \Gamma(\V)$. Since the
vector field $V$ is tangent to the $G$-orbits, its integral curve through $m$
lies entirely in the connected component of the orbit type manifold through $m$ and we
are finished.

For the other inclusion, let $m'$ be a point of $P$. We then have $\pi(m)$ and
$\pi(m')\in \pi(P)$, a stratum of $\bar M$. Thus, we can assume without loss
of generality that $\pi(m')=\phi_t^{\bar X}(\pi(m))$ for some vector field $ \bar
X\in\mx(\bar M)$ (in reality, $\pi(m)$ and $\pi(m')$ can be joined by finitely
many such curves).  Let $X\in\mx(M)^G$  be such that $X\sim_\pi \bar X$ and let
$\phi_t^X$ be its flow. Then we have $\left(\pi\circ \phi_t^X\right)(m)=\phi_t^{\bar
  X}(\pi(m))=\pi(m')$. Thus, there exists $g\in G$ such that
$\Phi_g(\phi_t^X(m))=m'$. But since $G$ is connected, we find finitely many
elements $\xi^1,\ldots,\xi^l\in\lie g$ such that
$g=\exp(\xi^1)\cdot\ldots\cdot\exp(\xi^l)$, and hence we have
\[m'=\left(\Phi_{\exp(\xi^1)}\circ\ldots\circ\Phi_{\exp(\xi^l)}\circ \phi_t^X\right)(m).\]
The curves $\Phi_{\exp(t\xi^i)}:[0,1]\to M$, $i=1,\ldots,l$,  are segments of integral
curves of the sections $\xi^i_M$ of $\V$, and
$\Phi_{\exp(\xi^1)}\circ\ldots\circ\Phi_{\exp(\xi^l)}\circ \phi_t^X$ is
consequently an element of $A^G$. From this it follows that $m'\in N$. 
\end{proof}

An alternative proof of this theorem is based on Theorem 3.5.1 (stating that
the distribution $\T_G$ spanned by $G$-invariant vector fields is integrable
with leaves the connected components of the isotropy type manifolds), Proposition
3.4.6 in  \cite{OrRa04}, 
and the fact that $G\cdot M_H^m=M_{(H)}^m$, where $ M_H^m $ 
(respectively $ M_{(H)}^m $) is the connected 
component of $ M_H$ (respectively $ M_{(H)}$) containing $m $.

\medskip

Let $P$ be a stratum of $M$, that is, a leaf of the distribution $\T$.
Since $M$ is paracompact, there exists a $G$-invariant Riemannian metric
$\rho$ on $M$ (see, for example, \cite{DuKo00}). Consider the vector bundle 
$TP=\mathcal{T}\an{P}\subseteq
TM\an{P}$ over $P$, and let $TP^\perp$ be a $G$-invariant orthogonal
complement of $TP$ viewed as a subbundle of  $TM\an{P}$. We can describe 
the codistribution
$TP^\circ$ in the following way:
\[TP^\circ(p)=\{\ip{X}\rho(p)\mid X\in\Gamma(TP^\perp)\}\]
for all $p\in P$. Note that the Riemannian metric $\rho$ allows an
identification of the tangent bundle of $M$ with the cotangent bundle via 
\[X\in\mx(M)\leftrightarrow \ip{X}\rho\in\Omega^1(M).\]
The section $\ip{X}\rho$ is $G$-invariant if and only if $X$ is $G$-invariant.
We will use this in the proof of many of the following propositions and lemmas.

\medskip

In the following, we will make use of the codistribution $\V^\circ_G$ defined
as the span of the $G$-invariant sections of $\V^\circ$:
\[\V^\circ_G(x)=\{\alpha_x\mid \alpha\in\Gamma(\V^\circ)^G\}\]
for all $x\in M$. 

This codistribution is in fact spanned by the exterior derivative of all
$G$-invariant functions on $M$, 
as stated in the following lemma.
\begin{lemma}\label{description_V_G}
The codistribution $\V^\circ_G$ can be described as follows: for each $m\in M$ we have
\[\V^\circ_G(m)=\erz\{\dr f(m)\mid f\in C^\infty(M)^G\}.\]
\end{lemma}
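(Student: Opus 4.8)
The plan is to prove the two inclusions separately; the inclusion $\erz\{\dr f(m)\mid f\in C^\infty(M)^G\}\subseteq\V^\circ_G(m)$ is immediate, while the reverse inclusion rests on the realization of one-forms on $\bar M$ as K\"ahler differentials (Propositions \ref{corres-one-forms} and \ref{matsu}) together with a short argument using a $G$-invariant Riemannian metric. For the easy direction, given $f\in C^\infty(M)^G$, the $G$-invariance of $f$ gives $\dr f(\xi_M)=\xi_M(f)=0$ for every $\xi\in\lie g$, so $\dr f\in\Gamma(\V^\circ)$; moreover $\Phi_g^*\dr f=\dr(\Phi_g^*f)=\dr f$, so $\dr f\in\Gamma(\V^\circ)^G$ and therefore $\dr f(m)\in\V^\circ_G(m)$.

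For the reverse inclusion I would take an arbitrary $\alpha\in\Gamma(\V^\circ)^G$ defined on a $G$-invariant neighborhood of $m$; since $\V^\circ_G(m)$ is spanned by the values $\alpha(m)$, it suffices to show $\alpha(m)\in\erz\{\dr f(m)\mid f\in C^\infty(M)^G\}$. By Proposition \ref{corres-one-forms} the push-forward $\pi_*\alpha$ is a one-form on $\bar M$, hence a K\"ahler differential, so by Proposition \ref{matsu} I may write $\pi_*\alpha=\sum_{j=1}^k\bar g_j\,\dr\bar f_j$ with $\bar g_j,\bar f_j\in C^\infty(\bar M)$. Setting $f_j:=\pi^*\bar f_j$ and $g_j:=\pi^*\bar g_j$, both in $C^\infty(M)^G$, I form $\tilde\alpha:=\sum_{j=1}^k g_j\,\dr f_j\in\Gamma(\V^\circ)^G$. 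By the computation preceding Proposition \ref{corres-one-forms}, $\pi_*\tilde\alpha$ corresponds to the same K\"ahler differential $\sum_j\bar g_j\dr\bar f_j$, so $\beta:=\alpha-\tilde\alpha\in\Gamma(\V^\circ)^G$ satisfies $\pi_*\beta=0$. Equivalently, for every $X\in\mx(M)^G$ the $G$-invariant function $\beta(X)$ pushes forward to $(\pi_*\beta)(\pi_*X)=0$, and since $\pi^*$ is injective this forces $\beta(X)=0$ for all $G$-invariant $X$.

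The crux, and the step I expect to be the main obstacle, is concluding from $\pi_*\beta=0$ that $\beta=0$ rather than merely that $\beta(m)$ annihilates the tangent space to the orbit type stratum. Here I would invoke a $G$-invariant Riemannian metric $\rho$ on the paracompact manifold $M$ and the identification $X\leftrightarrow\ip{X}\rho$, under which $\ip{X}\rho$ is $G$-invariant exactly when $X$ is. Since $\beta$ is $G$-invariant and lies in $\V^\circ$, there is a $G$-invariant vector field $Y$, orthogonal to $\V$, with $\beta=\ip{Y}\rho$. The point is that $Y$ is itself an admissible test field: feeding it into the annihilation condition gives $0=\beta(Y)=\rho(Y,Y)$, and positive-definiteness of $\rho$ forces $Y=0$, hence $\beta=0$. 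Therefore $\alpha=\tilde\alpha=\sum_{j=1}^k g_j\,\dr f_j$, so that $\alpha(m)=\sum_{j=1}^k g_j(m)\,\dr f_j(m)\in\erz\{\dr f(m)\mid f\in C^\infty(M)^G\}$, which establishes the remaining inclusion and completes the proof. Note this simultaneously yields the representation $\alpha=\sum_j g_j\,\dr f_j$ with $g_j,f_j\in C^\infty(M)^G$ promised in the discussion preceding Proposition \ref{corres-one-forms}, and that no circularity arises since Proposition \ref{corres-one-forms} only used the surjectivity direction of that correspondence.
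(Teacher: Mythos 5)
Your easy inclusion is fine and matches the paper. The hard inclusion, however, is circular at the decisive step. You invoke Proposition \ref{corres-one-forms} to assert that for an \emph{arbitrary} $\alpha\in\Gamma(\V^\circ)^G$ the push-forward $\pi_*\alpha$ is a K\"ahler differential, i.e.\ of the form $\sum_j\bar g_j\dr\bar f_j$. But the discussion preceding Proposition \ref{corres-one-forms} only establishes the opposite direction: every K\"ahler differential $\sum_j\bar g_j\dr\bar f_j$ is realized as the push-forward of the specific section $\sum_j\pi^*\bar g_j\,\dr(\pi^*\bar f_j)$. The direction you need --- that every element of $\Gamma(\V^\circ)^G$ pushes forward to a K\"ahler differential --- is stated there with an explicit forward reference to Lemma \ref{description_V_G} itself (``we will see later that each element $\alpha\in\Gamma(\V^\circ)^G$ can be written as a sum $\sum_j g_j\dr f_j$ \dots''), and the paper even warns that this is non-trivial: not every smooth $C^\infty(\bar M)$-linear functional on $\mx(\bar M)$ is a K\"ahler differential. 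So your closing remark that ``no circularity arises'' has the two directions of the correspondence reversed; the one you use is exactly the one that depends on the lemma being proved.

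The step you flagged as the expected obstacle ($\pi_*\beta=0\Rightarrow\beta=0$ via a $G$-invariant metric and testing $\beta$ against its own metric dual) is actually sound and is the same device the paper uses for injectivity in Lemma \ref{V_circ_G}. The genuine missing ingredient is elsewhere: the paper's proof reduces the statement to the pointwise identity $\left((T_mGm)^{\ann}\right)^{G_m}=\erz\{\dr f(m)\mid f\in C^\infty(M)^G\}$ (Theorem 2.5.10 of \cite{OrRa04}), which ultimately rests on the slice theorem and Schwarz's theorem that $G_m$-invariant smooth functions on a slice are smooth functions of a Hilbert basis of invariant polynomials. Some input of this kind is unavoidable, and your argument never supplies it; to repair the proof you would either have to prove directly that $\pi_*\alpha$ satisfies the K\"ahler-differential condition (which amounts to the same local normal-form analysis) or follow the paper and sandwich $\V^\circ_G(m)$ between $\erz\{\dr f(m)\mid f\in C^\infty(M)^G\}$ and $\left((T_mGm)^{\ann}\right)^{G_m}$.
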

\begin{proof}
We use the identity $\left((T_mGm)^{\ann}\right)^{G_m}=\erz\{\dr
f(m)\mid f\in C^\infty(M)^G\}$ (see \cite{OrRa04}, Theorem 2.5.10), where 
\[(T_mGm)^{\ann} :  
= \{ \alpha_m \in T ^\ast_mM\mid \alpha_m( \xi_M(m)) = 0\; \text{for all} \;
\xi\in \mathfrak{g}\}\] is the pointwise annihilator of the tangent space $T
_mGm $ to the orbit $Gm $. We show 
\[\erz\{\dr f(m)\mid f\in C^\infty(M)^G\}\subseteq
\V^\circ_G(m)\subseteq\left((T_mGm)^{ \ann}\right)^{G_m},\]
and our claim follows from the equality above. The first inclusion is easy
since for each function $f\in C^\infty(M)^G$, we have $\dr f\in\Gamma(\V^\circ)^G$.
For the second inclusion, choose $\alpha(m)\in\V^\circ_G(m)$, with $\alpha$ a
$G$-invariant section of $\V^\circ$. Then we have $\alpha(\xi_M)=0$ for all
$\xi\in\lie g$ and hence $\alpha(m)(\xi_M(m))=0$ for all $\xi\in\lie g$, that
is, $\alpha(m)\in(T_mGm)^{\ann}$. Since $\alpha$ is
$G$-invariant, we have $\Phi_h^*\alpha=\alpha$ for all $h\in G_m\subseteq G$
and hence, for all $v\in T_mM$ we get
\[\alpha_m(T_m\Phi_hv)=\alpha_{\Phi_h(m)}(T_m\Phi_hv)
=(\Phi_h^*\alpha)_m(v)=\alpha_m(v),\]
where we have used that $h\cdot m=m$ since $h\in G_m$. Hence we have
$(T_m\Phi_h)^*(\alpha(m))=\alpha(m)$ for all $h\in G_m$ and hence 
$\alpha(m)\in\left((T_mGm)^{ \operatorname{ann}}\right)^{G_m}$.
\end{proof}

Using this, we can show the following lemma.
\begin{lemma}\label{V_circ_G} 
Let $P$ be a stratum of $M$, and $\V_P$ the vertical space of the induced
action of $G$ on $P$.
We have the equality
\[\iota_P^*(\V^\circ_G)=(\V_P)^\circ\subseteq T^*P.\]
Hence, the map 
$\iota_P^*:\V^\circ_G\an{P}\to (\V_P)^\circ$
is an isomorphism. Thus, $\V^\circ_G\an{P}$ is a vector bundle over $P$ and 
\[\left(\V^\circ_G\an{P}\right)^\circ=\V\an{P}\oplus TP^\perp.\]
\end{lemma}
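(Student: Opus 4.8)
The plan is to reduce both sides of $\iota_P^*(\V^\circ_G)=(\V_P)^\circ$ to pointwise spans of differentials, to read off the injectivity of $\iota_P^*$ from the isotropy representation at a point of $P$, and finally to obtain the bundle and annihilator statements by a dimension count. Throughout I fix $p\in P$ and write $H:=G_p$; recall that on $P$ the action has conjugate isotropy subgroups, so $\V_P=\ker T\pi_P$ has constant rank and is a subbundle of $TP$, and $\pi_P\colon P\to\bar P$ is a surjective submersion onto the manifold $\bar P$.

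First I would record the two pointwise descriptions. By Lemma \ref{description_V_G} and $\iota_P^*\dr f=\dr(\iota_P^*f)$ with $\iota_P^*f\in C^\infty(M)^G$, the left-hand side is $\iota_P^*(\V^\circ_G)(p)=\erz\{\dr(\iota_P^*f)(p)\mid f\in C^\infty(M)^G\}$. Since $\pi_P$ is a submersion onto a manifold, $(\V_P)^\circ(p)=(T_p\pi_P)^*(T^*_{\pi_P(p)}\bar P)=\erz\{\dr h(p)\mid h\in C^\infty(P)^G\}$, the analogue on $P$ of Lemma \ref{description_V_G} (here $G$-invariant functions on $P$ are exactly the pullbacks of functions on $\bar P$). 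The inclusion $\iota_P^*(\V^\circ_G)\subseteq(\V_P)^\circ$ is then immediate. For the reverse inclusion I would extend a given $h\in C^\infty(P)^G$ near $p$: writing $h=\pi_P^*\bar h$, I use that $\bar P$ is a stratum of $\bar M$ to extend $\bar h$ locally to $\bar f\in C^\infty(\bar M)$ and set $f:=\pi^*\bar f\in C^\infty(M)^G$; since $\pi\circ\iota_P=\iota_{\bar P}\circ\pi_P$ (Proposition \ref{equ_of_smooth_structures}), this gives $\iota_P^*f=h$ near $p$, hence $\dr h(p)\in\iota_P^*(\V^\circ_G)(p)$. This is the $G$-invariant extension argument already used in Proposition \ref{equ_of_smooth_structures}.

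The crux is the injectivity of $\iota_P^*$ on $\V^\circ_G\an{P}$, which promotes this set equality to a fibrewise isomorphism; I expect this representation-theoretic step to be the main obstacle, as it is where the precise shape of $P$ as an orbit type manifold enters. From the proof of Lemma \ref{description_V_G} I take the sharper identity $\V^\circ_G(p)=((T_pGp)^{\ann})^{H}$, and I note that any element of $\erz\{\dr f(p)\}$ equals $\dr f(p)$ for a single $f\in C^\infty(M)^G$. Suppose $\iota_P^*\dr f(p)=\dr(\iota_P^*f)(p)=0$, i.e.\ $\dr f(p)\in(T_pP)^{\ann}$; as $f$ is $G$-invariant, $\dr f(p)$ is $H$-invariant, so $\dr f(p)\in((T_pP)^{\ann})^{H}$. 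Now I would invoke the standard description $T_pP=T_p(Gp)\oplus(T_pM/T_p(Gp))^{H}$ of the tangent space to the orbit type manifold (see \cite{OrRa04}), which yields $(T_pM/T_pP)^{H}=0$ by exactness of the $H$-invariants functor for the compact group $H$. Dualizing gives $((T_pP)^{\ann})^{H}\cong((T_pM/T_pP)^{H})^*=0$, so $\dr f(p)=0$, and $\iota_P^*\colon\V^\circ_G\an{P}\to(\V_P)^\circ$ is a fibrewise isomorphism.

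It remains to deduce the bundle structure and the annihilator formula. Since $\iota_P^*$ is a fibrewise isomorphism onto the subbundle $(\V_P)^\circ$, the smooth distribution $\V^\circ_G\an{P}$ has constant rank $\dim\bar P$ and hence is a vector subbundle of $T^*M\an{P}$. For a subbundle the smooth annihilator coincides with the pointwise one (Proposition \ref{point_smooth_annihi} and the subbundle discussion following it), so it suffices to check $(\V^\circ_G(p))^{\ann}=\V(p)\oplus TP^\perp(p)$ in $T_pM$. I would instead verify the dual statement $\V^\circ_G(p)=(\V(p)\oplus TP^\perp(p))^{\ann}$ and conclude by double annihilation. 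The inclusion ``$\subseteq$'' runs as in the previous paragraph: an element of $((T_pGp)^{\ann})^{H}$ annihilates $\V(p)=T_pGp$ and, being $H$-invariant, restricts to an $H$-invariant covector on $TP^\perp(p)\cong T_pM/T_pP$, which vanishes since $(T_pM/T_pP)^{H}=0$. Equality then follows from the dimension count $\dim\V^\circ_G(p)=\dim T_pP-\dim T_pGp=\dim(\V(p)\oplus TP^\perp(p))^{\ann}$, using $\V(p)\subseteq T_pP$ and that $TP^\perp(p)$ is complementary to $T_pP$. This gives $(\V^\circ_G\an{P})^\circ=\V\an{P}\oplus TP^\perp$.
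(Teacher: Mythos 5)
Your proof is correct, but it reaches the two nontrivial parts of the lemma by a genuinely different route than the paper. For surjectivity the paper invokes Lemma \ref{Vzero} (that $(\V_P)^\circ$ is spanned by its $G$-invariant sections) together with Lemma \ref{description_V_G} applied to the $G$-action on $P$, and then extends invariant functions from $P$ to $M$; you instead identify $(\V_P)^\circ$ pointwise with $\operatorname{im}(T_p\pi_P)^*$ using that $\V_P$ is a subbundle and $\pi_P$ a submersion onto the manifold $\bar P$, which bypasses Lemma \ref{Vzero} entirely, and you extend through $\bar M$ rather than through $M$ --- an equivalent extension step. The real divergence is in the injectivity and the annihilator formula: the paper's mechanism is the $G$-invariant metric $\rho$, converting $\alpha\in\Gamma(\V^\circ)^G$ into a $G$-invariant vector field, using that such fields are tangent to $P$ and that $\iota_P^*\rho$ is nondegenerate, and later decomposing $X=X^\top+X^\perp$ and showing $X^\top\in\Gamma(\V\an{P})$; your mechanism is the isotropy representation, namely $T_pP=T_p(Gp)\oplus N_p^H$ forces $(T_pM/T_pP)^H=0$, whence $\bigl((T_pP)^{\ann}\bigr)^H=0$ by exactness of $H$-invariants for the compact group $H$, and the annihilator identity then follows from a dimension count plus double annihilation. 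Your version makes the representation-theoretic reason for the vanishing explicit and is purely fibrewise, at the cost of invoking the local model for $T_pM_{(H)}$; the paper's version stays at the level of smooth sections and reuses the same averaging-plus-metric trick (``a $G$-invariant section of $TP^\perp$ is tangent to $P$, hence zero'') that reappears verbatim in the proof of Theorem \ref{singred}, so it integrates more tightly with the rest of the argument. Both proofs still need the $G$-invariant metric to define $TP^\perp$ and the fact that annihilators of subbundles are computed pointwise.
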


For the proof of this we will need the following lemma.
\begin{lemma}\label{Vzero}
If the action of a Lie group $G$ on a manifold $M$ is with conjugated isotropy
subgroups, then the  (smooth) annihilator $\V^\circ$ of the vertical bundle $\V$
is spanned by its $G$-invariant sections.
\end{lemma}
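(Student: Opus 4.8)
The plan is to prove the pointwise identity $\V^\circ=\V^\circ_G$, which is exactly the assertion that $\V^\circ$ is spanned by its $G$-invariant sections: the inclusion $\V^\circ_G\subseteq\V^\circ$ is automatic, and since $\V^\circ_G(m)$ is already a linear subspace of $T^*_mM$ (being the set of values at $m$ of a $C^\infty(M)^G$-module of sections), ``spanned'' reduces to the reverse pointwise inclusion $\V^\circ(m)\subseteq\V^\circ_G(m)$ for every $m\in M$. First I would cash in the hypothesis at the level of ranks: conjugate isotropy subgroups have equal dimension, so every orbit $G\cdot m\cong G/G_m$ has the same dimension, whence $\V(m)=T_m(G\cdot m)$ has constant rank and $\V$ is a $G$-invariant vector subbundle of $TM$. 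By the fact recorded after Proposition \ref{point_smooth_annihi} (for a smooth subbundle $\T$ one has $\T^\circ=\T^{\ann}$, again a subbundle), $\V^\circ$ is then a $G$-invariant subbundle of $T^*M$ coinciding with the pointwise annihilator of $\V$.

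The cleanest route now uses the quotient as a manifold. Under the single-orbit-type hypothesis the orbit space $\bar M=M/G$ is a smooth manifold and $\pi\colon M\to\bar M$ is a surjective submersion (the same structure used elsewhere for $P/G$). Since the fibres of $\pi$ are precisely the orbits, $\ker T_m\pi=T_m(G\cdot m)=\V(m)$, and surjectivity of $T_m\pi$ gives $\V^\circ(m)=(\ker T_m\pi)^\circ=\operatorname{im}\bigl((T_m\pi)^*\bigr)$. Thus any $\beta\in\V^\circ(m)$ equals $(T_m\pi)^*\gamma$ for some $\gamma\in T^*_{\pi(m)}\bar M$. Choosing a local one-form $\bar\gamma$ on the manifold $\bar M$ with $\bar\gamma(\pi(m))=\gamma$, the pullback $\pi^*\bar\gamma$ is $G$-invariant because $\pi\circ\Phi_g=\pi$; it is a section of $\V^\circ$ since its value at any $m'$ is $(T_{m'}\pi)^*\bar\gamma(\pi(m'))\in\operatorname{im}\bigl((T_{m'}\pi)^*\bigr)=\V^\circ(m')$; and it satisfies $(\pi^*\bar\gamma)(m)=(T_m\pi)^*\gamma=\beta$. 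This exhibits $\beta\in\V^\circ_G(m)$ and closes the argument.

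An alternative that stays entirely within the averaging toolkit of Section \ref{tubeth} is worth recording, as it isolates the conceptual core. There one shows $\V^\circ_G(m)=\bigl(\V^\circ(m)\bigr)^{G_m}$ by averaging an arbitrary local section of $\V^\circ$ over the compact isotropy group on a tube: the average lands in $\V^\circ$ because $\Phi_h$ preserves $\V$ via $T\Phi_h\circ\xi_M=(\mathrm{Ad}_h\xi)_M\circ\Phi_h$, and its value at $m$ is the $G_m$-average of the starting covector. One then must prove that $G_m$ acts trivially on $\V^\circ(m)$; since $\V^\circ(m)$ is $G_m$-equivariantly dual to the slice representation $T_mM/\V(m)$, this amounts to the slice representation being trivial. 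I expect this slice-triviality step (equivalently, the submersion structure used in the main route) to be the main obstacle, as it is exactly where the hypothesis is essential: for $b$ in a slice the isotropy $(G_m)_b\subseteq G_m$ must be conjugate in $G$ to $G_m$, and a compact group cannot be conjugate to a proper subgroup of itself, forcing $(G_m)_b=G_m$, so that $G_m$ fixes every slice vector. Without the single-orbit-type assumption $\V$ need not be a subbundle, the averages fail to realise all values, and $\V^\circ$ can genuinely fail to be generated by its invariant sections.
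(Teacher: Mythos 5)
Your proposal is correct, but note that the paper itself contains no proof of Lemma \ref{Vzero}: it is stated with a pointer to the reference [JoRa09b], so there is no in-text argument to compare against. Judged on its own merits, your main route goes through cleanly: conjugated isotropy subgroups force all orbits to have the same dimension, so $\V$ is a $G$-invariant subbundle and $\V^\circ=\V^{\ann}$ is a subbundle; for a proper action with a single orbit type, $\bar M=M/G$ is a smooth manifold with $\pi$ a surjective submersion (exactly the fact the paper invokes for $P/G$ in Section \ref{tubeth}), whence $\V^\circ(m)=(\ker T_m\pi)^{\ann}=\operatorname{im}\bigl((T_m\pi)^*\bigr)$, and pulling back a local one-form $\bar\gamma$ on $\bar M$ produces a $G$-invariant local section of $\V^\circ$ hitting any prescribed value --- this settles the nontrivial pointwise inclusion. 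This quotient-manifold argument is arguably more structural and shorter than what the citation trail suggests: the techniques of [JoRa09b] are the tube-theorem/averaging tools that the present paper develops in Section \ref{tubeth}, i.e., essentially your alternative route. That second sketch is also sound, and you correctly isolate its crux: averaging a local section of $\V^\circ$ over the compact isotropy group yields the $G_m$-average of the starting covector, so one must show the slice representation is trivial, which follows from the single-orbit-type hypothesis together with the fact that a compact Lie group cannot be conjugate to a proper subgroup of itself (same dimension and same number of components force equality). The only unsubstantiated claim is the closing assertion that $\V^\circ$ can \emph{genuinely} fail to be invariantly generated without the hypothesis; no example is given, but nothing in the proof depends on it, and indeed [JoRa09b] is devoted to precisely this question in the general case.
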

\begin{proof}
Since the action of $G$ on $M$ is with conjugated isotropy subgroups,
the vertical space $\V$ is a smooth integrable subbundle of $TM$. Thus,
for each $p\in M$, we find a coordinate neigborhood $U$ of $p$ with
coordinates $(x_1,\ldots,x_n)$ such that
$\V$ is spanned by $\partial_{x_1},\ldots,\partial_{x_k}$, where 
$k=\dim G-\dim G_p=\dim \V$. The annihilator $\V^\circ $
of $\V$ is then spanned by $\dr x_{k+1},\ldots, \dr x_n$ on $U$.
Since $\dr x_{k+1},\ldots, \dr x_n$ vanish on $\V$, and $G$ is connected,
the functions $x_{k+1},\ldots,x_n$ are then $G$-invariant and we get
$\dr x_{k+1},\ldots, \dr x_n\in\Gamma(\V^\circ)^G$.
\end{proof}

\begin{proof}[Proof of Lemma \ref{V_circ_G}]
The inclusion
$\iota_P^*(\V^\circ_G)\subseteq(\V_P)^\circ$ is easy. For the
other inclusion, note that since all isotropy type manifolds of the action of $G$ on
$P$ are conjugated, the
codistribution $(\V_P)^\circ$ is spanned by its $G$-invariant sections by
Lemma \ref{Vzero}. Therefore, by Lemma
\ref{description_V_G}, each $G$-invariant section of $(\V_P)^\circ$ is in the
$C^\infty(P)^G$-span of $\{\dr f\mid f\in C^\infty(P)^G\}$.  Hence, each element
$\tilde \alpha(p)\in(\V_P)^\circ(p)$ can be written as $\tilde \alpha(p)
=\sum_{j=1}^k\tilde f_j(p)\dr
\tilde g_j\an{p}$ with $\tilde f_j,\tilde g_j\in C^\infty(P)^G$. Choose
$f_j,g_j\in C^\infty(M)$ such that $\tilde f_j=\iota_P^*f_j$ and $\tilde
g_j=\iota_P^*g_j$ for $j=1,\ldots,k$. Without loss of generality, the
functions $f_1,\ldots,f_k,g_1,\ldots,g_k$ are $G$-invariant (otherwise, their
$G$-invariant averages will also restrict to $\tilde f_1,\ldots,\tilde
f_k,\tilde g_1,\ldots,\tilde g_k$).
Set $\alpha=\sum_{j=1}^kf_j\dr g_j\in\Gamma(\V^\circ)^G$. Then we have 
\begin{align*}
(\iota_P^*\alpha)(p)&=\left(\iota_P^*\left(\sum_{j=1}^kf_j\dr
    g_j\right)\right)(p)
=\sum_{j=1}^k(\iota_P^*f_j)(p)\dr(\iota_P^*g_j)\an{p}\\
&=\sum_{j=1}^k\tilde f_j(p)\dr
\tilde g_j\an{p}=\tilde\alpha(p),
\end{align*}
and the proof of the first assertion is complete, since we have shown that 
$\tilde\alpha(p)=(\iota_P^*\alpha)(p)\in(\iota^*_P(\V^\circ_G))(p)$.

\medskip

From this it follows that the map $\iota_P^*:\V^\circ_G\an{P}\to (\V_P)^\circ$ is
surjective. For the injectivity, let $\alpha\in\Gamma(\V^\circ)^G$ be defined on
a neighborhood of $p\in P$ and such that $\iota_P^*\alpha=0$. The vector
field $X\in\mx(M)$ satisfying $\ip{X}\rho=\alpha$ is $G$-invariant and hence
tangent to $P$ on $P$. Therefore, there exists $\tilde X\in\mx(P)$ with
$\tilde X\sim_{\iota_P}X$ and we have
$\iota_P^*\alpha=\iota_P^*\ip{X}\rho=\ip{\tilde X}\iota_P^*\rho$. But since
$\iota_P^*\alpha=0$ we get $\tilde X=0$ using the fact that $\iota_P^*\rho$ is a 
Riemannian metric on $P$. Hence, we have shown that $\alpha\an{P}=0$.

\medskip

It remains to show the equality
\[(\V_G^\circ\an{P})^\circ=\V\an{P}\oplus TP^\perp.\]
Since $\V\an{P}\subset TP\subset TM\an{P}$, we have $\V\an{P}\cap TP^\perp=0_P$.
First let $X\in\Gamma(TP^\perp)$, $V\in\Gamma(\V)$ and
$\alpha\in\Gamma(\V^\circ)^G$. Then we have $\alpha=\ip{Y}\rho$ with
$Y\in\mx(M)^G$ and hence 
\begin{align*}
\alpha\an{P}(X+V\an{P})&=\rho\an{P}(Y\an{P},X+V\an{P})
=\rho\an{P}(Y\an{P},X)+\rho\an{P}(Y\an{P},V\an{P})\\
&=\rho\an{P}(Y\an{P},X)+\alpha(V)\circ\iota_P=0,
\end{align*}
since $Y$ is tangent to $P$ on $P$, that is, $Y\an{P}\in\Gamma(TP)$.

Now choose $X\in\Gamma((\V_G^\circ\an{P})^\circ)\subseteq\Gamma(TM\an{P})$ and
write $X=X^\top+X^\perp$ with $X^\top\in\Gamma(TP)$ and
$X^\perp\in\Gamma(TP^\perp)$. Choose an arbitrary
$\alpha\in\Gamma(\V^\circ)^G$. Then $\alpha=\ip{Y}\rho$ with
$Y\in\mx(M)^G$. Again, $Y$ is tangent to $P$ on $P$ and we compute
\begin{align*}
0&=\alpha\an{P}(X)=\alpha\an{P}(X^\top+X^\perp)=\rho\an{P}(Y\an{P},X^\top+X^\perp)\\
&=\rho\an{P}(Y\an{P},X^\top)+\rho\an{P}(Y\an{P},X^\perp)=\rho\an{P}(Y\an{P},X^\top).
\end{align*}
Thus, we have $X^\top\in\Gamma((\V_G^\circ\an{P})^\circ)$.
 Since $X^\top\in\Gamma(TP)$, there exists $X\in\mx(M)$ with $X\an{P}=X^\top$ and
 $\tilde X\in\mx(P)$ with $\tilde X\sim_{\iota_P}X$.  For each section
 $\tilde\alpha\in\Gamma(\V_{P}^\circ)=\iota_P^*(\Gamma(\V^\circ_G))$, we
 have $\tilde\alpha=\iota_P^*\alpha$ with $\alpha\in\Gamma(\V^\circ_G)$ and 
\[\tilde\alpha(\tilde X)=\alpha(X)\circ\iota_P=0.\]
But then $\tilde X\in\Gamma(\V_P)$, which leads to $X^\top\in\Gamma(\V\an{P})$.
\end{proof}

With analogous methods as in the proof of the first part of the last lemma, we
can show the following proposition.
\begin{proposition}\label{one-forms}
Each local one-form on $\bar P$ is the restriction to $\bar P$ 
of a local one-form on $\bar M$.
\end{proposition}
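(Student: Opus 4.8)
The plan is to adapt the argument from the first part of the proof of Lemma~\ref{V_circ_G}: I will transport a local one-form on $\bp$ down to a $G$-invariant section of $(\V_P)^\circ$ on $P$, lift it to a $G$-invariant section of $\V^\circ$ on $M$, and then push the latter forward to a one-form on $\bar M$ whose restriction to $\bp$ recovers the original. Recall from Proposition~\ref{equ_of_smooth_structures} that, $P$ being a connected component of an orbit type manifold, the $G$-action on $P$ has conjugated isotropy groups and $\bp=\pi(P)$ is identified with the quotient manifold $P/G$; write $\pi_P\colon P\to\bp$ for the quotient submersion, so that $\pi\circ\iota_P=\iota_\bp\circ\pi_P$. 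Starting from a local one-form $\bar\alpha_\bp\in\Omega^1(\bp)$ near a point $\bar p\in\bp$, I would first pull it back to $\tilde\alpha:=\pi_P^*\bar\alpha_\bp\in\Omega^1(P)$. Since $\pi_P$ is a $G$-invariant submersion with vertical kernel $\V_P$, the form $\tilde\alpha$ is $G$-invariant and annihilates $\V_P$, i.e. $\tilde\alpha\in\Gamma((\V_P)^\circ)^G$.

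Exactly as in the proof of Lemma~\ref{V_circ_G}, using Lemma~\ref{Vzero} (the annihilator of the vertical space of an action with conjugated isotropy is spanned by its $G$-invariant sections) together with Lemma~\ref{description_V_G} applied to $P$, I can write $\tilde\alpha$ locally as a finite sum $\tilde\alpha=\sum_{j=1}^k\tilde f_j\,\dr\tilde g_j$ with $\tilde f_j,\tilde g_j\in C^\infty(P)^G$. Choosing smooth extensions $f_j,g_j\in C^\infty(M)$ with $\iota_P^*f_j=\tilde f_j$, $\iota_P^*g_j=\tilde g_j$ and replacing them by their $G$-invariant averages---which still restrict to the already $G$-invariant $\tilde f_j,\tilde g_j$ on $P$---I may assume $f_j,g_j\in C^\infty(M)^G$. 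Setting $\alpha:=\sum_{j=1}^k f_j\,\dr g_j$ gives a section $\alpha\in\Gamma(\V^\circ)^G$ with $\iota_P^*\alpha=\tilde\alpha$. By Proposition~\ref{corres-one-forms}, $\alpha$ pushes forward to the one-form $\bar\alpha:=\pi_*\alpha=\sum_{j=1}^k\bar f_j\,\dr\bar g_j\in\Omega^1(\bar M)$, where $\bar f_j:=\pi_*f_j$ and $\bar g_j:=\pi_*g_j$ satisfy $\pi^*\bar f_j=f_j$, $\pi^*\bar g_j=g_j$.

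It remains to verify that the restriction $\iota_\bp^*\bar\alpha$ of $\bar\alpha$ to $\bp$ equals $\bar\alpha_\bp$, and I expect this bookkeeping step---keeping the four maps $\pi,\pi_P,\iota_P,\iota_\bp$ and the identification $\bp\cong P/G$ straight---to be the only delicate part. Using $\pi\circ\iota_P=\iota_\bp\circ\pi_P$ one finds $\pi_P^*(\iota_\bp^*\bar f_j)=\iota_P^*(\pi^*\bar f_j)=\iota_P^*f_j=\tilde f_j$, and similarly $\pi_P^*(\iota_\bp^*\bar g_j)=\tilde g_j$. Since pullback commutes with $\dr$ and with products, this gives $\pi_P^*\big(\iota_\bp^*\bar\alpha\big)=\sum_{j=1}^k\tilde f_j\,\dr\tilde g_j=\tilde\alpha=\pi_P^*\bar\alpha_\bp$. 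As $\pi_P$ is a surjective submersion, $\pi_P^*$ is injective on one-forms, whence $\iota_\bp^*\bar\alpha=\bar\alpha_\bp$, exhibiting the arbitrary local one-form $\bar\alpha_\bp$ on $\bp$ as the restriction of the one-form $\bar\alpha$ on $\bar M$. This completes the plan.
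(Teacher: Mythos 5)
Your proposal is correct and follows essentially the same route as the paper's proof: pull back the one-form on $\bar P$ via $\pi_P$ to a $G$-invariant section of $(\V_P)^\circ$, extend it (via the argument of Lemma \ref{V_circ_G}, i.e.\ Lemmas \ref{Vzero} and \ref{description_V_G}) to an element of $\Gamma(\V^\circ)^G$, push forward to $\bar M$, and conclude with $\pi\circ\iota_P=\iota_\bp\circ\pi_P$ and the injectivity of $\pi_P^*$. You merely spell out the intermediate representation $\sum_j f_j\,\dr g_j$ explicitly, which the paper leaves implicit by citing Lemma \ref{V_circ_G}.
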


\begin{proof}
Let $\alpha_\bp\in\Omega^1(\bp)$ and consider
$\pi^*_P\alpha_\bp\in\Omega^1(P)$. Hence, we have
$\pi^*_P\alpha_\bp\in\Gamma(\V_P^\circ)^G$, and we can find, as in the proof of
Lemma \ref{V_circ_G}, an element $\alpha$ of $\Gamma(\V^\circ)^G$ satisfying
$\iota_P^*\alpha=\pi^*_P\alpha_\bp$. The one-form $\alpha$ pushes forward to
$\bar\alpha\in\Omega^1(\bar M)$ and, with 
\[\pi^*_P\alpha_\bp=\iota_P^*\alpha=\iota_P^*\pi^*\bar\alpha
=\pi_P^*\iota_\bp^*\bar\alpha\]
and the fact that $\pi_P$ is a smooth surjective submersion,
we get the equality of $\alpha_\bp$ and $ \iota_\bp^*\bar\alpha$.
\end{proof}

\medskip

Our last two lemmas are rather technical. 
Let $E_P$ be the vector bundle
$E_P=TM\an{P}\operp T^*M\an{P}$ over $P$, endowed with
$\pairing_{E_P}=\pairing\an{E_P\times E_P}$.  Note that this pairing is
automatically symmetric 
and nondegenerate,
since these properties are satisfied pointwise. 
\begin{lemma}\label{sum}
If the intersection $D\cap (\T\operp\V^\circ_G)$ is smooth, then we have 
\[(D\an{P}\cap(\mathcal{T}\operp \V^\circ_G)\an{P})^\perp
=D\an{P}+\K\an{P}+(TP^\perp\operp TP^\circ)\]
as smooth generalized subdistributions of $E_P$ endowed with
$\pairing_{E_P}$.
\end{lemma}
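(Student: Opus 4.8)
The plan is to recognise the left-hand side as the smooth orthogonal of an intersection of two subbundles of $E_P$ and to apply Proposition \ref{prop_intersection}; the only genuine work is verifying that proposition's smoothness hypothesis, which is where the assumption on $M$ enters.

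First I would fix the two subbundles. Since $P$ is a leaf of $\T$ we have $\T\an{P}=TP$, so the object to be orthogonalised is $D\an{P}\cap B$ with $B:=(\T\operp\V^\circ_G)\an{P}=TP\operp(\V^\circ_G\an{P})$. By Lemma \ref{V_circ_G} the codistribution $\V^\circ_G\an{P}$ is a vector subbundle of $T^*M\an{P}$ whose (pointwise, hence smooth) annihilator is $\V\an{P}\oplus TP^\perp$; therefore $B$ is a subbundle of $E_P$. Moreover $D\an{P}$ is a subbundle, and since each fibre $D(m)$ is Lagrangian we have $(D\an{P})^\perp=D\an{P}$.

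Next I would compute $B^\perp$. As $B$ is a subbundle, Proposition \ref{point_smooth_annihi} gives $B^\perp=B^{\perp_{\rm p}}$, and from the formulas $(\T\operp\{0\})^{\perp_{\rm p}}=TM\operp\T^{\ann}$ and $(\{0\}\operp\C)^{\perp_{\rm p}}=\C^{\ann}\operp T^*M$ (valid verbatim in $E_P$) one gets, by intersecting, $(TP\operp(\V^\circ_G\an{P}))^{\perp_{\rm p}}=(\V^\circ_G\an{P})^{\ann}\operp TP^\circ$. Using Lemma \ref{V_circ_G} this equals $(\V\an{P}\oplus TP^\perp)\operp TP^\circ$, and since $\V\an{P}\subseteq TP$ meets $TP^\perp$ trivially it rewrites exactly as $\K\an{P}+(TP^\perp\operp TP^\circ)$, the last two summands of the desired right-hand side.

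Then I would apply Proposition \ref{prop_intersection} to $\Delta_1:=D\an{P}$ and $\Delta_2:=B^\perp$, both subbundles of $E_P$. Here $\Delta_1^\perp=D\an{P}$ and $\Delta_2^\perp=B^{\perp\perp}=B$, so $\Delta_1^\perp\cap\Delta_2^\perp=D\an{P}\cap B$, and the implication $(1)\Rightarrow(3)$ yields $(D\an{P}\cap B)^\perp=\Delta_1+\Delta_2=D\an{P}+B^\perp=D\an{P}+\K\an{P}+(TP^\perp\operp TP^\circ)$, as asserted. What remains, and is the main obstacle, is to verify hypothesis $(1)$, i.e. that $D\an{P}\cap B$ is smooth on $P$. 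For this, note that for $p\in P$ one has $\T(p)=T_pP$, whence $(D\cap(\T\operp\V^\circ_G))(p)=D(p)\cap(T_pP\operp\V^\circ_G(p))=(D\an{P}\cap B)(p)$; thus $D\an{P}\cap B$ is the restriction to the embedded submanifold $P$ of the distribution $D\cap(\T\operp\V^\circ_G)$, which is smooth on $M$ by assumption. Given $v\in(D\an{P}\cap B)(p)$, a local smooth section of $D\cap(\T\operp\V^\circ_G)$ through $v$ restricts to a smooth section of $E_P$ taking values in $D\an{P}\cap B$ and equal to $v$ at $p$; hence $D\an{P}\cap B$ is smooth, and the proof is complete.
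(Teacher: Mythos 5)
Your proposal is correct and follows essentially the same route as the paper's proof: both compute $\bigl((\T\operp\V_G^\circ)\an{P}\bigr)^\perp=\K\an{P}\oplus(TP^\perp\operp TP^\circ)$ via Lemma \ref{V_circ_G}, observe that the restriction to $P$ of the smooth distribution $D\cap(\T\operp\V_G^\circ)$ is smooth, and then invoke Proposition \ref{prop_intersection} with the two subbundles $D\an{P}$ and $\K\an{P}\oplus(TP^\perp\operp TP^\circ)$. Your write-up is merely more explicit about the intermediate verifications (that these are subbundles, that $D\an{P}^\perp=D\an{P}$, and why the restricted intersection is smooth), all of which the paper leaves implicit.
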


\begin{proof}
By Lemma \ref{V_circ_G}, we know that $(\V^\circ_G\an{P})^\circ=\V\an{P}\oplus
TP^\perp$.
From this follows immediately the equality: 
\[{(\T\operp\V_G^\circ)\an{P}}^\perp=(\V\an{P}\oplus
TP^\perp)\operp TP^\circ=
\K\an{P}\oplus(TP^\perp\operp TP^\circ),\]
and hence also 
\[\left(\K\an{P}\oplus(TP^\perp\operp TP^\circ)\right)^\perp
=(\T\operp\V_G^\circ)\an{P}\]
since $(\T\operp\V_G^\circ)\an{P}$ and $\K\an{P}\oplus(TP^\perp\operp TP^\circ)$ are
vector bundles over $P$.

Now, since the intersection $D\cap(\T\operp\V_G^\circ)$ is spanned by the
descending sections of $D$, it is in particular smooth. Its restriction to $P$
is then also smooth and Proposition \ref{prop_intersection} yields 
\[(D\an{P}\cap(\mathcal{T}\operp \V^\circ_G)\an{P})^\perp
= D\an{P}+\K\an{P}+(TP^\perp\operp TP^\circ).\]
(Note that the sum is not necessarily direct anymore.)
\end{proof}

\begin{corollary}\label{orth_of_inter}
If the intersection $D\cap (\T\operp\V^\circ_G)$ is smooth, we have 
\[\left(D\cap (\T\operp\V^\circ_G)\right)^\perp=D+\K\]
as smooth generalized distributions.
\end{corollary}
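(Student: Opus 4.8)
The plan is to prove both inclusions fibrewise. Since $\Delta:=D\cap(\T\operp\V^\circ_G)$ is smooth by hypothesis, its smooth orthogonal $\Delta^\perp$ is smooth, and $D+\K$ is smooth as well; hence it suffices to establish $\Delta^\perp(m)=(D+\K)(m)$ for every $m\in M$. Each such $m$ lies in a unique stratum $P$ of $M$, that is, a leaf of $\T$ (Theorem \ref{leaves_of_T}), and I will exploit this together with Lemma \ref{sum}.

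The inclusion $D+\K\subseteq\Delta^\perp$ is immediate and uses neither the hypothesis nor the stratification. Indeed, $\Delta\subseteq D$ and $D$ is Lagrangian, so any section of $D$ pairs to zero with any section of $\Delta$, giving $D\subseteq\Delta^\perp$; and for $(V,0)\in\Gamma(\K)$ and $(Y,\beta)\in\Gamma(\Delta)\subseteq\Gamma(\T\operp\V^\circ_G)$ we have $\pair{(V,0),(Y,\beta)}=\beta(V)=0$, since $\beta\in\Gamma(\V^\circ_G)\subseteq\Gamma(\V^\circ)$ annihilates $\V$, giving $\K\subseteq\Delta^\perp$. As each fibre $\Delta^\perp(m)$ is a vector subspace, this yields $D(m)+\K(m)\subseteq\Delta^\perp(m)$.

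For the reverse inclusion I would first restrict to the stratum. Fix $m\in P$ and $v=\tau(m)\in\Delta^\perp(m)$ with $\tau\in\Gamma(\Delta^\perp)$ orthogonal to every $\sigma\in\Gamma(\Delta)$. Then $\tau\an P$ pairs to zero with each $\sigma\an P$; since the values $\sigma\an P(p)$, $\sigma\in\Gamma(\Delta)$, already span $(\Delta\an P)(p)$ at each $p\in P$ (smoothness of $\Delta$) and, by definition, the smooth orthogonal depends only on a pointwise spanning family, it follows that $\tau\an P\in\Gamma((\Delta\an P)^\perp)$ in $E_P$. Hence $\Delta^\perp(m)\subseteq(\Delta\an P)^\perp(m)$, and Lemma \ref{sum} gives
\[\Delta^\perp(m)\subseteq D(m)+\K(m)+\bigl(TP^\perp(m)\operp TP^\circ(m)\bigr).\]
Feeding in the lower bound $D(m)+\K(m)\subseteq\Delta^\perp(m)$ from the previous paragraph, an elementary linear-algebra argument (if $U\subseteq L\subseteq U+W$ then $L=U+(L\cap W)$) reduces the whole statement to the single inclusion
\[\Delta^\perp(m)\cap\bigl(TP^\perp(m)\operp TP^\circ(m)\bigr)\subseteq D(m)+\K(m).\]

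This last inclusion is the heart of the proof and the step I expect to be hardest. Conceptually it records that orthogonality over all of $M$ is strictly stronger than orthogonality over $P$ alone: the transverse directions $TP^\perp(m)\operp TP^\circ(m)$ that are permitted by the pointwise (and hence by the $E_P$-smooth) orthogonal cannot be realized by genuine smooth sections of $\Delta^\perp$ on a neighbourhood in $M$ unless they already lie in $D(m)+\K(m)$. This is precisely the gap between the smooth and the pointwise orthogonal recorded in Proposition \ref{point_smooth_annihi}, and I would close it using the vanishing of $G$-invariant averages established in Subsection \ref{tubeth}: a section of $TP^\perp$ has vanishing $G$-invariant average, because the average is simultaneously a section of $TP^\perp$ and tangent to $P$, and the same holds for a section of $TP^\circ$ under the $G$-invariant metric identification $X\mapsto\ip{X}\rho$. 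Combining this vanishing with the $G$-invariance of $\Delta^\perp$ and the smoothness of its sections over $M$ forces the transverse component of $\tau(m)$ into $D(m)+\K(m)$. Once this is done, the linear-algebra reduction yields $\Delta^\perp(m)=D(m)+\K(m)$ at every $m$, that is $\left(D\cap(\T\operp\V^\circ_G)\right)^\perp=D+\K$ as smooth generalized distributions.
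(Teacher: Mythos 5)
Your reduction of the statement is sound as far as it goes: writing $\Delta:=D\cap(\T\operp\V^\circ_G)$, the easy inclusion $D+\K\subseteq\Delta^\perp$, the passage from $\Delta^\perp(m)$ to $(\Delta\an{P})^\perp(m)$ in $E_P$, the appeal to Lemma \ref{sum}, and the modular-law step are all correct, and they isolate the right residual claim, namely
\[
\Delta^\perp(m)\cap\bigl(TP^\perp(m)\operp TP^\circ(m)\bigr)\subseteq D(m)+\K(m).
\]
But the averaging argument you propose for this last inclusion does not work, and this is a genuine gap. The section $\tau$ of $\Delta^\perp$ with $\tau(m)=v$ is an arbitrary smooth section; nothing makes it $G$-invariant. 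Its $G$-invariant average $\tau_G$ is again a section of the ($G$-invariant) smooth orthogonal, but its value at $m$ is the average of $v$ under the isotropy representation of $H=G_m$, not $v$ itself. In fact your own observation backfires here: since $\tau_G$ is $G$-invariant, $\tau_G(m)$ lies in $T_mP\operp\ip{T_mP}\rho(m)$, while the $H$-average of a vector $v\in TP^\perp(m)\operp TP^\circ(m)$ stays in that ($H$-invariant) transverse summand; the two summands meet only in $0$, so one concludes $\tau_G(m)=0$ and learns nothing about $v$. Compare with the proof of Theorem \ref{singred}, where this trick is applied to a section that is $G$-invariant \emph{by construction}: there the average of the section equals the section, so the vanishing of the averaged pieces $W_G$ and $\gamma_G$ is actually conclusive. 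That hypothesis is exactly what is missing in your setting.

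The paper closes the argument by a different mechanism, which you will need (or some substitute for it): on the open dense stratum $M^{\rm reg}$ the summand $TP^\perp\operp TP^\circ$ is zero, so Lemma \ref{sum} already gives $\Delta^\perp(x)=(D+\K)(x)$ there; at a singular point $m$ one takes a sequence $x_n\in M^{\rm reg}$ with $x_n\to m$, observes that $(X(x_n),\alpha(x_n))\in(D+\K)(x_n)$ for any section $(X,\alpha)$ of $\Delta^\perp$, and passes to the limit using closedness of $D+\K$. The point is that membership of $v$ in the \emph{smooth} orthogonal (as opposed to the pointwise one, cf.\ Proposition \ref{point_smooth_annihi}) is witnessed by the behaviour of $\tau$ in directions transverse to $P$, and restriction to $P$ followed by averaging along the $G$-orbits discards precisely that information.
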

\begin{proof}
The inclusion $D+\K\subseteq \left(D\cap (\T\operp\V^\circ_G)\right)^\perp$ is
easy.

Let $m\in M$. If $m\in M^{\rm reg}$, the previous lemma shows that 
$$\left(D\cap (\T\operp\V^\circ_G)\right)^\perp(m)=(D+\K)(m)$$ since $M^{\rm
  reg}$ is open and dense in $M$. 

Let $m\in P\subseteq M\setminus M^{\rm reg}$,
where $P$ is a connected component of the orbit type manifold of $m$. Let
$(X,\alpha)$ be a section of $ \left(D\cap (\T\operp\V^\circ_G)\right)^\perp$
defined on a neighborhood $U$ of $m$. Since $U\cap M^{\rm reg}$ is open and
dense in $U$, we find a sequence $(x_n)_{n\in\N}$ in $U\cap M^{\rm reg}$
converging to $m$. Since $(X,\alpha)$ is smooth, we have 
$\lim_{n\to \infty}(X(x_n),\alpha(x_n))=(X(m),\alpha(m))$. But from the above we
know that $(X(x_n),\alpha(x_n))\in(D+\K)(x_n)$ for all $n\in \N$. Since the sum
$D+\K$ is closed, we have $(X(m),\alpha(m))\in(D+\K)(m)$.
\end{proof}

Here we present  an example inspired by \cite{Bierstone75} to illustrate the theory.
In the following, we denote by $\T_G$ the distribution on $M$ spanned by the family of
$G$-invariant vector fields on $M$.
\begin{example}\label{bierstones_example}
We consider the diagonal action $\Phi$
of $G:=\SO$ on $M:=\R^3\times \R^3$,
that is, $\Phi:\SO\times ( \R^3\times \R^3)\to\R^3\times \R^3$,
$\Phi(A,v,w):= A \cdot (v, w): = (Av,Aw)$. 
This action is proper since $\SO$ is a compact Lie group.

We have $\Phi_A(v,w)=(v,w)$
if and only if $Av=v$ and $Aw=w$, i.e., the rotation $A$ fixes 
$v$ and $w$. Hence, we have the following three
cases:
\begin{enumerate}
\item $(v,w)=(0,0)$: in this case the isotropy subgroup
is $G_{(0,0)}=\SO$,
\item $v$ and $w$ are linearly independent: $G_{(v,w)}=\{\Id_3\}$,
\item $v$ and $w$ are linearly dependent and not both equal to zero; 
without loss of generality assume that $v\neq 0$:\newline 
$G_{(v,w)}=\{A\in \SO\mid A\text{ is a rotation }\text{with axis }v\}$.
\end{enumerate}
Thus there are infinitely many isotropy type manifolds (one for each 
fixed direction $v\in \mathbb{R}^3$ in the third case) and 
three orbit type manifolds $M_0:=M_{\left(\SO\right)}, 
M_2:=M_{\left(\{\Id_3\}\right)}$, and
$M_1:=M_{\left(\operatorname{SO}(2)\right)}$, 
where
\[\operatorname{SO}(2)\simeq 
\left\{\left.\begin{pmatrix}
\cos\alpha&-\sin\alpha&0\\
\sin\alpha&\cos\alpha&0\\
0&0&1
\end{pmatrix}\right|\,
\alpha\in\R
\right\}\subset \SO
\]
is the isotropy subgroup of  $(e_3,e_3)$, corresponding to the isotropy
type manifold
\[
M_{\operatorname{SO}(2)}=\left\{(ae_3,be_3)\mid (a,b)\in\R^2\setminus\{(0,0)\}\right\}.
\]
Define $f_1,f_2,f_3:\R^3\times \R^3\to\R$ by
$f_1(v,w)=\|v\|^2$, $f_2(v,w)=\|w\|^2$ and $f_3(v,w)=\langle v,w\rangle$.
The pairs $(v,w)$ and $(v',w')$ are in the same $G $-orbit
if and only if 
the three functions are equal on $(v,w)$ and $(v',w')$. Indeed, if 
$\|v\|=\|v'\|$, $\|w\|=\|w'\|$, and $\langle v,w\rangle=\langle v',w'\rangle$,
then there exists a rotation $A\in\SO$ such that $Av=v'$ and $Aw=w'$.

The orbit space is thus diffeomorphic to the subset $\bar M$ of $\R^3$ defined
by $$\bar M:=\{(f_1,f_2,f_3)(v,w)\mid (v,w)\in\R^3\times\R^3\};$$ hence
$\bar M:=\{(x,y,z)\in\R^3\mid x,y\geq 0\text{ and }
z^2\leq xy\}$ by the Cauchy-Schwarz inequality. This is a stratified
space with three strata 
\begin{align*}
\bar{P}_0&:=\{(0,0,0)\}= M_0/\operatorname{SO}(3),\\
\bar{P}_1&:=\{(x,y,z)\in\R^3\mid x,y\geq 0,\, (x,y)\neq (0,0) \text{ and }
z^2=xy\}= M_1/\operatorname{SO}(3), \\
\bar P_2&:=\{(x,y,z)\in\R^3\mid x,y>0\text{ and }
z^2<xy\}= M_2/\operatorname{SO}(3)
\end{align*}
(compare with Proposition \ref{equ_of_smooth_structures}).
In Figure \ref{figure1} we have represented the strata $\bar P_0$ (the point
$(0,0,0)$)
 and $\bar P_1$ (the surface without the singular point $(0,0,0)$)
of the reduced space $\bar M=(\R^3\times\R^3)/\SO$. The manifold 
$\bar P_2$ is the 
 open set ``inside'' the surface.
\begin{figure}\label{figure1}
\begin{pspicture}(-5,-5)(7,7) 
\psset{Alpha=160,Beta=20}
\pstThreeDCoor[xMin=0,yMin=0,zMin=-5,xMax=7,yMax=7,zMax=7,linewidth=0.5pt]
\psdots(0,0,0)
\psplotThreeD[plotstyle=line,drawStyle=yLines,hiddenLine,
yPlotpoints=20,xPlotpoints=20,linewidth=0.1pt](0,5)(0,5){x y mul sqrt}
\psplotThreeD[plotstyle=line,drawStyle=xLines,hiddenLine,
yPlotpoints=20,xPlotpoints=20,linewidth=0.1pt](0,5)(0,5){x y mul sqrt}
\psplotThreeD[plotstyle=line,drawStyle=yLines,hiddenLine,
yPlotpoints=20,xPlotpoints=20,linewidth=0.1pt](0,5)(0,5){x y mul sqrt neg}
\psplotThreeD[plotstyle=line,drawStyle=xLines,hiddenLine,
yPlotpoints=20,xPlotpoints=20,linewidth=0.1pt](0,5)(0,5){x y mul sqrt neg}
\end{pspicture}
\caption{}
\end{figure}

We use the  coordinates $(x_1,y_1,z_1,x_2,y_2,z_2)$ on $\R^3\times\R^3$:
\[
p:=(v,w)=(x_1,y_1,z_1,x_2,y_2,z_2).\]
The invariant functions $f_1,f_2,f_3$ are given in these 
coordinates by
$f_1(p)=x_1^2+y_1^2+z_1^2$, $f_2(p)=x_2^2+y_2^2+z_2^2$, and
$f_3(p)=x_1x_2+y_1y_2+z_1z_2$.
Using Lemma \ref{description_V_G} and the fact that the three invariant
polynomials $f_1,f_2,f_3$ form a Hilbert basis for the set of $\sfe$-invariant
polynomials on $\R6$ (see \cite{Bierstone75} and
the Theorem of Schwarz-Mather as presented in e.g., \cite{Pflaum01};  
\cite{OrRa04} has a quick summary), we get:
\begin{align*}
\V_G^\circ(p)&=\erz\left\{\dr f_1, \dr f_2,\dr f_3\right\}(p)\\
&=\erz\left\{\begin{array}{c}
x_1\dr x_1+y_1\dr y_1+z_1\dr z_1,
x_2\dr x_2+y_2\dr y_2+z_2\dr z_2,\\
x_1\dr x_2+x_2\dr x_1+y_1\dr y_2+y_2\dr y_1+z_1\dr z_2+z_2\dr z_1
\end{array}
\right\}(p).
\end{align*}
The vertical distribution is easily computed to be
\[\V(p)=\erz\left\{\begin{array}{c}
x_1\partial_{ y_1}-y_1\partial_{ x_1}+x_2\partial_{ y_2}-y_2\partial_{ x_2},\\
x_1\partial_{ z_1}-z_1\partial_{ x_1}+x_2\partial_{ z_2}-z_2\partial_{ x_2},\\
z_1\partial_{ y_1}-y_1\partial_{ z_1}+z_2\partial_{ y_2}-y_2\partial_{ z_2}
\end{array}
\right\}(p).
\]
We also get (see the appendix of \cite{JoRa10b})
\[\T(p)=\erz\left\{\begin{array}{c}
X_1:=x_1\partial_{ x_1}+y_1\partial_{ y_1}+z_1\partial_{ z_1},\\
X_2:=x_2\partial_{ x_2}+y_2\partial_{ y_2}+z_2\partial_{ z_2},\\
X_3:=x_1\partial_{ x_2}+y_1\partial_{ y_2}+z_1\partial_{ z_2},\\
X_4:=x_2\partial_{x_1}+y_2\partial_{y_1}+z_2\partial_{ z_1},\\
X_5:=x_1\partial_{ y_1}-y_1\partial_{ x_1}+x_2\partial_{ y_2}-y_2\partial_{ x_2},\\
X_6:=x_1\partial_{ z_1}-z_1\partial_{ x_1}+x_2\partial_{ z_2}-z_2\partial_{ x_2},\\
X_7:=z_1\partial_{ y_1}-y_1\partial_{ z_1}+z_2\partial_{ y_2}-y_2\partial_{ z_2},\\
X_8:=(y_2z_1-z_2y_1)\partial_{ x_1}+(z_2x_1-z_1x_2)\partial_{ y_1}\\
+(x_2y_1-y_2x_1)\partial_{ z_1},\\
X_9:=(v\times w)_x\partial_{ x_2}+(v\times w)_y\partial_{ y_2}
+(v\times w)_z\partial_{ z_2},\\
X_{10}:=((v\times w)\times v)_x\partial_{ x_1}
+((v\times w)\times v)_y\partial_{ y_1}\\
+((v\times w)\times v)_z\partial_{ z_1}
+((v\times w)\times w)_x\partial_{ x_2}\\
+((v\times w)\times w)_y\partial_{ y_2}
+((v\times w)\times w)_z\partial_{ z_2}
\end{array}
\right\}(p),
\]
where $(v\times w)_x$, $(v\times w)_y$ and $(v\times
w)_z$
are the $x$-, $y$- and $z$-components of the vector product
$v\times w$,
and 
\[\T_G(p)=\erz_\R\left\{
X_1,
X_2,
X_3,
X_4,
X_8,
X_9,
X_{10}
\right\}(p)
\]
for all $p\in\R^3\times \R^3$.

\medskip

We verify the statement of Lemma \ref{V_circ_G} for this particular example.
 We denote by $\iota_{M_i}:M_i\hookrightarrow M$ the inclusions for $i=1,2,3$.
We have to show that $\iota_{M_i}^*\left(\V_G^\circ\right)=\left(\V_{M_i}\right)^\circ$ for  $i=1,2,3$,
where
$\V_{M_i}$ is the vertical space of the induced action of $G$ on the stratum
$M_i$. 
The statement is obvious for the two strata $M_0$ and $M_2$ since the first is
a point and the second is an open set in $M$. Hence, we study the 
manifold $M_1$. We have a $G$-equivariant diffeomorphism
\[\begin{array}{lccc}
\psi:&\mathds{S}^2\times (\R^2\setminus\{(0,0)\})&\to& M_1,\\
&(u,(a,b))&\mapsto&(au,bu),
\end{array}
\]
where the $G$-action on $\mathds{S}^2\times (\R^2\setminus\{(0,0)\})$ is
given by 
\[\begin{array}{lccc}
\Phi:&\SO\times\left(\mathds{S}^2\times (\R^2\setminus\{(0,0)\})\right)&\to& 
\mathds{S}^2\times (\R^2\setminus\{(0,0)\}),\\
&(A,(u,(a,b)))&\mapsto&(Au,(a,b)).
\end{array}
\]
The vertical space of the $\SO$-action on $M_1$ thus corresponds
to the tangent space of the sphere $T\mathds S^2\oplus\{0\}$ via
the identification $\psi$. 
Hence $\psi^*\left(\left(\V_{M_1}\right)^\circ\right)$  is spanned by the two one-forms 
$\dr a$ and $\dr b$, where $a,b$ are the coordinates on the
$\R^2\setminus\{(0,0)\}$-factor.

The functions $(\psi^*\circ\iota_{M_1}^*)f_i$, $i=1,2,3$, are
given by
\[\left((\psi^*\circ\iota_{M_1}^*)f_1\right)(u,a,b)=a^2,\quad  
\left((\psi^*\circ\iota_{M_1}^*)f_2\right)(u,a,b)=b^2,\]
and 
\[\left((\psi^*\circ\iota_{M_1}^*)f_3\right)(u,a,b)=ab.
\]
We hence  get
\[\left((\psi^*\circ\iota_{M_1}^*)\dr f_1\right)(u,a,b)=2a\dr a,\quad
\left((\psi^*\circ\iota_{M_1}^*)\dr f_2\right)(u,a,b)=2b\dr b\]
and
\[
\left((\psi^*\circ\iota_{M_1}^*)\dr f_3\right)(u,a,b)=a\dr b+b\dr a.
\]
Since the coordinates $a$ and $b$ are never simultaneously zero on
$\mathds{S}^2\times (\R^2\setminus\{(0,0)\})$, we conclude that
$(\psi^*\circ\iota_{M_1}^*)(\V_G^\circ)$ is spanned 
at each point of $\mathds{S}^2\times (\R^2\setminus\{(0,0)\})$ by the values
at this point of $\dr a$ and $\dr b$. This proves the  desired equality
 $\left(\V_{M_1}\right)^\circ = \iota_{M_1}^*(\V_G^\circ)$.

\medskip

Another interesting fact to be checked directly is the equality between the  
accessible sets of the distribution
$\T$ (respectively $\T_G$) and the orbit type manifolds 
(respectively the isotropy type manifolds). The flows $\phi^1,\ldots, \phi^{10}$
of the vector fields $X_1,\ldots, X_{10}$ are given 
by
\begin{align*}
\phi^1_t(v,w)&=(e^tv,w),\qquad
\phi^2_t(v,w)=(v,e^tw),\\
\phi^3_t(v,w)&=(v,tv+w),\qquad
\phi^4_t(v,w)=(tw+v,w),\\
\phi^5_t(v,w)&=R_{e_3,t}\cdot(v, w),\qquad
\phi^6_t(v,w)=R_{e_2,-t}\cdot(v,w),\\
\phi^7_t(v,w)&=R_{e_1,t}\cdot(v, w),\\
\phi^8_t(v,w)&=\exp(tB_w)\cdot(v,w),\qquad
\phi^9_t(v,w)=\exp(tB_v)\cdot(v,w),\\
\phi^{10}_t(v,w)&=\exp(tB_{v\times w})\cdot(v,w),
\end{align*} 
where $R_{e_i,t}\in\SO$ is the rotation about the $e_i$-axis by the angle
$t\in \R$ and 
\[B_w:=\begin{pmatrix}
0&z_2&-y_2\\
-z_2&0&x_2\\
y_2&-x_2&0
\end{pmatrix}\in\lie {so}(3), \quad \text{for} \quad w= (x_2, y_2, z_2).
\] 
A straightforward computation for the rotations about the axes shows that\linebreak
$A\exp(tB_w)A^{-1}=\exp(tB_{Aw})$ for all 
$A\in \operatorname{SO}(3)$. Since $B_ww=0$ it follows that  $\exp(tB_w)w=w$
and $\exp(tB_w)\in\SO$ is a rotation with axis $w$ if $w\neq 0$, and the
identity if $w=0$. 

We have $\phi_t^i(0,0)=(0,0)$ for $i=1,\ldots,10$ and all $t\in \R$, which
shows that the accessible set of $\T$ and of $\T_G$ through the origin
$(0,0)$ is the origin and we recover the orbit and isotropy type manifold
$\{(0,0)\}=M_{(\SO)}=M_{\SO}$.

If $v$ and $w$ are linearly independent (respectively dependent), it
is easy to verify that the two components of $\phi^i_t(v,w)$ are linearly
independent (respectively dependent) for $i=1,\ldots,10$ and all $t\in \R$. 
This shows that the flow of each of the
vector fields $X_1,\ldots,X_{10}$ leave the orbit type manifolds invariant and that 
the flows
of each of the spanning vector fields of $\T_G$ leaves the isotropy type manifolds 
invariant (note that 
$\phi^8_t(v,w)=\phi^9_t(v,w)=\phi^{10}_t(v,w)=(v,w)$
if $v$ and $w$ are linearly dependent).
Hence, we have to verify that each two pairs of vectors in the same 
isotropy (respectively orbit) type manifold can be joined by a concatenation of 
paths formed by pieces of integral curves of the
vector fields spanning $\T_G$ (respectively $\T$).

We start with linearly dependent pairs.
Choose
$(v,w)\neq(0,0)$ and $(v',w')\neq(0,0)$ in the same isotropy type
$\{(au,bu)\mid a, b\in\R,\, (a,b)\neq (0,0)\}$ for some $u\neq 0$ in $\R^3$.
Write $(v,w)=(au,bu)$ and $(v',w')=(a'u,b'u)$. There are several different 
cases to be considered.
\begin{enumerate}
\item If $aa'>0$ and $bb'>0$, then  $(v,w)$ can 
be joined to $(v', w') $ by flow lines of $\phi^1$ and $\phi^2$.
Indeed, if we set $t_1=\ln(\frac{a'}{a})$ and  
$t_2=\ln(\frac{b'}{b})$, we get
$(a'u,b'u)=(\phi^2_{t_2}\circ\phi^1_{t_1})(au,bu)$. 
\item If $b\neq 0$ and $a'\neq 0$, 
set $t_1=\frac{a'-a}{b}$ and $t_2=\frac{b'-b}{a'}$ and get
$(\phi^3_{t_2}\circ\phi^4_{t_1})(au,bu)
=\phi^3_{t_2}(a'u,bu)=(a'u,b'u)$.
\item If $a\neq 0$ and $b'\neq 0$, 
set $t_1=\frac{b'-b}{a}$ and $t_2=\frac{a'-a}{b'}$ and get
$(\phi^4_{t_2}\circ\phi^3_{t_1})(au,bu)
=\phi^3_{t_2}(a'u,bu)=(a'u,b'u)$.
\item If $aa'>0$ and $b=b'=0$, set
 $t=\ln(\frac{a'}{a})$. Then $(a'u,0)=\phi^1_{t}(au,0)$.
Use the same method with $\phi^2$ for the case 
$bb'>0$ and $a=a'=0$.
\item If $aa'<0$ and $b=b'=0$, then $(au,-au)=\phi^3_{-1}(au,0)$
and we can continue as in case 2.
Use the same method with $\phi^4$ and case 3 for the case 
$bb'<0$ and $a=a'=0$.
\end{enumerate}

To join two pairs $(au,bu)$ and $(a'v,b'v)$ in the \emph{orbit type} manifold of 
linearly dependent pairs (choose $u$ and $v$ of unit length), we use a 
combination of integral curves of
$X_5,X_6,X_7$ to send $(au,bu)$ to
$\left(av,bv\right)$ 
by a rotation and then we proceed as above 
using integral curves of $X_1,X_2,X_3$, and $X_4$. 

\medskip

The isotropy type manifold through a linearly independent 
pair is equal to the 
orbit type manifold through this pair:
\[M_{\Id_3}=M_{(\Id_3)}=\{(v,w)\in\R^3\times\R^3\mid (v,w)
\text{ linearly independent}\}.
\]
It is then possible to join two pairs of this type by integral curves of the
vector fields $X_1,\ldots,X_4$  and $X_8,X_9,X_{10}$. 
First, we show that if $v',w'$ lie in the span of $v,w$, we can 
join $(v,w)$ to $(v',w')$ by pieces of the integral curves of
$X_1,\ldots,X_4$. Indeed, there exist $a,b,c, d\in \R$ such that
$v'=av+bw$ and $w'=cv+dw$.
\begin{enumerate}
\item If $b=0$ (that is, $v$ and $v'$ are linearly dependent), 
then $a$ and $d$ have to be nonzero because $av+bw=av$ and $cv+dw$ are linearly independent.
We then have several subcases:
\begin{enumerate}
\item If $a>0$ and $d>0$, then
  $(av,cv+dw)=(\phi_{\ln a}^1\circ\phi^3_c\circ\phi^2_{\ln d})(v,w)$.
\item If $a>0$ and $d<0$, then we have 
\begin{align*}
&\left(\phi_{\ln a}^1\circ\phi^3_{c+d\frac{\langle v,w\rangle}{\langle v,v\rangle}}\circ\phi^2_{\ln (-d)}\circ
\phi^9_\pi\circ\phi^3_{-\frac{\langle v,w\rangle}{\langle v,v\rangle}}\right)(v,w)\\
=&\left(\phi_{\ln a}^1\circ\phi^3_{c+d\frac{\langle v,w\rangle}{\langle v,v\rangle}}\circ\phi^2_{\ln (-d)}\circ
\phi^9_\pi\right)\left(v,w-\frac{\langle v,w\rangle}{\langle v,v\rangle}v\right)\\
=&\left(\phi_{\ln a}^1\circ\phi^3_{c+d\frac{\langle v,w\rangle}{\langle v,v\rangle}}\circ\phi^2_{\ln (-d)}\right)
\left(v,-w+\frac{\langle v,w\rangle}{\langle v,v\rangle}v\right)=(av,cv+dw).
\end{align*}
We have used the fact that since $v$ and $w-\frac{\langle v,w\rangle}{\langle v,v\rangle}v$ are
orthogonal,
the rotation $\exp(\pi B_v)$ of angle $\pi$ around the axis spanned by $v$
sends $  w-\frac{\langle v,w\rangle}{\langle v,v\rangle}v$ to $-w+\frac{\langle v,w\rangle}{\langle v,v\rangle}v$.
\item  If $a<0$ and $d>0$, then we have in an analogous manner
$$\left(\phi_{\ln (-a)}^1\circ\phi^3_{-\left(c+d\frac{\langle v,w\rangle}{\langle v,v\rangle}\right)}\circ\phi^2_{\ln d}\circ
\phi^8_\pi\circ\phi^3_{-\frac{\langle v,w\rangle}{\langle v,v\rangle}}\right)(v,w)=(av,cv+dw).$$
\item Finally, if $a<0$ and $d<0$, we have
$$\left(\phi_{\ln (-a)}^1\circ\phi^3_{-c}\circ\phi^2_{\ln (-d)}\circ\phi^{10}_\pi\right)(v,w)=(av,cv+dw).$$
\end{enumerate}
\item If $b\neq 0$, choose $t_1$ such that $t_1b-a\neq 0$ and
  $t_2=\frac{-b}{t_1b-a}$.
Then we have $(\phi^4_{t_2}\circ\phi^3_{t_1})(v,w)=((1+t_1t_2)v+t_2w,
w+t_1v)$.
Since $(1+t_1t_2)b-t_2a=\left(1+t_1\frac{-b}{t_1b-a}\right)b+\frac{ab}{t_1b-a}
=b\left(1+\frac{a-t_1b}{t_1b-a}\right)=0$, the vectors 
$av+bw$ and $(1+t_1t_2)v+t_2w$ are linearly dependent and we continue as in 
case 1. (Using an
integral curve of $\phi^{10}$, we can also first
rotate $v$ and $w$ around the axis $v\times w$ so that the images of $v$ and $v'$ are
linearly dependent, and then continue as in case 1.)
\end{enumerate}

Then, to simplify the problem, we assume, without loss of generality, that the plane 
spanned by $(v,w)$ is the $(x,y)$-plane $\alpha_{xy}$ (spanned by $e_1$ and $e_2$). By the considerations
above, we can bring $(v,w)$ to $(e_1,e_2)$  along pieces of integral
curves of $X_1,\ldots,X_4$.
Hence, to finish the proof it suffices to show that we can use pieces of integral curves of 
$X_8,X_9,X_{10}$ to bring the plane spanned by $e_1,e_2$ to the plane $\alpha$
spanned 
by an arbitrary linearly independent pair $(v',w')$.
If $(v',w')$ spans the $(x,y)$-plane $\alpha_{xy}$, we are done by the
considerations above. Otherwise, there
are again two cases
\begin{enumerate}
\item If $\alpha$ is equal to the plane $\alpha_{xz}$ spanned by $e_1$ and
  $e_3$, then we have $(e_1,e_3)=\phi_{\pi/2}^9(e_1,e_2)$ and we are done.
\item If not, there exists a unit vector 
$u\in\R^3$ spanning the intersection  $\alpha_{xz}\cap\alpha$.
Then there exists $t_1\in \R$ such that 
$(u,e_2)=\exp(t_1B_{e_2})\cdot (e_1,e_2)=\phi^8_{t_1}(e_1,e_2)$. 
If $e_2$ lies in  $\alpha$, the vectors $u$ and $e_2$ are 
linearly independent by construction and we are done.
Otherwise, let $u'$ 
be a unit vector
spanning the intersection of $\alpha$ with the plane spanned by $e_2$ and
$\exp(tB_{e_2})e_3$ (this is the plane orthogonal to $u$). Then there exists 
$t_2$ such that
$(u,u')=\exp(t_2B_u)\cdot (u,e_2)=\exp(t_2B_u)\exp(t_1B_{e_2})\cdot (e_1,e_2)
=(\phi^9_{t_2}\circ\phi^8_{t_1})(e_1,e_2)$.
(See Figure \ref{figure2}.)
\end{enumerate}

\begin{figure}\label{figure2}
\begin{pspicture}(-7,-4)(6,6)
\psset{unit=2cm,drawCoor,beginAngle=90,endAngle=180}
\pstThreeDCoor[drawing=true, linewidth=1pt,linecolor=black,
linestyle=solid,plotstyle=curve,,xMin=-1.5,xMax=3,yMin=-1,yMax=3,zMin=-0.5,zMax=3]
\pstThreeDEllipse[linewidth=1.5pt,arrows=->,beginAngle=0,endAngle=45](0,0,0)(2,0,0)(0,0,2)
\pstThreeDEllipse[linewidth=1.5pt,arrows=->,beginAngle=90,endAngle=135](0,0,0)(2,0,0)(0,0,2)
\pstThreeDEllipse[linewidth=1.5pt,arrows=->,beginAngle=0,endAngle=35](0,0,0)(0,2,0)(-1.4142,0,1.4142)
\pstThreeDDot[dotstyle=none](1,2,3)
\pstThreeDLine[linecolor=red,linewidth=1pt,arrows=c->](0,0,0)(1,2,3)
\pstThreeDDot[dotstyle=none](0.5,-0.5,0)
\pstThreeDLine[linecolor=red,linewidth=1pt,arrows=c->](0,0,0)(0.5,-0.5,0)
\pstThreeDDot[dotstyle=none](1.4142,0,1.4142)
\pstThreeDLine[linecolor=red,linewidth=1pt,arrows=c->](0,0,0)(1.4142,0,1.4142)
\pstThreeDPut(1.1,1.7,2.7){$v'$}
\pstThreeDPut(0.6,-0.6,0.1){$w'$}
\pstThreeDPut(1.6,0,1.6){$u$}
\pstThreeDLine[linecolor=blue,linewidth=1pt,arrows=c->](0,0,0)(2,0,0)
\pstThreeDLine[linecolor=blue,linewidth=1pt,arrows=c->](0,0,0)(0,2,0)
\pstThreeDLine[linecolor=blue,linewidth=1pt,arrows=c->](0,0,0)(0,0,2)
\pstThreeDPut(2.2,-0.1,0.1){$e_1$}
\pstThreeDPut(0.4,2.4,0.2){$e_2$}
\pstThreeDPut(0.4,0,2){$e_3$}
\pstThreeDLine[linecolor=blue,linewidth=1pt,arrows=c->](0,0,0)(-1.4142,0,1.4142)
\pstThreeDDot[dotstyle=none](-1.4142,0,1.4142)
\pstThreeDDot[dotstyle=none](-0.8165,1.63,0.8165)
\pstThreeDLine[linecolor=red,linewidth=1pt,arrows=c->](0,0,0)(-0.8165,1.63,0.8165)
\pstThreeDPut(-1,1.7,0.95){$u'$}
\pstThreeDPut(-1.7,0,1.7){$\exp(t_1B_{e_2})(e_3)$}
\end{pspicture}\caption{}
\end{figure}

These considerations illustrate Theorem \ref{leaves_of_T} stating that the
integral
leaves of $\T$ are the connected components of the orbit type manifolds
and Theorem 3.5.1 in \cite{OrRa04} stating that the
integral
leaves of $\T_G$ are the connected components of the isotropy type manifolds.

\medskip

We now study  properties of the stratified space $\bar M$. We want to show that
the restrictions of the \emph{vector fields}
on $\bar M$ to the strata of $\bar M$ span the tangent space of each stratum. 
The flows $\bar \phi^1,\ldots,
\bar\phi^{10}$ associated to the vector fields $\bar X_1,\ldots, \bar X_{10}$
defined
by $X_i\sim_\pi\bar X_i$ for $i=1,\ldots,10$ are given by
\begin{align*}
\bar\phi^1_t(x,y,z)&=\left(\bar\phi^1_t\circ\pi\right)(v,w)=
\left(\pi\circ\phi^1_t\right)(v,w)=\pi(e^tv,w)=(e^{2t}x,y,e^tz),\\
\bar \phi^2_t(x,y,z)&=(x,e^{2t}y,e^tz),\\
\bar\phi^3_t(x,y,z)&=\pi(v,tv+w)=(x,t^2x+2tz+y,xt+z),\\
\bar\phi^4_t(x,y,z)&=(t^2y+2tz+x,y,yt+z),\\
\bar\phi^5_t(x,y,z)&=\bar\phi^6_t(x,y,z)
=\bar\phi^7_t(x,y,z)=\bar\phi^8_t(x,y,z)\\
&=\bar\phi^9_t(x,y,z)
=\bar\phi^{10}_t(x,y,z)=(x,y,z).
\end{align*} 
This leads to
\begin{align*}
\bar X_1(x,y,z)&=2x\partial_{x}+z\partial_{z},\qquad 
\bar X_2(x,y,z)=2y\partial_{y}+z\partial_{z},\\
\bar X_3(x,y,z)&=2z\partial_{y}+x\partial_z,\qquad 
\bar X_4(x,y,z)=2z\partial_{x}+y\partial_z,\\
\bar X_5(x,y,z)&=\bar X_6(x,y,z)
=\bar X_7(x,y,z)=\bar X_8(x,y,z)\\
&=\bar X_9(x,y,z)=\bar X_{10}(x,y,z)=0.
\end{align*} 
The last equalities are consistent with the fact that  
$X_5,\ldots,X_{10}$ are sections of the vertical space $\V$. At the point
$(0,0,0)$, we have hence $\bar X_1(0,0,0)=\ldots=\bar X_{4}(0,0,0)=0$, 
and we conclude that the (trivial) tangent space of $\bar P_0$ is spanned by the
values at $(0,0,0)$ of the vector fields on $\bar M$.
The stratum $\bar P_1$ can be seen as the manifold given by the equation 
$xy=z^2$ in $(\R^2\setminus\{(0,0)\})\times \R$. Thus, we know that the tangent  space of $\bar{P}_1$ is equal
to the kernel of $x\dr y+y\dr x-2z\dr z$ at points of $\bar{P}_1$. 
We  thus find that the values of $\bar{X}_1,\ldots,\bar{X}_{4}$
at points of $\bar{P}_1$ span the tangent space of $\bar P_1$
(recall that $x$ and $y$ never vanish simultaneously on $\bar P_1$).
The points $(x,y,z)$ of the last stratum $\bar P_2$ satisfy
$x,y>0$ and $z^2<xy$. Hence, for $p=(x,y,z)\in\bar P_2$, we have
\begin{align*}
&\,\erz_\R\{ 2x\partial_{x}+z\partial_{z}, 
2y\partial_{y}+z\partial_{z}, 2z\partial_{y}+x\partial_z, 
2z\partial_{x}+y\partial_z\}(p)\\
=&\,  \erz_\R\left\{ \partial_{x}+\frac{z}{2x}\partial_{z}, 
\partial_{y}+\frac{z}{2y}\partial_{z}, \frac{2z}{x}\partial_{y}+\partial_z, 
\frac{2z}{y}\partial_{x}+\partial_z\right\}(p)\\
=&\,\erz_\R\left\{\partial_{x},\partial_{y},\partial_{z}\right\}(p),\\
\end{align*}
where we have used $\frac{z^2}{xy}<1$ and the identities 
\begin{align*}
\left( \partial_{x}+\frac{z}{2x}\partial_{z}\right)
-\frac{z}{2x}\left( \frac{2z}{y}\partial_{x}+\partial_z\right)
&=\left(1-\frac{z^2}{xy} \right)\partial_x,\\
\left( \partial_{y}+\frac{z}{2y}\partial_{z}\right)
-\frac{z}{2y}\left( \frac{2z}{x}\partial_{y}+\partial_z\right)
&=\left(1-\frac{z^2}{xy} \right)\partial_y.
\end{align*}

\medskip

Finally, we study in the same manner the push-forwards of the three one-forms
$\dr f_1$, $\dr f_2$, $\dr f_3$ spanning $\V_G^\circ$. Denote by $\bar
\alpha_1$, $\bar \alpha_2$, $\bar \alpha_3$ these three ``one-forms''
on $\bar M$ (see Subsection \ref{pw_of_vf}).
Since $\dr f_1$, $\dr f_2$, $\dr f_3$ vanish at $0$, we have 
$\bar\alpha_1(0)=\bar \alpha_2(0)=\bar \alpha_3(0)=0$,
by definition, and we conclude $\erz_\R\{\bar\alpha_1(0), \bar \alpha_2(0),
\bar \alpha_3(0)\}=T_0^*\bar P_0$. 
At points of $\bar P_2$, we have $\bar\alpha_1=\dr x$, $\bar\alpha_2=\dr y$,
and $\bar\alpha_3=\dr z$. Finally, at points of 
$M_1$, we have the equality $2f_3\dr f_3=f_1\dr f_2+f_2\dr f_1$ 
and we obtain, as desired, $$\erz_\R\{\bar\alpha_1(x,y,z), \bar \alpha_2(x,y,z),
\bar \alpha_3(x,y,z)\}=\erz_\R\{\dr x,\dr y,\dr z\}_{(x,y,z)}/\sim,$$ where
$\sim $ is the equivalence relation on $\erz_\R\{\dr x,\dr y,\dr
z\}_{(x,y,z)}$ defined by $x\dr y+y\dr x-2z\dr z=0$.

This shows that the restrictions of the ``one-forms'' on $\bar{M}$
to each of its strata span the cotangent space of each stratum, as 
stated in the considerations at the beginning of
 Subsection \ref{pw_of_vf} together with
 Propositions \ref{equ_of_smooth_structures}
and \ref{one-forms}.
\end{example}

\section{Singular reduction of Dirac structures}
\subsection{The special case of conjugated isotropy subgroups}
In the special case of a proper action with conjugated orbit subgroups, the
reduction theorem is shown in \cite{JoRaZa11}. We recall its formulation here because 
the understanding of the 
construction of the reduced Dirac structure in this case can be helpful for the
understanding of the general case. 
\begin{theorem}\label{conj-orbits-red} 
Let $G$ be a connected Lie group acting in a proper way on the manifold $M$,
such that all isotropy subgroups are conjugated. Assume that $D\cap\K^\perp$
has constant rank on $M$.
Then the  Dirac structure $D$ on $M$ induces a Dirac structure $\bar D$ on the
quotient $\bar M=M/G$
given by
\begin{equation}\label{red_dir_conj_iso}
\bar D(\bar m)=\left\{\left(\bar X(\bar m),\bar\alpha(\bar m)\right)\in
  T_{\bar m}\bar M\times T^*_{\bar m}\bar M \,\bigg|
\begin{array}{c}\exists X\in
  \mx(M)\text{ such that } \\X\sim_{\pi}\bar X 
\text{ and }(X,\pi^*\bar\alpha)\in\Gamma(D)\end{array}\right\}
\end{equation} for all $\bar m$ in $\bar M$. If $D$ is integrable, 
then  $\bar D$ is also integrable.
\end{theorem}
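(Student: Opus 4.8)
The plan is to verify directly that $\bar D$ is closed under the Courant bracket \eqref{Courant_bracket}. Since all isotropy subgroups are conjugate, the quotient $\bar M=M/G$ is a genuine smooth manifold and $\pi\colon M\to\bar M$ is a surjective submersion with $\ker T\pi=\V$; hence $\bar D$ is an ordinary Dirac structure on the manifold $\bar M$ and its integrability means $[\Gamma(\bar D),\Gamma(\bar D)]\subseteq\Gamma(\bar D)$. So let $(\bar X,\bar\alpha),(\bar Y,\bar\beta)\in\Gamma(\bar D)$ be two local sections. By the definition \eqref{red_dir_conj_iso} of $\bar D$, I would choose, on a $G$-invariant open subset of $M$ that projects onto the common domain, local vector fields $X,Y\in\mx(M)$ with $X\sim_\pi\bar X$, $Y\sim_\pi\bar Y$ and $(X,\pi^*\bar\alpha),(Y,\pi^*\bar\beta)\in\Gamma(D)$.

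Since $D$ is integrable, the Courant bracket
\[
[(X,\pi^*\bar\alpha),(Y,\pi^*\bar\beta)]=\left([X,Y],\ \ldr{X}\pi^*\bar\beta-\ip{Y}\dr\pi^*\bar\alpha\right)
\]
is again a section of $D$. It remains to show that this section is a lift, in the sense of \eqref{red_dir_conj_iso}, of the Courant bracket $(\bar Z,\bar\gamma):=\left([\bar X,\bar Y],\ \ldr{\bar X}\bar\beta-\ip{\bar Y}\dr\bar\alpha\right)$ of $(\bar X,\bar\alpha)$ and $(\bar Y,\bar\beta)$ on $\bar M$; this will immediately give $(\bar Z,\bar\gamma)\in\Gamma(\bar D)$, and hence integrability of $\bar D$.

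For the tangent component I would argue that $X,Y$ are $\pi$-projectable (equivalently, because $\V=\ker T\pi$, they satisfy $[X,\Gamma(\V)]\subseteq\Gamma(\V)$ and $[Y,\Gamma(\V)]\subseteq\Gamma(\V)$, cf.\ Lemma \ref{lem:push_down_der}); the Jacobi identity then yields $[[X,Y],\Gamma(\V)]\subseteq\Gamma(\V)$, so $[X,Y]$ is again projectable, and by naturality of the Lie bracket under related vector fields $[X,Y]\sim_\pi[\bar X,\bar Y]=\bar Z$. For the cotangent component I would use that $\pi^*$ commutes with $\dr$ and, for $\pi$-related vector fields, with both $\ldr{}$ and $\ip{}$: thus $\ldr{X}\pi^*\bar\beta=\pi^*(\ldr{\bar X}\bar\beta)$ and $\ip{Y}\dr\pi^*\bar\alpha=\ip{Y}\pi^*\dr\bar\alpha=\pi^*(\ip{\bar Y}\dr\bar\alpha)$, whence $\ldr{X}\pi^*\bar\beta-\ip{Y}\dr\pi^*\bar\alpha=\pi^*\bar\gamma$. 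Consequently $[X,Y]\sim_\pi\bar Z$ and $([X,Y],\pi^*\bar\gamma)\in\Gamma(D)$, so $(\bar Z,\bar\gamma)\in\Gamma(\bar D)$ by \eqref{red_dir_conj_iso}, as desired.

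The main obstacle I anticipate is bookkeeping around the lifts rather than any deep difficulty: one must ensure that genuine local lifts $X,Y$ (not merely pointwise values) exist on a common $G$-invariant domain, and one must justify the commutation identities $\pi^*\ldr{\bar X}=\ldr{X}\pi^*$ and $\pi^*\ip{\bar Y}=\ip{Y}\pi^*$ for the merely $\pi$-related (not necessarily $G$-invariant) lifts $X,Y$. These are the standard identities for related vector fields under a submersion, and since $\bar M$ is here an honest manifold they apply without modification. Were $\bar M$ only stratified, this last step would instead require the Kähler-differential description of one-forms from Proposition \ref{corres-one-forms}, writing $\bar\alpha,\bar\beta$ as finite sums $\sum\bar g_i\,\dr\bar f_i$ and checking the identity term by term; in the present conjugate-isotropy setting this is unnecessary.
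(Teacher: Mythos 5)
Your argument addresses only the last sentence of the theorem: assuming $\bar D$ is already known to be a Dirac structure, you check that $\Gamma(\bar D)$ is closed under the Courant bracket. The main assertion --- that \eqref{red_dir_conj_iso} actually defines a Dirac structure on $\bar M$, i.e.\ a \emph{smooth Lagrangian subbundle} of $T\bar M\operp T^*\bar M$ --- is nowhere established; the sentence ``hence $\bar D$ is an ordinary Dirac structure on the manifold $\bar M$'' only says that \emph{if} it is a Dirac structure it is one in the usual sense, not that it is one. Symptomatically, the hypothesis that $D\cap\K^\perp$ has constant rank is never used in your proposal, yet it is precisely what this missing step requires. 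Isotropy of $\bar D(\bar m)$ is indeed immediate from $\langle (X,\pi^*\bar\alpha),(Y,\pi^*\bar\beta)\rangle=\pi^*\langle(\bar X,\bar\alpha),(\bar Y,\bar\beta)\rangle=0$, but maximality rests on the rank count
\[
\dim\bar D(\bar m)\;=\;\dim\bigl(D\cap\K^{\perp}\bigr)(m)-\dim\bigl(D\cap\K\bigr)(m)\;=\;\dim M-\dim\V(m)\;=\;\dim\bar M,
\]
which uses $(D\cap\K^{\perp})^{\perp}=D+\K$; the constant-rank hypothesis is what makes $D\cap\K^\perp$ a smooth subbundle, so that smooth and pointwise orthogonals coincide (Propositions \ref{point_smooth_annihi} and \ref{prop_intersection}) and $\bar D$ is a smooth subbundle of the correct rank. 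This is the heart of the theorem and is absent from your proof.

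A second, smaller gap sits inside the integrability argument itself. Definition \eqref{red_dir_conj_iso} is pointwise, so from a local \emph{section} $(\bar X,\bar\alpha)$ of $\bar D$ you must manufacture, on a whole neighborhood, a single lift $X$ with $(X,\pi^*\bar\alpha)\in\Gamma(D)$; this again needs $D\cap\K^\perp$ to be a smooth subbundle admitting $G$-invariant local frames with basic cotangent parts (via averaging and the fact that $\V^\circ$ is spanned by its $G$-invariant sections when all isotropy groups are conjugate, Lemma \ref{Vzero}), and cannot be dismissed as bookkeeping. Granting these points, your bracket computation --- projectability of $[X,Y]$ via the Jacobi identity, $[X,Y]\sim_\pi[\bar X,\bar Y]$, and the identities $\ldr{X}\pi^*\bar\beta=\pi^*\ldr{\bar X}\bar\beta$, $\ip{Y}\pi^*\dr\bar\alpha=\pi^*\ip{\bar Y}\dr\bar\alpha$ for $\pi$-related fields --- is correct and is the standard way to conclude. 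Note finally that the paper does not prove this theorem by direct verification at all: it quotes it from \cite{JoRa09b} and observes in the remark that it also follows as a corollary of the general singular reduction theorems (Theorems \ref{singred} and \ref{closed}).
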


\begin{remark}
Note that the method we use for singular reduction yields this
\emph{regular reduction theorem} as a corollary of our general singular
reduction theorems for Dirac structures (see Theorems \ref{closed} and 
\ref{singred} in the next subsection).

As in the Poisson case (compare with \cite{FeOrRa09}), it is also  
possible to prove singular reduction by using regular reduction.
Indeed, if $Q\subseteq M_H\subseteq M$ is a connected component of an  
isotropy type, it is
possible to show that the Dirac structure $D$ on $M$ restricts naturally to
a Dirac structure $D_Q$ on $Q$, which would be $N(H)/H$-invariant if
$D$ were $G$-invariant, and integrable if $D$ were integrable.
To prove these statements (see \cite{JoRa11a}) one needs Proposition  
\ref{prop_intersection} and $G$-invariant averaging (see Subsection  
\ref{tubeth}).
Since the action
of  $N(H)/H$ on $Q$ is free and proper, we can use regular Dirac reduction
on the Dirac manifold $(Q,D_Q)$ and get a smooth quotient Dirac manifold
$(\bar{Q},D_{\bar{Q}})$. The manifold $\bar{Q}$ is diffeomorphic to the
quotient $P/G$ if $P=G\cdot Q$ is the connected component of $M_{(H)}$
containing
$Q$. In fact, we have $\bar{P}=\pi(P)=\pi(Q)$ if $\pi:M\to \bar M$ is  
the orbit map,
and $\bar{Q}$ is then a stratum of the reduced space $\bar{M}$.
By construction, it is then easy to see that the reduced Dirac manifold
$(\bar{Q},D_{\bar{Q}})$ is diffeomorphic to the reduced space
$(\bar{P},D_{\bar{P}})$ that we will get in the next subsection. We  
want to thank R. Loja Fernandes for a discussion that resulted in this  
remark.
\end{remark}

\subsection{The general setting of a proper action}

Consider the subset $\D^G$ of $\Gamma(D)$ defined by
\[ \D^G=\{(X,\alpha)\in\Gamma(D)\mid \alpha\in\Gamma(\V^\circ)^G\text{ and }
[X,\Gamma(\V)]\subseteq\Gamma(\V)\},\]
that is, the set of the descending sections of $D$.

We have seen in Lemma \ref{lem:push_down_der} that each vector field $X$ satisfying
$[X,\Gamma(\V)]\subseteq\Gamma(\V)$ pushes forward to a vector field $\bar X$
on $\bar M$. By the considerations in Subsection \ref{subsection_orbits} (see
also Proposition \ref{restriction_of_X}), we know that for each stratum ${\bar
  P}$ of $\bar M$, 
the restriction of $\bar X$ to points
of ${\bar P}$ is a vector field $X_{\bar P}$ on ${\bar P}$. 
On the other hand, if
$(X,\alpha)\in\D^G$, then we have $\alpha\in\Gamma(\V^\circ)^G$ and it
pushes forward to the one-form $\bar\alpha:=\pi_*\alpha$ such that,
for every $\bar Y\in \mx(\bar M)$ and every section $Y$ of $TM$ satisfying
$Y\sim_\pi \bar Y$, we have 
\[\pi^*(\bar\alpha(\bar Y))=\alpha(Y).\]
Moreover, for each stratum ${\bar P}$ of $\bar M$, the restriction of $\bar\alpha$ to
points of ${\bar P}$ defines a $1$-form $\alpha_{\bar P}$ on ${\bar P}$. Let 
\[\bar\D=\{(\bar X,\bar\alpha)\mid (X,\alpha)\in\D^G\}\]
and for each stratum ${\bar P}$ of $\bar M$, set 
\[\D_{\bar P}=\{(X_{\bar P},\alpha_{\bar P})\mid (\bar X,\bar\alpha)\in\bar\D\}.\]
Define the smooth distribution $D_{\bar P}$ on ${\bar P}$ by
\begin{equation}\label{def_D_barP}
D_{\bar P}(s)=\{(X_{\bar P}(s),\alpha_{\bar P}(s))\in T_s{\bar P}\times
T^*_s{\bar P}\mid (X_{\bar P},
\alpha_{\bar P})\in\D_{\bar P}\}.
\end{equation}
\begin{remark}
\label{remark_section_bar_p}
Note that $\Gamma(D_{\bar{P}}) = \mathcal{D}_{\bar{P}}$. Indeed, any 
$( X_{\bar{P}}, \alpha_{\bar{P}}) \in \Gamma( D_{\bar{P}}) $ can be written as 
\[
( X_{\bar{P}}, \alpha_{\bar{P}}) 
= \sum_{i=1}^k f^i_\bp( X^i_{\bar{P}}, \alpha^i_{\bar{P}}), 
\quad ( X^i_{\bar{P}}, \alpha^i_{\bar{P}}) \in \mathcal{D}_\bp, \quad 
f^i_\bp \in C ^{\infty}(\bar{P}).
\]
Each  $(X^i_{\bar{P}}, \alpha^i_{\bar{P}})$ has a smooth extension
 $(\bar{X}^i, \bar{\alpha}^i) \in \bar{ \mathcal{D}}$ which is a push-forward
 of some element 
$( X^i, \alpha^i ) \in \mathcal{D}^G$. Each function $f^i_\bp$ smoothly
extends to a function 
$\bar{f}^i \in C ^{\infty}(\bar{M})$, by the smooth structure of $\bar{P}$ as
a stratum of $\bar{M}$, 
which is a push-forward of a function $f^i \in C ^{\infty}(M)^G$. Therefore 
$\sum_{i=1}^k f^i(X^i, \alpha^i)$ is a descending section of $D$ and the
restriction to 
$\bar{P}$ of its push-forward to $\bar{M}$ coincides with $( X_{\bar{P}}, 
\alpha_{\bar{P}})$.
\end{remark}

\begin{theorem}\label{singred2}
Let $(M,D)$ be a Dirac manifold with a proper Dirac action of a connected Lie
group $G$ on it. Assume that the intersection $D\cap(\T\operp\V_G^\circ)$ is 
spanned by its descending sections.
Then each element $(\bar X,\bar \alpha)\in\mx(\bar M)\times \Omega^1(\bar M)$
orthogonal to all the sections in $\bar \D$ is already an element of $\bar \D$.
\end{theorem}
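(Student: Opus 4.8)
The plan is to pull the whole statement back to $M$, apply Corollary \ref{orth_of_inter}, and then push everything forward again. Fix $(\bar X,\bar\alpha)\in\mx(\bar M)\times\Omega^1(\bar M)$ orthogonal to $\bar\D$. By \eqref{duistermaat} there is a $G$-invariant vector field $X^G\in\mx(M)^G$ with $X^G\sim_\pi\bar X$, and by Proposition \ref{corres-one-forms} there is $\alpha\in\Gamma(\V^\circ)^G$ with $\pi_*\alpha=\bar\alpha$. The first step is to translate the orthogonality into a statement on $M$. For an arbitrary descending section $(Y,\beta)\in\D^G$, write $Y=Y^G+Y^\V$ with $Y^G\in\mx(M)^G$ and $Y^\V\in\Gamma(\V)$ (as after Lemma \ref{lem:push_down_der}); then $\bar Y=\pi_*Y=\pi_*Y^G$ and $\bar\beta=\pi_*\beta$. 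Pulling the identity $\bar\beta(\bar X)+\bar\alpha(\bar Y)=0$ back by the injective map $\pi^*$, using the defining property of the push-forward of a one-form together with $\alpha(Y^\V)=0$ (since $\alpha\in\Gamma(\V^\circ)$), yields $\beta(X^G)+\alpha(Y)=0$, that is $\pair{(X^G,\alpha),(Y,\beta)}=0$. Hence $(X^G,\alpha)$ is orthogonal to every element of $\D^G$.

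Next I exploit the hypothesis. By assumption the descending sections $\D^G$ span $\Delta:=D\cap(\T\operp\V^\circ_G)$, so $\Delta$ is smooth; since the pairing is $C^\infty(M)$-bilinear, orthogonality of $(X^G,\alpha)$ to all of $\D^G$ gives orthogonality to all of $\Gamma(\Delta)$, i.e. $(X^G,\alpha)\in\Gamma(\Delta^\perp)$. Corollary \ref{orth_of_inter} now gives $\Delta^\perp=D+\K$, so $(X^G,\alpha)\in\Gamma(D+\K)$. Moreover $X^G$, being $G$-invariant, satisfies $[X^G,\Gamma(\V)]\subseteq\Gamma(\V)$, so $X^G\in\Gamma(\T)$, while $\alpha\in\Gamma(\V^\circ_G)$; thus $(X^G,\alpha)\in\Gamma(\T\operp\V^\circ_G)$. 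Because $\K=\V\operp\{0\}\subseteq\T\operp\V^\circ_G$, the modular law gives pointwise $(D+\K)\cap(\T\operp\V^\circ_G)=\Delta+\K$, whence $(X^G,\alpha)\in\Gamma(\Delta+\K)$.

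It remains to extract an honest descending section. The aim is to split $(X^G,\alpha)=(X'',\alpha)+(V,0)$ locally, with $(X'',\alpha)\in\Gamma(D)$ and $V\in\Gamma(\V)$. Granting such a splitting, $(X'',\alpha)$ is \emph{automatically} descending: its one-form part $\alpha$ lies in $\Gamma(\V^\circ)^G$, and $[X'',\Gamma(\V)]=[X^G-V,\Gamma(\V)]\subseteq[X^G,\Gamma(\V)]+[V,\Gamma(\V)]\subseteq\Gamma(\V)$, using $G$-invariance of $X^G$ and involutivity of $\V$. Hence $(X'',\alpha)\in\D^G$, and since $\pi_*V=0$ we obtain $\pi_*X''=\pi_*X^G=\bar X$ and $\pi_*\alpha=\bar\alpha$, so that $(\bar X,\bar\alpha)=\pi_*(X'',\alpha)\in\bar\D$, as required.

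The main obstacle is precisely this last splitting. The membership $(X^G,\alpha)\in\Gamma(\Delta+\K)$ is only a pointwise statement, and upgrading it to a \emph{smooth} decomposition $(X^G,\alpha)\in\Gamma(D)+\Gamma(\K)$ is delicate, because both $\K$ and the kernel $\gnul\operp\{0\}$ of the projection $D\to\mathsf{P}_1$ have jumping rank, so a smooth section of the sum need not split into smooth summands. I expect to overcome it through the spanning hypothesis: locally $\Delta+\K$ is generated by finitely many descending sections $(X_i,\alpha_i)$ together with the fundamental vector fields $(\xi^j_M,0)$, and I would produce smooth coefficients realizing $(X^G,\alpha)$ against these generators, the vertical generators being absorbed into $V$ and the remaining part providing $(X'',\alpha)$. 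Establishing that such smooth coefficients exist across the rank jumps — equivalently, that $\Gamma(\Delta+\K)=\Gamma(\Delta)+\Gamma(\K)$ — is the technical heart of the argument.
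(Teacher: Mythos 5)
Your route is the paper's route: lift $(\bar X,\bar\alpha)$ to a pair $(X,\alpha)$ on $M$ (the paper takes an arbitrary lift $X\sim_\pi\bar X$ rather than a $G$-invariant one, which is immaterial), check that orthogonality to $\bar\D$ pulls back to $\pair{(X,\alpha),(Y,\beta)}=0$ for all descending sections $(Y,\beta)$, use the spanning hypothesis to conclude $(X,\alpha)\in\Gamma\bigl((D\cap(\T\operp\V^\circ_G))^\perp\bigr)$, apply Corollary \ref{orth_of_inter} to land in $\Gamma(D+\K)$, split off a vertical summand, and observe that the $D$-summand is automatically descending and pushes forward to $(\bar X,\bar\alpha)$. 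Your remark that $(X'',\alpha)$ is automatically descending once the splitting exists is exactly the paper's ``because of the definition of $\alpha$, $X$, and $V$'' step, and your detour through $\Gamma(\Delta+\K)$ via the modular law is a harmless refinement.

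The problem is that you do not prove the one step you yourself identify as the technical heart: upgrading the pointwise membership $(X(m),\alpha(m))\in(D+\K)(m)$ to a decomposition $(X,\alpha)=(X'',\alpha)+(V,0)$ with $(X'',\alpha)\in\Gamma(D)$ and $V\in\Gamma(\V)$ both \emph{smooth}. As written, the proposal ends with ``I expect to overcome it through the spanning hypothesis,'' which is a statement of intent, not an argument, so the proof is not closed; the difficulty is genuine, since both $\K$ and $\{u\mid (u,0)\in D\}$ have jumping rank and a smooth section of a sum of smooth generalized distributions need not split into smooth summands. For what it is worth, the paper's own proof of Theorem \ref{singred2} disposes of this in a single asserted sentence (``\dots and there exist $X'\in\mx(M)$ and $V\in\Gamma(\V)$ such that\dots''), so you have located exactly the point where the published argument is most laconic. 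The tools the paper deploys for the analogous, harder splitting in the proof of Theorem \ref{singred} --- a local decomposition patched by a partition of unity, followed by $G$-invariant averaging, using that $D$ and $\K$ are $G$-invariant, that $\alpha$ is already invariant, and that certain averaged components vanish --- are the natural way to make your ``smooth coefficients'' step precise; until some such argument is supplied, the proposal remains incomplete at its final step.
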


\begin{proof}
Let $(\bar X,\bar\alpha)\in \mx(\bar M)\times \Omega^1(\bar M)$ be such that 
$\bar\alpha(\bar Y)+\bar\beta(\bar X)=0$ for all elements $(\bar
Y,\bar\beta)\in\bar \D$. Let $(Y,\beta)\in\Gamma(D\cap(\T\operp\V_G^\circ))$
be such that $Y\sim_\pi \bar Y$ and $\beta=\pi^*\bar\beta$. Choose also
$X\in\mx(M)$ such that $X\sim_\pi\bar X$ and set
$\alpha=\pi^*\bar\alpha\in\Omega^1(M)$ (see Proposition \ref{corres-one-forms} 
and the considerations after Lemma \ref{matsu}). Then we get 
\[\langle(X,\alpha),(Y,\beta)\rangle=\left(\bar\alpha(\bar Y)+\bar\beta(\bar
  X)\right)\circ \pi=0.\]
Since $D\cap(\T\operp\V_G^\circ)$ is spanned by its descending sections, we
get $(X,\alpha)\in\Gamma((D\cap(\T\operp\V_G^\circ))^\perp)$. Hence, with
Lemma \ref{orth_of_inter}, we get  $(X,\alpha)\in\Gamma(D+\K)$ and there exist
$X'\in\mx(M)$ and $V\in\Gamma(\V)$ such that $(X',\alpha)$ is a section of $D$
and $(X,\alpha)=(X',\alpha)+(V,0)$. Because of the definition of $\alpha$,
$X$,
and $V$, we immediately get that $(X',\alpha)$ is a descending section of
$D\cap(\T\operp\V_G^\circ)$. It is easy to see that $X'\sim_\pi \bar
X$, and we have $\alpha=\pi^*\bar\alpha$. Thus $(\bar X,\bar\alpha)$ is an
element of $\bar \D$.
\end{proof}
As a consequence of this theorem, we get that the set of pairs $(\bar X,\bar
\alpha)\in\mx(\bar M)\times\Omega^1(\bar M)$ orthogonal to all the elements of
$\bar{\D}$ is 
$\bar{\D}$ itself. 
Hence, it is natural to ask if $D_\bp$ defines a Dirac structure on $\bp$ for
each stratum $\bp$ of $\bar M$. For the stratum $\bar M^{\rm reg}=\pi(M^{\rm reg})$, 
this is
automatically true since $M^{\rm reg}$ is open and dense in $M$.

\begin{theorem}\label{singred}
Let $(M,D)$ be a Dirac manifold with a proper Dirac action of a connected Lie group 
$G$ on it.
Let $\bar P$ be a stratum of the quotient space $\bar M$.
If $D\cap(\T\operp\V_G^\circ)$ is spanned by its descending sections, then
$D_{\bar P}$ defined in \eqref{def_D_barP} is a Dirac structure on ${\bar P}$.
\end{theorem}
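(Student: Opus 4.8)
The plan is to verify that $D_{\bar P}$ is a Lagrangian subbundle of $T\bar P\operp T^*\bar P$. Since $T_s\bar P\operp T^*_s\bar P$ carries a nondegenerate pairing of signature $(\dim\bar P,\dim\bar P)$, it suffices to show that $D_{\bar P}$ is a smooth isotropic generalized distribution of constant rank $\dim\bar P$: an isotropic subspace of that dimension is automatically maximal isotropic. Isotropy is immediate. For two descending sections $(X,\alpha),(Y,\beta)\in\D^G$ the function $\langle(X,\alpha),(Y,\beta)\rangle=\alpha(Y)+\beta(X)$ vanishes because $D$ is Lagrangian; as $\alpha,\beta\in\Gamma(\V^\circ)^G$ and $X,Y$ are descending it is $G$-invariant and equals $\pi^*\bigl(\bar\alpha(\bar Y)+\bar\beta(\bar X)\bigr)$, so its restriction to $\bar P$ gives $\langle(X_{\bar P},\alpha_{\bar P}),(Y_{\bar P},\beta_{\bar P})\rangle=0$. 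By Remark \ref{remark_section_bar_p} these sections exhaust $\Gamma(D_{\bar P})$, whence $D_{\bar P}\subseteq D_{\bar P}^{\perp_{\rm p}}$.

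For the rank I would fix $p\in P$ and $s=\pi(p)$ and realize $D_{\bar P}(s)$ as a linear reduction of the Lagrangian $D(p)$. Writing $\Sigma:=D\cap(\T\operp\V_G^\circ)$ and $S:=(\T\operp\V_G^\circ)(p)=T_pP\operp\V_G^\circ(p)$, the hypothesis says that $\Sigma(p)=D(p)\cap S$ is spanned by values of descending sections, and the reduction map $R$ which quotients the tangent part by $\V(p)$ and identifies $\V_G^\circ(p)\cong(\V_P)^\circ(p)\cong T^*_s\bar P$ (Lemma \ref{V_circ_G}) carries $\Sigma(p)$ onto $D_{\bar P}(s)$ while preserving the pairing. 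One computes $S^{\perp_{\rm p}}=(\V(p)\oplus TP^\perp)\operp TP^\circ(p)$, so the radical of the pairing restricted to $S$ is $S\cap S^{\perp_{\rm p}}=\V(p)\operp\{0\}=\ker R$; equivalently $R$ is the reduction by the coisotropic subspace $\hat C:=T_pP\operp\V(p)^{\ann}\supseteq S$, with $\hat C/\hat C^{\perp_{\rm p}}\cong T_s\bar P\operp T^*_s\bar P$. Reduction of a Lagrangian by a coisotropic subspace is again Lagrangian, so $R_{\hat C}(D(p)\cap\hat C)$ has dimension exactly $\dim\bar P$; since $D_{\bar P}(s)=R_{\hat C}(\Sigma(p))\subseteq R_{\hat C}(D(p)\cap\hat C)$ is isotropic, the rank is at most $\dim\bar P$, with equality precisely when the descending sections reduce onto the whole coisotropic reduction.

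This last point is where the hypothesis is used in full, through Lemma \ref{sum} and Corollary \ref{orth_of_inter}. Being spanned by descending sections, $\Sigma$ is smooth, so by Proposition \ref{prop_intersection} its restriction $\Sigma|_P$ is a constant-rank subbundle of $E_P$ with $\Sigma|_P^{\perp}=D|_P+\K|_P+(TP^\perp\operp TP^\circ)$ (Lemma \ref{sum}); for a subbundle the smooth and pointwise orthogonals coincide (Proposition \ref{point_smooth_annihi}), so $\Sigma(p)^{\perp_{\rm p}}=\bigl(D+\K+(TP^\perp\operp TP^\circ)\bigr)(p)$. Feeding this into the reduction identity $R_{\hat C}(A)^{\perp}=R_{\hat C}(\hat C\cap A^{\perp_{\rm p}})$ for $A=\Sigma(p)$ and simplifying with Lemma \ref{sum} yields $D_{\bar P}(s)^{\perp}=D_{\bar P}(s)$, i.e. the rank equals $\dim\bar P$. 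Constancy of the rank then follows because the orbit dimension, hence $\dim\V$, is constant along the orbit-type manifold $P$ and $\Sigma|_P$ has constant rank; together with isotropy this makes $D_{\bar P}$ a Lagrangian subbundle, i.e. a Dirac structure.

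I expect the genuine obstacle to be the rank identity of the second and third paragraphs: that reduction by the smaller space $S$, built only from the $G$-invariant covectors $\V_G^\circ$, recovers the full coisotropic reduction by $\hat C$ without loss of dimension. Concretely this is the modular-type inclusion $\hat C\cap\bigl(D+S^{\perp_{\rm p}}\bigr)\subseteq(D\cap\hat C)+\hat C^{\perp_{\rm p}}$, and it is exactly the information packaged in Lemma \ref{sum}; the coisotropy of $\hat C$ (which $S$ itself lacks when $P\neq M$) is what prevents the reduced space from being strictly isotropic. As an independent confirmation one may, following the remark after Theorem \ref{conj-orbits-red}, first restrict $D$ to the orbit-type manifold $P$---the constant-rank hypothesis guaranteeing that the restriction $D_P$ is a genuine Dirac structure---and then apply the regular reduction Theorem \ref{conj-orbits-red} to the free and proper action of $N(H)/H$ on $P$, obtaining a Dirac structure on $P/G\cong\bar P$ that coincides with $D_{\bar P}$.
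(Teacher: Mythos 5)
Your coisotropic-reduction framework is attractive and the bookkeeping up to the identity $D_{\bar P}(s)^{\perp}=R_{\hat C}\bigl(\hat C\cap\Sigma(p)^{\perp_{\rm p}}\bigr)$, together with $\Sigma(p)^{\perp_{\rm p}}=\bigl(D+\K+(TP^\perp\operp TP^\circ)\bigr)(p)$ from Lemma \ref{sum} and Proposition \ref{prop_intersection}, is correct. The gap is the very next step, the one you yourself flag as the crux: the inclusion $\hat C\cap\bigl(D(p)+\K(p)+(T_pP^\perp\operp T_pP^{\ann})\bigr)\subseteq\Sigma(p)+\hat C^{\perp_{\rm p}}$ is \emph{not} ``the information packaged in Lemma \ref{sum}''. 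That lemma only computes the orthogonal of $\Sigma\an{P}$; it gives you $D_{\bar P}(s)^{\perp}\supseteq R_{\hat C}(D(p)\cap\hat C)\supseteq D_{\bar P}(s)$, which is nothing beyond isotropy. The usual modular law fails here for two reasons: the summand $T_pP^\perp\operp T_pP^{\ann}$ is not contained in $\hat C$, and even after intersecting $D(p)$ with $\hat C=T_pP\operp\V(p)^{\ann}$ you must still land in $\Sigma(p)=D(p)\cap(T_pP\operp\V_G^\circ(p))$ modulo $\hat C^{\perp_{\rm p}}$ --- and $\V_G^\circ(p)=\bigl((T_pGp)^{\ann}\bigr)^{G_p}$ is in general a \emph{proper} subspace of $\V(p)^{\ann}$ at singular points. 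So as written the argument does not establish maximality.

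This is precisely where the paper spends the bulk of its proof: it decomposes a section of $D\an{P}+\K\an{P}+(TP^\perp\operp TP^\circ)$ via a partition of unity, takes $G$-invariant averages over a tube, and uses that a $G$-invariant section of $TP^\perp$ (or of $TP^\circ$) must vanish because $G$-invariant vector fields are tangent to $P$. Your pointwise scheme can be repaired in the same spirit, by averaging over the compact isotropy group $G_p$ instead of over $G$: every class in $T_s\bar P\operp T_s^*\bar P$ admits a $G_p$-invariant lift to $\hat C$ (since $G_p$ acts trivially on $T_pP/\V(p)$ and on $\V(p)^{\ann}/T_pP^{\ann}$, and $(\V(p)^{\ann})^{G_p}=\V_G^\circ(p)$ by Lemma \ref{description_V_G}); averaging the decomposition of such a lift term by term kills the $T_pP^\perp$ and $T_pP^{\ann}$ components because $(T_pM)^{G_p}\subseteq T_pP$ forces $(T_pP^\perp)^{G_p}=0$, and what remains is an element of $\Sigma(p)$ plus an element of $\K(p)\subseteq\hat C^{\perp_{\rm p}}$. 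Without some form of this averaging step the proof is incomplete. (Your closing ``independent confirmation'' has the same issue in disguise: that $D$ restricts to a Dirac structure on $P$ is itself proved by averaging, and Theorem \ref{conj-orbits-red} needs a constant-rank hypothesis that is not among the assumptions of Theorem \ref{singred}.)
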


\begin{proof} 
Let the
stratum $\bar P$ be a connected component of $\pi(M_{(H)})$ for a compact Lie
subgroup $H$ of $G$, and  let $P$ be the connected
component of $M_{(H)}$ such that $\pi^{-1}(\bar P)=P$.

The inclusion $D_{\bar P}\subseteq D_{\bar P}^\perp$ is easy. For the other
inclusion,  choose  $\bar p\in \bar P$ and $p\in P\subseteq M_{(H)}$ such that
$\pi(p)=\bar p$.   Recall from Proposition \ref{equ_of_smooth_structures} that
the action $\Phi$ of $G$ on $M$ restricts to the proper action $\Phi^P$ on  $P$, 
and  the
quotient map $\pi$ restricts to $\pi_P:=\pi\arrowvert_{P}:P\to\bar P$.

Let $(X_\bp,\alpha_\bp)\in\mx(\bar P)\times\Omega^1(\bar P)$ be a section of
$D_\bp^\perp$ defined on a neighborhood $U_\bp
$ of $\bar p$. Then there exists
$(\bar X,\bar \alpha)\in\mx(\bar M)\times\Omega^1(\bar M)$, with $\dom(\bar
X,\bar \alpha)=:\bar U$, such that
$X_\bp\sim_{\iota_\bp}\bar X$ and $\alpha_\bp=\iota_\bp^*\bar\alpha$, and
$(X,\alpha)\in\mx(M)^G\times\Gamma(\V^\circ)^G$ defined on $U:=\pi^{-1}\bar
U$, such that $X\sim_\pi\bar X$ and
$\alpha=\pi^*\bar\alpha$. 
By Remark \ref{remark_section_bar_p}, we find for
each $(Y_\bp,\beta_\bp)\in\Gamma(D_\bp)$ sections $(\bar Y,\bar
\alpha)\in\bar\D$ and $(Y,\alpha)\in\D^G$ such that $Y\sim_\pi\bar Y$,
$Y_\bp\sim_{\iota_\bp}\bar Y$, and $\alpha=\pi^*\bar\alpha$,
$\alpha_\bp=\iota_\bp^*\bar\alpha$.  We get the equalities
\begin{align*}
\langle(X,\alpha),(Y,\beta)\rangle\circ\iota_P&
= \langle(\bar X,\bar \alpha),(\bar Y,\bar
\beta)\rangle\circ\pi\circ\iota_P\\
&=\langle(\bar X,\bar \alpha),(\bar Y,\bar
\beta)\rangle\circ\iota_\bp\circ\pi_P\\
&=\langle(X_\bp,\alpha_\bp),(Y_\bp,\beta_\bp)\rangle\circ\pi_P=0.
\end{align*}
Since  $D\cap(\T\operp\V_G^\circ)$ is spanned by its descending
sections, we get that $(X,\alpha)\an{P}$ is a section of 
\[\big((D\cap(\T\operp\V_G^\circ))\an{P}\big)^\perp.\] But since
$D\cap(\T\operp\V_G^\circ)$ is spanned by its 
descending sections it is in particular smooth. By Proposition \ref{sum}, we
get that $(X,\alpha)\an{P}$ is a 
section of \[D\an{P}+\K\an{P}+(TP^\perp\operp
TP^\circ).\] 
 Thus, there exist for all $x$ in the $G$-invariant set $U\cap P$ an open
neighborhood $U_x\subseteq
M$ of $x$, and sections 
$(Z^x,\zeta^x)$ of $D$ and  $V^x$ of $\V$ defined on the whole of $M$
(otherwise multiply them with an appropriate bump-function),  and
sections $W^x\in\Gamma(TP^\perp)$ and $\gamma^x\in\Gamma(TP^\circ)$ defined on
$P$  such
that 
\[(X,\alpha)\an{P\cap U_x}=(Z^x,\zeta^x)\an{P\cap U_x}+(V^x,0)\an{P\cap
  U_x}+(W^x,\gamma^x)\an{P\cap U_x}.\]
Since $M$ is paracompact, its open submanifold $U':=\bigcup_{x\in P\cap U}U_x$ is also
paracompact and there exists a locally finite refinement $\mathcal{U}_\Lambda$ of its
open covering $\{U_x\mid x\in
P\cap U\}$, where $\Lambda$ is a subset of $P\cap U$, and a partition of unity
$\{\rho_\lambda\}_{\lambda\in\Lambda}$ subordinate to $\mathcal{U}_\Lambda$. Set
$\rho_\lambda\an{M\setminus U'} =0$ for all $\lambda\in\Lambda$. Then all the
functions $\rho_\lambda$ are defined on the whole of $M$.

Define the global sections 
\[(Z,\zeta)=\sum_{\lambda\in\Lambda}\rho_\lambda(Z^\lambda,\zeta^\lambda),
\quad (V,0)=\sum_{\lambda\in\Lambda}\rho_\lambda(V^\lambda,0),\]
and
\[(W,\gamma)=\sum_{\lambda\in\Lambda}\rho_\lambda(W^\lambda,\gamma^\lambda).\]

Then $(Z,\zeta)$ is a section of $D$, $V$ a section of $\V$, 
$W\in\Gamma(TP^\perp)$ and $\gamma\in\Gamma(TP^\circ)$, and we have for all
$p'\in P\cap U$: 
\begin{align*}
&\bigl((Z,\zeta)\an{P}+(V,0)\an{P}+(W,\gamma)\bigr)(p')\\
=&\sum_{\lambda\in\Lambda}\rho_\lambda(p')\bigl((Z^\lambda,\zeta^\lambda)(p')
+(V^\lambda,0)(p')+(W^\lambda,\gamma^\lambda)(p')\bigr)\\
=&\sum_{\lambda\in\Lambda}\rho_\lambda(p')(X,\alpha)(p')=(X,\alpha)(p').
\end{align*}

Consider the $G$-invariant averages $Z_G$, $\zeta_G$,
$V_G$, $\gamma_G$, $W_G$ of $Z$, $\zeta$,
$V$, $\gamma$, and $W$. 
We get  for all $p'=[g,b]_H\in P\cap U$,
\begin{align*}
\bigl(Z_G+V_G+W_G\bigr)(p')&
=T_{[e,b]_H}\Phi_g\left(\int_{H}\left(\Phi^*_h(Z+V+W)\right)([e,b]_H)dh\right)\\
&=T_{[e,b]_H}\Phi_g\left(\int_{H}T_{[h,b]_H}\Phi_{h^{-1}}(Z+V+W)([h,b]_H)dh\right)\\
&=T_{[e,b]_H}\Phi_g\left(\int_{H}T_{[h,b]_H}\Phi_{h^{-1}}X([h,b]_H)dh\right)\\
&=T_{[e,b]_H}\Phi_g\left(\int_{H}X([e,b]_H)dh\right)=T_{[e,b]_H}\Phi_gX([e,b]_H)\\
&=(\Phi_g^*X)(p')=X(p'),
\end{align*}
where we have used the fact that $X$ is $G$-invariant and that $[g,b]_H$ in
$P\cap U$ implies $[e,b]_H$ and $[h,b]_H\in P\cap U$ for all $h\in H$.
In the same manner, we show that
\[\alpha(p')=(\zeta_G+\gamma_G)(p').\]
Thus we have 
\begin{align*}
(X,\alpha)=(Z_G,\zeta_G)+(V_G,0)+(W_G,\gamma_G)
\end{align*}
on $P\cap U$.
Since all involved distributions $\V$, $TP^\perp$, $TP^\circ$, and $D$ are
$G$-invariant, we still have $V_G\in\Gamma(\V)$, $W_G\in\Gamma(TP^\perp)$,
$\gamma_G\in\Gamma(TP^\circ)$, and $(Z_G,\zeta_G)\in\Gamma(D)$. 
But now we have $X-Z_G-V_G\in\Gamma(\T)$ and hence
$(X-Z_G-V_G)\an{P}\in\Gamma(TP)$.
Thus, the equality $(X-Z_G-V_G)\an{P}=W_G\in\Gamma(TP^\perp)$ leads to 
$W_G=(X-Z_G-V_G)\an{P}=0$.
The section $X_{\gamma_G}\in\Gamma(TP^\perp)$ satisfying
$\ip{X_{\gamma_G}}\rho\an{P}=\gamma_G$ is $G$-invariant, since $\gamma_G$
is. Hence it is tangent to $P$ and has to be the zero section. 
This shows that $\gamma_G=0$.

Thus, we have 
\[(X,\alpha)\an{P}=(Z_G,\zeta_G)\an{P}+(V_G,0)\an{P},\]
and simultaneously $(Z_G,\zeta_G)\an{P}\in 
\Gamma\left((D\cap(\T\operp\V^\circ_G))\an{P}\right)$.
Hence, there exists a section $(X',\alpha')$ of $D\cap(\T\operp\V^\circ_G)$
defined on a neighborhood $U'\subseteq U$ of $p$ 
such that $(X',\alpha')\an{P\cap U'}=(Z_G,\zeta_G)\an{P\cap U'}$, and hence 
$(X,\alpha)\an{P\cap U'}=(X',\alpha')\an{P\cap U'}+(V_G,0)\an{P\cap U'}$. 
By $G$-invariant average (in the same manner as above, with a partition of
unity if needed), we can assume that $(X',\alpha')$ is
$G$-invariant and that $U'$ is $G$-invariant. Hence, $(X',\alpha')$ is a
descending section of $D$, 
i.e., an element of $\D^G$. 
Thus, there exists $(\bar X',\bar\alpha')\in\bar\D$ such that $X'\sim_\pi\bar
X'$ and $\alpha'=\pi^*\bar\alpha'$, and $(X'_\bp,\alpha'_\bp)\in\D_\bp$
such that $X'_\bp\sim_{\iota_\bp}\bar X'$ and
$\alpha'_\bp=\iota_\bp^*\bar\alpha'$.
At last, we compute
\begin{align*}
\pi_P^*\alpha'_\bp&=\pi_P^*\iota_\bp^*\bar\alpha'
=\iota_P^*\pi^*\bar\alpha'=\iota_P^*\alpha'\\
&=\iota_P^*\alpha=\iota_P^*\pi^*\bar\alpha
=\pi_P^*\iota_\bp^*\bar\alpha=\pi_P^*\alpha_\bp,
\end{align*}
which yields $\alpha'_\bp=\alpha_\bp$ on the open set $\pi(U'\cap P)\subseteq
U_\bp$ with $\bar p\in\pi(U'\cap P)$. In the same manner, we compute
\begin{align*}
(T\iota_\bp\circ X'_\bp)\circ \pi_P&=\bar X'\circ\iota_\bp\circ\pi_P=\bar
X'\circ\pi\circ\iota_P\\
&=(T\pi \circ X')\circ\iota_P=(T\pi \circ (X'+V_G))\circ\iota_P=(T\pi \circ 
X)\circ\iota_P\\
&=\bar X\circ\pi\circ\iota_P=\bar X\circ\iota_\bp\circ\pi_P
=(T\iota_\bp\circ X_\bp)\circ \pi_P.
\end{align*}
Thus, we have shown that $(X_\bp,\alpha_\bp)$ is an element of $\D_\bp$, that
is, $(X_\bp,\alpha_\bp)$ is a section of $D_\bp$ and 
$(X_\bp,\alpha_\bp)(\bar p)\in D_\bp(\bar p)$.
\end{proof}

Analogously to the regular case, we also have:
\begin{theorem}\label{closed}
Let $(M,D)$ be a Dirac manifold with a proper Dirac action of a connected Lie
group $G$ on it.
Let $\bar P$ be a stratum of the quotient space $\bar M$.
Assume that  the Dirac structure $D$ is integrable and that
$D\cap(\T\operp\V_G^\circ)$ is spanned by its descending sections. 
Then the Dirac structure
$D_{\bp}$ on $\bp$ introduced in Theorem \ref{singred} is integrable. 
\end{theorem}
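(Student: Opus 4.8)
The plan is to exploit that, by Theorem~\ref{singred}, $D_\bp$ is already a Dirac structure on the \emph{manifold} $\bp$, hence a Lagrangian subbundle of $T\bp\operp T^*\bp$. For such a subbundle a section of the Pontryagin bundle of $\bp$ lies in $\Gamma(D_\bp)$ exactly when it is orthogonal to every section of $D_\bp$; therefore integrability of $D_\bp$ is equivalent to
\[\langle[a,b],c\rangle=0\qquad\text{for all }a,b,c\in\Gamma(D_\bp),\]
where $[\cdot,\cdot]$ is the Courant bracket \eqref{Courant_bracket} on $\bp$. By Remark~\ref{remark_section_bar_p} every local section of $D_\bp$ is the restriction to $\bp$ of the push-forward of a descending section of $D$, so it suffices to check this identity for $a=(X^1_\bp,\alpha^1_\bp)$, $b=(X^2_\bp,\alpha^2_\bp)$, $c=(X^3_\bp,\alpha^3_\bp)$ obtained from sections $(X^i,\alpha^i)\in\D^G$.

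Next I would set up the correspondence between objects on $M$ and on $\bp$ through the inclusion $\iota_P:P\hookrightarrow M$ and the surjective submersion $\pi_P:P\to\bp$, where $P=\pi^{-1}(\bp)$ is the orbit type stratum and $\pi_P=\pi\an{P}$, so that $\pi\circ\iota_P=\iota_\bp\circ\pi_P$. Since each $X^i$ satisfies $[X^i,\Gamma(\V)]\subseteq\Gamma(\V)$ it is a section of $\T$, and by Theorem~\ref{leaves_of_T} it is tangent to the leaf $P$; hence there is $X^i_P\in\mx(P)$ with $X^i_P\sim_{\iota_P}X^i$, and a diagram chase using $\pi\circ\iota_P=\iota_\bp\circ\pi_P$ gives $X^i_P\sim_{\pi_P}X^i_\bp$. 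For the one-forms, $\alpha^i=\pi^*\bar\alpha^i$ with $\bar\alpha^i\in\Omega^1(\bar M)$, and Lemma~\ref{V_circ_G} together with $\pi\circ\iota_P=\iota_\bp\circ\pi_P$ yields $\iota_P^*\alpha^i=\pi_P^*\alpha^i_\bp$. The key step is then to chase the Courant bracket through these two maps: because $X^1,X^2$ are $\iota_P$-related to $X^1_P,X^2_P$ one has $[X^1,X^2]\sim_{\iota_P}[X^1_P,X^2_P]\sim_{\pi_P}[X^1_\bp,X^2_\bp]$, while the naturality of $\ldr{}$, $\ip{}$ and $\dr$ under $\iota_P^*$ and $\pi_P^*$ gives
\[\iota_P^*\bigl(\ldr{X^1}\alpha^2-\ip{X^2}\dr\alpha^1\bigr)=\pi_P^*\bigl(\ldr{X^1_\bp}\alpha^2_\bp-\ip{X^2_\bp}\dr\alpha^1_\bp\bigr).\]

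Pairing the Courant bracket on $M$ with $(X^3,\alpha^3)$ and restricting along $\iota_P$, each summand becomes the $\pi_P$-pullback of the corresponding quantity on $\bp$, so that
\[\langle[(X^1,\alpha^1),(X^2,\alpha^2)],(X^3,\alpha^3)\rangle\circ\iota_P=\langle[(X^1_\bp,\alpha^1_\bp),(X^2_\bp,\alpha^2_\bp)],(X^3_\bp,\alpha^3_\bp)\rangle\circ\pi_P.\]
Since $D$ is integrable and $(X^i,\alpha^i)\in\D^G\subseteq\Gamma(D)$, the bracket $[(X^1,\alpha^1),(X^2,\alpha^2)]$ is a section of $D$, and the left-hand side vanishes because $D$ is Lagrangian. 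As $\pi_P$ is a surjective submersion, the identity $\langle[a,b],c\rangle\circ\pi_P=0$ forces $\langle[a,b],c\rangle=0$ on $\bp$, which by the first paragraph establishes the integrability of $D_\bp$.

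I expect the genuine work to lie in the naturality chain of the middle paragraph, namely in verifying carefully that the descending vector fields are tangent to the stratum $P$ (so that the $\iota_P$-related fields $X^i_P$ exist and the bracket and Lie-derivative identities are legitimate) and that the invariant one-forms descend through $\pi_P$; these two facts rest on Theorem~\ref{leaves_of_T} and Lemma~\ref{V_circ_G}, respectively, and once they are in place the remaining manipulations are the standard naturality properties of the Courant bracket.
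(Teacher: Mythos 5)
Your proof is correct, but it closes the argument by a different mechanism than the paper. The paper shows directly that the Courant bracket $[(X,\alpha),(Y,\beta)]$ of two descending sections is again a \emph{descending} section of $D$ --- this requires the explicit computation that $\ldr{X}\beta-\ip{Y}\dr\alpha$ annihilates $\Gamma(\V)$ and is $G$-invariant --- and then identifies its push-forward and restriction to $\bp$ with $([X_\bp,Y_\bp],\ldr{X_\bp}\beta_\bp-\ip{Y_\bp}\dr\alpha_\bp)$ via Proposition \ref{restriction_of_X}, so that the bracket downstairs lands in $\D_\bp=\Gamma(D_\bp)$. You instead exploit that Theorem \ref{singred} already makes $D_\bp$ a Lagrangian subbundle of $T\bp\operp T^*\bp$, replace integrability by the vanishing of $\left\langle [a,b],c\right\rangle$ on sections of $D_\bp$, and verify that vanishing by pulling the pairing back through $\iota_P$ and $\pi_P$ to $M$, where it holds because $D$ is integrable and Lagrangian. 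This buys you the right to skip the verification that the bracket upstairs is itself descending (you never need $\ldr{X^1}\alpha^2-\ip{X^2}\dr\alpha^1\in\Gamma(\V^\circ)^G$), at the price of invoking the orthogonality characterization and of needing a third section; the technical core --- tangency of descending fields to $P$ via Theorem \ref{leaves_of_T}, the identity $\iota_P^*\alpha^i=\pi_P^*\alpha^i_\bp$, and the naturality chain relating the bracket on $M$ restricted to $P$ with the bracket on $\bp$ --- is the same in both arguments, and your reduction to generators is legitimate because Remark \ref{remark_section_bar_p} gives $\Gamma(D_\bp)=\D_\bp$ exactly, so no tensoriality argument for the Courant three-form is even required.
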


\begin{proof}
Let $(X_\bp,\alpha_\bp)$ and $(Y_\bp,\beta_\bp)$ be sections of $D_\bp$. We
want to show that 
\[[(X_\bp,\alpha_\bp),(Y_\bp,\beta_\bp)]=([X_\bp,Y_\bp],\ldr{X_\bp}\beta_\bp
-\ip{Y_\bp}\dr
\alpha_\bp)\]
is also a section of $D_\bp$. From Remark \ref{remark_section_bar_p},
$(X_\bp,\alpha_\bp)$ 
and $(Y_\bp,\beta_\bp)$ are elements of $\mathcal{D}_\bp$ and thus we find
$(\bar X,\bar \alpha), 
(\bar Y,\bar
\beta) \in \bar \D$ such that 
$X_\bp\sim_{\iota_\bp}\bar X$, $Y_\bp\sim_{\iota_\bp}\bar Y$,
$\alpha_\bp=\iota_\bp^*\bar\alpha$, and
$\beta_\bp=\iota_\bp^*\bar\beta$. Furthermore, let $(X,\alpha)$ and
$(Y,\beta)$ be elements of $\D^G$, i.e., descending sections of $D$ such that 
$(X,\alpha)$ descends to $(\bar X,\bar\alpha)$ and $(Y,\beta)$ descends to
$(\bar Y,\bar\beta)$. By the proof of Theorem \ref{singred}, we can assume
that $(X,\alpha)$ and $(Y,\beta)$ are $G$-invariant.
The section $[(X,\alpha),(Y,\beta)]=([X,Y],\ldr{X}\beta-\ip{Y}\dr\alpha)$ is
then also a $G$-invariant section of $D$, since $D$ is integrable. We have 
\begin{align*}
(\ldr{X}\beta-\ip{Y}\dr\alpha)(\xi_M)=&\,\xi_M(\beta(X))+\dr\beta(X,\xi_M)
- \dr\alpha(Y,\xi_M)\\
=&\,0+
X(\beta(\xi_M))-\xi_M(\beta(X))-\beta([X,\xi_M])\\
&-Y(\alpha(\xi_M))+\xi_M(\alpha(Y))+\alpha([Y,\xi_M])\\
=&\,X(0)-0-\beta(0)- Y(0)+0+\alpha(0)
\end{align*}
for all $\xi\in\lie g$, where we have used that $\beta(X)$ and $\alpha(Y)\in
C^\infty(M)^G$. Hence $\ldr{X}\beta-\ip{Y}\dr\alpha\in\Gamma(\V^\circ)^G$,
$[(X,\alpha),(Y,\beta)]\in\D^G$  and
there exists
$(\bar Z,\bar \gamma)\in\bar \D$ such that $[X,Y]\sim_\pi \bar Z$  and
$\pi^*\bar\gamma=\ldr{X}\beta-\ip{Y}\dr\alpha$. Let $(Z_\bp,\gamma_\bp)$ be the
corresponding pair in $\D_\bp$. We want to show that $Z_\bp=[X_\bp,Y_\bp]$ and
$\ldr{X_\bp}\beta_\bp-\ip{Y_\bp}\dr
\alpha_\bp=\gamma_\bp$. 

Since we have $(X,\alpha)$,
$(Y,\beta)\in\Gamma(D\cap(\T\operp \V_G^\circ))$, there exist $\tilde X$ and
$\tilde Y$ in $\mx(P)$ such that $\tilde X\sim_{\iota_P} X$ and 
$\tilde Y\sim_{\iota_P} Y$. 
Set $\tilde \alpha=\iota_P^*\alpha$ and $\tilde\beta=\iota_P^*\beta$. 
We have $[\tilde X,\tilde Y]\sim_{\iota_P}[X,Y]$ and $[\tilde X,\tilde
Y]\in\mx(P)^G$. We have the equality $\iota_\bp\circ\pi_P=\pi\circ\iota_P$ and
consequently, since $\tilde X\sim_{\iota_P\circ\pi}\bar X$ and $\tilde
Y\sim_{\iota_P\circ\pi}\bar Y$, we have $\tilde X\sim_{\iota_\bp\circ\pi_P}\bar X$ 
and $\tilde
Y\sim_{\iota_\bp\circ\pi_P}\bar Y$. Hence, because  $X_\bp\sim_{\iota_\bp}\bar
X$, $Y_\bp\sim_{\iota_\bp}\bar Y$, and by Proposition \ref{restriction_of_X}, we get
$\tilde X\sim_{\pi_P}X_\bp$ and  $\tilde Y\sim_{\pi_P}Y_\bp$, and also
$[\tilde X,\tilde Y]\sim_{\pi_P}[X_\bp,Y_\bp]$. But in the same manner, we
have $[\tilde X,\tilde Y]\sim_{\iota_P\circ\pi}\bar Z$; thus $[\tilde X,\tilde
Y]\sim_{\iota_\bp\circ\pi_P}\bar Z$ and  $[X_\bp,Y_\bp]\sim_{\iota_\bp}\bar
Z$. Because of the uniqueness of $Z_\bp$ (Proposition \ref{restriction_of_X}), 
we get $Z_\bp=[X_\bp,Y_\bp]$. 

In the same manner, we have
\begin{align*}
\pi_P^*(\gamma_\bp)=\pi_P^*(\iota_\bp^*\bar\gamma)
=(\iota_\bp\circ\pi_P)^*(\bar\gamma)&
=(\pi\circ\iota_P)^*(\bar\gamma)=\iota_P^*(\ldr{X}\beta-\ip{Y}\dr\alpha)\\
&=(\ldr{\tilde
  X}\tilde\beta-\ip{ \tilde
  Y}\dr\tilde\alpha)\\
&=\pi_P^*(\ldr{X_\bp}\beta_\bp-\ip{Y_\bp}\dr\alpha_\bp).
\end{align*}
Thus, using the fact that $\pi_P$ is a smooth surjective submersion, we get
the equality of $\ldr{X_\bp}\beta_\bp-\ip{Y_\bp}\dr\alpha_\bp$ and $\gamma_\bp$.
\end{proof}

We end this subsection with examples. 
\begin{example}
Let $(M,\pois)$ be a smooth Poisson manifold with a canonical and proper action of a 
Lie group $G$ on it
(recall that the action of $G$ on $(M,\pois)$ is canonical
if $\{\Phi_g^*f_1,\Phi_g^*f_2\}=\Phi_g^*\{f_1,f_2\}$ for all $f_1,f_2\in
C^\infty(M)$ and $g\in G$).
Let $D_{\pois}$ be the Dirac structure associated to the Poisson 
structure; that is, $D_{\pois}(m)$ is defined by
\[D_{\pois}(m)=\{(X_f(m),\dr f(m))\mid f\in C^\infty(M)
\text{ and } X_f=\sharp(\dr f)\in\mx(M)\}\]
for all $m\in M$,
where $\sharp:T^*M\to TM$, $\dr f\mapsto X_f=\{\cdot, f\}$,  is the
homomorphism of vector bundles
 associated to $\pois$. Since the action of $G$ on $(M,\pois)$ is canonical, 
it is a Dirac action on $(M,D_{\pois})$. 

By Lemma \ref{description_V_G}, we know that $\V_G^\circ$ 
is generated by the exterior differentials of the 
$G$-invariant functions on $M$. Using the fact that the action of $G$ on $M$ is 
canonical, it is easy to check that the vector field $X_f$ associated to a 
$G$-invariant function $f$ is $G$-invariant. Hence, we get 
\[\left(D\cap(\T\operp\V_G^\circ)\right)(m)
=\erz\{(X_f(m),\dr f(m))\mid f\in C^\infty(M)^G\}.\]
This yields, automatically, that $D\cap(\T\operp\V_G^\circ)$ is spanned by 
its descending sections. Hence we can apply Theorems \ref{singred2} and \ref{singred} 
to the Dirac $G$-manifold $\left(M,D_{\pois}\right)$. Thus, each stratum of $\bar M$ inherits a 
Dirac structure $D_{\bar P}$ induced by $D_{\pois}$. Since $D_{\pois}$ is
integrable, the Dirac structure $D_{\bar P}$ is also integrable by Theorem
\ref{closed}.

We want to show that the codistribution $\mathsf{P}_1^\bp$ induced by $D_\bp$ 
(see Example \ref{ex_Dirac_dis}) on $\bp$ is equal to $T^*\bp$. To see this,  we show that 
$\dr f_\bp\in\Gamma\left(\mathsf{P}_1^\bp\right)$
for all $f_\bp\in C^\infty\left(\bp\right)$. Let $f_\bp\in C^\infty\left(\bp\right)$ and choose
$\bar f\in C^\infty\left(\bar M\right)$ with $\iota_\bp\left(\bar f\right)=f_\bp$. Set $f:=\pi^*\bar
f$. Then, as above, we have $\left(X_f,\dr f\right)\in\D^G$, and hence 
there exists $\bar{X}\in \mx\left(\bar M\right)$ such that $(X_f,\dr f)$
descends 
to $\left(\bar{X},\dr \bar{f}\right)\in\bar\D$. Since the restriction of $\dr
\bar f$ 
to $\bar P$ is equal to $\dr
f_\bp$, we get the existence of $X_\bp\in\mx\left(\bp\right)$ such that 
$\left(X_\bp,\dr f_\bp\right)$ in $\D_\bp$. Hence, $\dr f_\bp$ is a section of $\mathsf{P}_1^\bp$.

Since $D_\bp$ is integrable and $\mathsf{P}_1^\bp$ is constant dimensional and equal to  $T ^\ast \bar{P}$, the
Dirac structure $D_\bp$  defines a Poisson bracket $\pois_\bp$ on $C^\infty(\bar P)$ by
\[\poi{f_\bp,g_\bp}_\bp= - X_{f_\bp}(g_\bp)= X_{g_\bp}(f_\bp),\]
where $X_{f_\bp}$ and $X_{g_\bp}$ are such that $(X_{f_\bp},\dr f_\bp),
(X_{g_\bp},\dr g_\bp)\in \D_\bp=\Gamma(D_\bp)$. For a proof of this, see, for
example, \cite{Blankenstein00}. 

For $f_\bp, g_\bp\in C^\infty\left(\bp\right)$ choose, as above, extensions $\bar{f}, \bar{g}\in C^\infty\left(\bar M\right)$ and set $f=\pi^*\bar f$, $g=\pi^*\bar g$. Then we have
$\pi_P^*g_\bp=\iota_P^*g$ and there exists $\tilde X\in\mx(P)$ such that
$\tilde X\sim_{\iota_P}X_f$ and $\tilde X\sim_{\pi_P}X_{f_\bp}$. Thus, 
\begin{align*}
\iota_P^*\poi{f,g}&=-\iota_P^*(X_f(g))=-\tilde X(\iota_P^*g)=-\tilde
X(\pi_P^*g_\bp)\\
&=-\pi_P^*(X_{f_\bp}(g_\bp))
=\pi_P^*\poi{f_\bp,g_\bp}_\bp.
\end{align*}

This shows that, in the terminology of \cite{OrRa04}, $(M,\pois,P,G)$ is
always reducible
if $G$ acts properly and canonically on the Poisson manifold $M$ and $P$ is
a connected component of an orbit type manifold of the action.
\end{example}
\begin{example}
We consider the example of the proper action $\Phi$ of $G:=\mathds{S}^1$ on 
$M:=\R^3$ given by 
\[\alpha\cdot(x,y,z)=(x\cos\alpha-y\sin\alpha,x\sin\alpha+y\cos\alpha,z).\]
The orbit type manifolds of this action are $P_1=\{0\}\times\{0\}\times\R$,
that is,
$P_1=M_{H_1}$ with $H_1=\mathds{S}^1$, and $P_2=\R^3\setminus P_1$, so
$P_2=M_{H_2}$ with $H_2=\{1\}$. 
The orbit of a point $(x,y,z)\in\R^3$ is $\{(x',y',z')\mid
x'^2+y'^2=x^2+y^2\text{ and } z'=z\}$. Thus the reduced space $\bar M$
can be identified with $[0,+\infty)\times \R$ with the projection $\pi$ given by
\[\pi(x,y,z)=(x^2+y^2,z).\] 
It is easy to compute, for each $\alpha\in\mathds{S}^1$:
\begin{align*}
\Phi_\alpha^*(\partial_x)&=\cos\alpha\partial_x-\sin\alpha\partial_y,\\
\Phi_\alpha^*(\partial_y)&=\sin\alpha\partial_x+\cos\alpha\partial_y,\\
\Phi_\alpha^*(\partial_z)&=\partial_z
\end{align*}
and
\begin{align*}
\Phi_\alpha^*(\dr x)&=\cos\alpha\dr x-\sin\alpha\dr y,\\
\Phi_\alpha^*(\dr y)&=\sin\alpha\dr x+\cos\alpha\dr y,\\
\Phi_\alpha^*(\dr z)&=\dr z.
\end{align*}
Hence, the Dirac structure $D$ given as the span of the sections 
\[ (\partial_x,\dr y), (\partial_y,-\dr x), (\partial_z,0)\]
is $\mathds{S}^1$-invariant; that is, the Lie group $\mathds{S}^1$ acts on $(M,D)$ 
by Dirac actions.

The set $\D^{\mathds{S}^1}$ is spanned as a $C^\infty(M)$-module by the sections 
\[ (y\partial_x-x\partial_y,x\dr x+y\dr y) \quad \text{and} \quad (\partial_z,0).   \]
Note that the section $(x\partial_x+y\partial_y,y\dr x-x\dr y)$ is also
$\mathds{S}^1$-invariant but its cotangent part doesn't annihilate the vertical
space.
Also, since $\mathds{S}^1$ is Abelian, the vertical space is spanned by the
$\mathds{S}^1$-invariant vector field $y\partial_x-x\partial_y$ and  we only have to
consider $\mathds{S}^1$-invariant vector fields and not descending vector fields, in general.
Thus $\bar \D$ is the $C^\infty\left(\bar M\right)$-module generated by
the pairs $(\partial_{\bar z},0)$ and $(0,\bar x\dr \bar x)$, with the
coordinates $\bar x$ and $\bar z$ on $\bar M\simeq [0,\infty)\times\R$.

Then we have
$\mathcal{D}_{\bar P_1}=\erz_{C^\infty(\bar P_1)}\{(\partial_{\bar z},0)\}$
since $\bar x=0$ for all $p=(\bar x,\bar z)\in\bar P_1$. 
Hence, the Dirac structure $D_{\bar P_1}$ on 
$\bar{P}_1=P_1/G=P_1$
is  given as the span of the
section $(\partial_{\bar z},0)$. 

We have $\bar x\neq 0$ for all $p=(\bar x,\bar z)\in\bar P_2$.
Hence, since $$\mathcal{D}_{\bar P_2}=\erz_{C^\infty(\bar
  P_2)}\{(\partial_{\bar z},0), (0,\bar x\dr \bar x)\},$$
the Dirac structure
$D_{\bar P_2}$ on
$\bar P_2=P_2/G=(0,\infty)\times\R$ is given as the span of the sections 
$(\partial_{\bar z},0)$ and $(0,\dr \bar x)$.
\end{example}

\begin{example}
Consider here again Example \ref{bierstones_example}
with the Dirac structure given by
$D=(T\R^3\oplus\{0\})\operp(\{0\}\oplus T^*\R^3)$, i.e.,
$$D=\erz\left\{
(\partial_{x_1},0),(\partial_{y_1},0),(\partial_{z_1},0),
(0,\dr x_2), (0,\dr y_2),(0,\dr z_2)
\right\}.
$$
This Dirac bundle on $\R^3\times \R^3$ is obviously
invariant under the diagonal action of $\SO$, but the intersection
$D\cap(\T\operp\V_G^\circ)$ is not smooth. We have the 
descending sections
\[(x_1\partial_{ x_1}+y_1\partial_{ y_1}+z_1\partial_{ z_1},0),
(0,x_2\dr x_2+y_2\dr y_2+z_2\dr z_2)\]
of $D$. Let $(v,w)$ be a point where the function $f_3$ or one of the coordinates
$x_2,y_2,z_2$ 
vanishes. Then linear algebra arguments show that
there exists a linear combination
of $(x_1\partial_{ x_2}+y_1\partial_{ y_2}+z_1\partial_{ z_2},0)$,
$(x_2\partial_{x_1}+y_2\partial_{ y_1}
+z_2\partial_{ z_1},0)$,
$(y_1\partial_{x_1}-x_1\partial_{y_1}+y_2\partial_{x_2}-x_2\partial_{y_2},0)$,
$(z_1\partial_{y_1}-y_1\partial_{z_1}+z_2\partial_{y_2}-y_2\partial_{z_2},0)$,
and
$(x_1\partial_{z_1}-z_1\partial_{x_1}+x_2\partial_{z_2}-z_2\partial_{x_2},0)$
(sections of $\T\operp\{0\}$)
which is  an element of $((T\R^3\oplus\{0\})\operp\{0\})(v,w)$ and hence
of $D\cap(\T\operp\V_G^\circ)(v,w)$, but
there exists no open neighborhood of this point such that
this vector is the value at $(v,w)$ of a vector field defined on this whole 
neighborhood and having all its values in 
$D\cap(\T\operp\V_G^\circ)$.

Hence the reduction theorems of this paper (Theorems \ref{singred2}, \ref{singred} and 
\ref{closed}) do not apply to this example:
the action is canonical, but the hypothesis on smoothness of the intersection of $D$ with
$\T\operp\V_G^\circ$
is not satisfied.
\end{example}

\begin{example}
Let us illustrate Theorems \ref{singred} and \ref{closed}.
Consider again the manifold $M=\R^3\times\R^3$  this time with the 
(automatically proper) diagonal action of $G=\sfe$ on it, 
i.e., 
\[\begin{array}{cccc}
\Phi:&\sfe\times(\R^3\times\R^3)&\to&\R^3\times\R^3\\
&\left(\alpha,\begin{pmatrix}x_1\\
y_1\\z_1
\end{pmatrix},\begin{pmatrix}x_2\\
y_2\\z_2
\end{pmatrix}\right)&\mapsto&
\left(\begin{pmatrix}x_1\cos\alpha-y_1\sin\alpha\\
x_1\sin\alpha+y_1\cos\alpha\\
z_1\end{pmatrix},
\begin{pmatrix}x_2\cos\alpha-y_2\sin\alpha\\
x_2\sin\alpha+y_2\cos\alpha\\z_2
\end{pmatrix}\right)\!.
\end{array}
\]
The functions 
\begin{align*}
R_1(v,w)&=r_1^2(v,w)=x_1^2+y_1^2=\left\|\begin{pmatrix}
x_1\\
y_1
\end{pmatrix}\right\|^2,\\
R_2(v,w)&=r_2^2(v,w)=x_2^2+y_2^2=\left\|\begin{pmatrix}
x_2\\
y_2
\end{pmatrix}\right\|^2,\\
d(v,w)&=x_1y_2-y_1x_2=\det\begin{pmatrix}
x_1&x_2\\
y_1&y_2
\end{pmatrix},\\
s(v,w)&=x_1x_2+y_1y_2=\left\langle\begin{pmatrix}
x_1\\
y_1
\end{pmatrix},\begin{pmatrix}
x_2\\
y_2
\end{pmatrix}\right\rangle,\\
z_1(v,w)&=z_1,\quad z_2(v,w)=z_2\\
\end{align*}
are $\sfe$-invariant. They also characterize the
$\sfe$-orbits of the action since $d$ and $s$ determine
in a unique way the angle between the vectors $(x_1,y_1)$ and $(x_2,y_2)$. 
Hence, the reduced
manifold is the stratified space $\bar M=\pi(\R^3\times\R^3)\subseteq \R^6$,
where $\pi:\R^3\times\R^3\to\R^6$ is given by
$$\pi(v,w)=(R_1,R_2,d,s,z_1,z_2)(v,w).$$ We conclude that $\bar{M}$ is the
semi-algebraic set 
$$\bar{M}=\{(f_1,f_2,\delta,\sigma,z_1,z_2)\in\R^6\mid f_1,f_2\geq 0 
\text{ and } \sigma^2+\delta^2=f_1f_2\}.$$

The two strata of $\bar{M}$ are $\bar M_0=\{(0,0,0,0,z_1,z_2)\mid z_1,z_2\in
\R\}\subseteq \R^6$, corresponding to the orbit (isotropy) type manifold
$$M_{\sfe}=M_{(\sfe)}= \{(0,0,0,0,z_1,z_2)\mid z_1,z_2\in\R\}\subseteq \R^6$$
with trivial 
$\sfe $-action on it, and 
$\bar M_1=\{(f_1,f_2,\delta,\sigma,z_1,z_2)\in\R^6\mid (f_1,f_2)\neq(0,0)
\text{ and } \delta^2+\sigma^2=f_1f_2\}$, 
corresponding to the orbit (isotropy) type manifold
\begin{align*}
M_{\{0\}}&=M_{(\{0\})}\\
&= \{(x_1,y_1,z_1,x_2,y_2,z_2)\in \R^6\mid
(x_1,y_1)\neq (0,0) \text{ or } (x_2,y_2)\neq(0,0)\}.
\end{align*}

Let $U$ be the open set  $U: = \R_{>0}\times\R^4 \subset \R^5$.
Since the points $(f_1,f_2,\delta,\sigma,z_1,z_2)$ in $\bar M_1$ satisfy
$f_1>0$ or $f_2>0$, we have two charts for $\bar{M}_1$, namely 
$(\psi_1(U), \psi_1^{-1})$ and $(\psi_2(U), \psi_2^{-1})$, where
\[\begin{array}{lccc}
\psi_1:&\R_{>0}\times\R^4 &\to& \bar M_1\\
&(f_1,\delta,\sigma,z_1,z_2,)&\mapsto& 
\left(f_1,\frac{\delta^2+\sigma^2}{f_1},\delta,\sigma,z_1,z_2\right),\\
\psi_1^{-1}:&\psi_1(U)\subseteq \bar M_1&\to&  \R_{>0}\times\R^4\\
&(f_1,f_2,\delta,\sigma,z_1,z_2)&\mapsto& (f_1,\delta,\sigma,z_1,z_2)
\end{array}
\]
and 
\[\begin{array}{lccc}
\psi_2:& \R_{>0}\times\R^4&\to& \bar M_1\\
&(f_2,\delta,\sigma,z_1,z_2)&\mapsto& 
\left(\frac{\delta^2+\sigma^2}{f_2},f_2,\delta,\sigma,z_1,z_2\right),\\
\psi_2^{-1}:&\psi_2(U)\subseteq \bar M_1&\to&  \R_{>0}\times\R^4\\
&(f_1,f_2,\delta,\sigma,z_1,z_2)&\mapsto& (f_2,\delta,\sigma,z_1,z_2).
\end{array}
\]
We compute the distributions $\T=\T_G$ (note that $\V$ is spanned 
by $\sfe$-invariant sections because the Lie group is Abelian) and the 
codistribution $\V_G^\circ$. We have 
\[\V_G^\circ=\erz_{C^\infty(M)}
\left\{\begin{array}{c}
\dr z_1,\quad \dr z_2,\quad x_1\dr x_1+y_1\dr y_1,\\
x_2\dr x_2+y_2\dr y_2, \quad
x_1\dr y_2+y_2\dr x_1-x_2\dr y_1-y_1\dr x_2,\\
x_1\dr x_2+x_2\dr x_1+y_1\dr y_2+y_2\dr y_1 
\end{array}
\right\}
\]
and (see the appendix of \cite{JoRa10b})
\[\T=\erz_{C^\infty(M)}
\left\{\begin{array}{l}
X_1:=\partial_{ z_1},\quad X_2:=\partial_{ z_2},\\
 X_3:=x_1\partial_{x_1}+y_1\partial_{y_1}, \quad
X_4:=x_2\partial_{x_2}+y_2\partial_{y_2}, \\
X_5:=y_1\partial_{x_2}-x_1\partial_{y_2},\quad
X_6:=y_2\partial_{x_1}-x_2\partial_{y_1},\\
X_7:=x_1\partial_{x_2}+y_1\partial_{y_2},\quad
X_8:=x_2\partial_{x_1}+y_2\partial_{y_1},\\
X_9:=x_1\partial_{y_1}-y_1\partial_{x_1},\quad 
X_{10}:=x_2\partial_{y_2}-y_2\partial_{x_2} 
\end{array}
\right\}.
\]
Note that $\V$ is spanned on $M$ by
$X_9+X_{10}=x_1\partial_{y_1}-y_1\partial_{x_1}+x_2\partial_{y_2}-y_2\partial_{x_2}$.

We compute the flows associated to the spanning vector fields of $\T$ and find
(still using the coordinates $(v,w)=(x_1,y_1,z_1,x_2,y_2,z_2)$):
\begin{align*}
\phi^1_t(v,w)&=\left(\begin{pmatrix}x_1\\y_1\\z_1+t \end{pmatrix},
\begin{pmatrix}x_2\\y_2\\z_2 \end{pmatrix}\right),\quad
\phi^2_t(v,w)=\left(\begin{pmatrix}x_1\\y_1\\z_1 \end{pmatrix},
\begin{pmatrix}x_2\\y_2\\z_2+t \end{pmatrix}\right),\\
\phi^3_t(v,w)&=\left(\begin{pmatrix}e^tx_1\\e^ty_1\\z_1 \end{pmatrix},
\begin{pmatrix}x_2\\y_2\\z_2 \end{pmatrix}\right),\quad 
\phi^4_t(v,w)=\left(\begin{pmatrix}x_1\\y_1\\z_1 \end{pmatrix},
\begin{pmatrix}e^tx_2\\e^ty_2\\z_2 \end{pmatrix}\right),\\
\phi^5_t(v,w)&=\left(\begin{pmatrix}x_1\\y_1\\z_1 \end{pmatrix},
\begin{pmatrix}y_1t+x_2 \\-x_1t+y_2\\z_2 \end{pmatrix}\right),\,
\phi^6_t(v,w)=\left(\begin{pmatrix}y_2t+x_1\\-x_2t+y_1\\z_1 \end{pmatrix},
\begin{pmatrix}x_2\\y_2\\z_2 \end{pmatrix}\right),\\
\phi^7_t(v,w)&=\left(\begin{pmatrix}x_1\\y_1\\z_1 \end{pmatrix},
\begin{pmatrix}x_1t+x_2\\y_1t+y_2\\z_2 \end{pmatrix}\right),\,
\phi^8_t(v,w)=\left(\begin{pmatrix}x_2t+x_1\\y_2t+y_1\\z_1 \end{pmatrix},
\begin{pmatrix}x_2\\y_2\\z_2 \end{pmatrix}\right),\\
\phi^9_t(v,w)&=\left(\begin{pmatrix}x_1\cos t-y_1\sin t\\x_1\sin t +y_1\cos t\\z_1 
\end{pmatrix},
\begin{pmatrix}x_2\\y_2\\z_2 \end{pmatrix}\right),\\
\phi^{10}_t(v,w)&=\left(\begin{pmatrix}x_1\\y_1\\z_1 \end{pmatrix},
\begin{pmatrix}x_2\cos t-y_2\sin t\\x_2\sin t+y_2\cos
  t\\z_2 \end{pmatrix}\right),
\end{align*}
which are all easily verified to be $\sfe$-invariant.
It is easy to check that the two orbit (isotropy) type manifolds are the accessible
sets of the distribution $\T=\T_G$. 

We compute with this the flow $\bar \phi^i$ of the vector field $\bar X_i$ 
satisfying $X_i\sim_{\pi}\bar X_i$ for each  $i=1,\ldots,10$.
We have 
\begin{align*}
\bar\phi_t^1(f_1,f_2,\delta,\sigma,z_1,z_2)
&=\bar \phi^1_t(\pi(v,w))=\pi\circ\phi_t^1(v,w)=
(f_1,f_2,\delta,\sigma,z_1+t,z_2),\\
\bar\phi_t^2(f_1,f_2,\delta,\sigma,z_1,z_2)
&=\bar \phi^2_t(\pi(v,w))=\pi\circ\phi_t^2(v,w)=
(f_1,f_2,\delta,\sigma,z_1,z_2+t),\\
\bar \phi^3_t(f_1,f_2,\delta,\sigma,z_1,z_2)
&=(e^{2t}f_1,f_2,e^t\delta,e^t\sigma,z_1,z_2),\\
\bar \phi^4_t(f_1,f_2,\delta,\sigma,z_1,z_2)
&=(f_1,e^{2t}f_2,e^t\delta,e^t\sigma,z_1,z_2),\\
\bar\phi^5_t(f_1,f_2,\delta,\sigma,z_1,z_2)
&=(f_1,t^2f_1+f_2-2t\delta,-f_1t+\delta,\sigma,z_1,z_2),\\
\bar\phi^6_t(f_1,f_2,\delta,\sigma,z_1,z_2)
&=(f_1+t^2f_2+2t\delta,f_2,f_2t+\delta,\sigma,z_1,z_2),\\
\bar\phi^7_t(f_1,f_2,\delta,\sigma,z_1,z_2)
&=(f_1,t^2f_1+f_2+2t\sigma,\delta,f_1t+\sigma,z_1,z_2),\\
\bar\phi^8_t(f_1,f_2,\delta,\sigma,z_1,z_2)
&=(f_1+t^2f_2+2t\sigma,f_2,\delta,f_1t+\sigma,z_1,z_2),\\
\bar\phi^9_t(f_1,f_2,\delta,\sigma,z_1,z_2)
&=(f_1,f_2,\delta\cos t-\sigma\sin t,\delta\sin t+\sigma\cos t,z_1,z_2),\\
\bar\phi^{10}_t(f_1,f_2,\delta,\sigma,z_1,z_2)
&=(f_1,f_2,\sigma\sin t+\delta\cos t,\sigma\cos t-\delta\sin t,z_1,z_2).
\end{align*}
This leads to
\begin{align*}
\bar X_1(f_1,f_2,\delta,\sigma,z_1,z_2)&=\partial_{z_1},\qquad
\bar X_2(f_1,f_2,\delta,\sigma,z_1,z_2)=\partial_{z_2},\\
\bar X_3(f_1,f_2,\delta,\sigma,z_1,z_2)
&=2f_1\partial_{f_1}+\delta\partial_{\delta}+\sigma\partial_{\sigma},\\
\bar X_4(f_1,f_2,\delta,\sigma,z_1,z_2)
&=2f_2\partial_{f_2}+\delta\partial_{\delta}+\sigma\partial_{\sigma},\\
\bar X_5(f_1,f_2,\delta,\sigma,z_1,z_2)
&=-2\delta\partial_{f_2}-f_1\partial_{\delta},\\
\bar X_6(f_1,f_2,\delta,\sigma,z_1,z_2)
&=2\delta\partial_{f_1}+f_2\partial_{\delta},\\
\bar X_7(f_1,f_2,\delta,\sigma,z_1,z_2)
&=2\sigma\partial_{f_2}+f_1\partial_{\sigma},\\
\bar X_8(f_1,f_2,\delta,\sigma,z_1,z_2)
&=2\sigma\partial_{f_1}+f_2\partial_{\sigma},\\
\bar X_9(f_1,f_2,\delta,\sigma,z_1,z_2)
&=-\sigma\partial_{\delta}+\delta\partial_{\sigma},\\
\bar
X_{10}(f_1,f_2,\delta,\sigma,z_1,z_2)
&=\sigma\partial_{\delta}-\delta\partial_{\sigma}=-\bar X_9.
\end{align*}
Recalling that the tangent bundle to the manifold $\bar M_1$ is the kernel
of the one-form $\dr(f_1f_2-\delta^2-\sigma^2)=f_1\dr f_2+f_2\dr f_1
-2\sigma\dr \sigma-2\delta\dr \delta$,
we see that the two strata of $\bar M$ are indeed the accessible sets of the
distribution spanned by $\bar X_1,\ldots, \bar X_{10}$.

Consider the Dirac structure $D\subseteq TM\operp T^*M$ spanned by the pairs
\[(\partial_{x_1},\dr y_1), (\partial_{y_1},-\dr x_1), (\partial_{z_1},0),
(\partial_{x_2},-\dr y_2),(\partial_{y_2},\dr x_2),(0,\dr z_2).
\]
Comparing this with the sections of $\T$ and $\V_G^\circ$ given above, we find
\begin{align*}
\D^{\sfe}&=\erz_{C^\infty(M)^{\sfe}}\left\{
\begin{array}{c}
(\partial_{z_1},0), (0,\dr z_2),\\
(-x_1\partial_{y_1}+y_1\partial_{x_1},x_1\dr x_1+y_1\dr y_1),\\
(x_2\partial_{y_2}-y_2\partial_{x_2},x_2\dr x_2+y_2\dr y_2),\\
(-x_1\partial_{x_2}-y_2\partial_{y_1}-x_2\partial_{x_1}-y_1\partial_{y_2},\\
x_1\dr y_2+y_2\dr x_1-x_2\dr y_1-y_1\dr x_2),\\
(x_1\partial_{y_2}-x_2\partial_{y_1}-y_1\partial_{x_2}+y_2\partial_{x_1},\\
x_1\dr x_2+x_2\dr x_1+y_1\dr y_2+y_2\dr y_1)
\end{array}
\right\}\\
&=\erz_{C^\infty(M)^{\sfe}}\left\{
\begin{array}{c}
(\partial_{z_1},0), (0,\dr z_2),
\left(-X_9,\frac{1}{2}\dr R_1\right),\\
\left(X_{10},\frac{1}{2}\dr R_2\right),
(-X_7-X_8,\dr d),
(X_6-X_5,\dr s)
\end{array}
\right\}.
\end{align*}
Hence, we get 
\begin{align*}
\bar\D&=\erz_{C^\infty(\bar M)}\left\{
\begin{array}{c}
(\partial_{z_1},0), (0,\dr z_2),
\left(-\bar X_9,\frac{1}{2}\dr f_1\right),\\
\left(\bar X_{10},\frac{1}{2}\dr f_2\right),
(-\bar X_7-\bar X_8,\dr \delta),
(\bar X_6-\bar X_5,\dr \sigma)
\end{array}
\right\}\\
&=\erz_{C^\infty(\bar M)}\left\{
\begin{array}{c}
(\partial_{z_1},0), (0,\dr z_2),\\
\left(\sigma\partial_{\delta}-\delta\partial_{\sigma},\frac{1}{2}\dr f_1\right),
\left(\sigma\partial_{\delta}-\delta\partial_{\sigma},\frac{1}{2}\dr f_2\right),\\
(-2\sigma(\partial_{f_1}+\partial_{f_2})-(f_1+f_2)\partial_{\sigma},\dr \delta),\\
(2\delta(\partial_{f_1}+\partial_{f_2})+(f_1+f_2)\partial_{\delta},\dr \sigma)
\end{array}
\right\}.
\end{align*}
Recall the definition of one-forms on the stratified space 
$\bar{M}$ in Subsection \ref{pw_of_vf}. The ``one-forms'' $\dr f_1$ and $\dr f_2$ 
are \emph{not} derivatives of smooth
coordinates; they vanish at the points where $f_1$ and, respectively, $f_2$
vanish, by definition. 

Now we compute the induced Dirac structures on the two strata
$\bar M_0$ and $\bar M_1$.
The pairs $\left(-\bar X_9,\frac{1}{2}\dr f_1\right),
\left(\bar X_{10},\frac{1}{2}\dr f_2\right),
(-\bar X_7-\bar X_8,\dr \delta)$ and 
$(\bar X_6-\bar X_5,\dr \sigma)
$
are all zero on $\bar M_0$. So we get $\D_{\bar M_0}=\erz_{C^\infty(\bar M_0)}\{
(\partial_{z_1},0), (0,\dr z_2)\}$ and hence
$D_{\bar M_0}(\bar m)=\erz_\R\{(\partial_{z_1}\an{\bar m},0), (0,\dr z_2(\bar{m}))\}$
for all $\bar m\in\bar M_0$.

For the stratum $\bar M_1$, we give the Dirac structure
in the two charts $(\psi_1(U),\psi_1^{-1})$ and $(\psi_2(U),\psi_2^{-1})$.
We start with $(\psi_1(U),\psi_1^{-1})$, that is, the points of $\bar M_1$ where $f_1$ does not
vanish.
We have \[\psi_1^*\dr f_2
=\frac{1}{f_1}\left(-\left(\frac{\sigma^2+\delta^2}{f_1}\right)\dr f_1
+2\sigma\dr\sigma+2\delta\dr\delta
\right)
\quad \text{
and }
\quad \partial_{f_2}\sim_{\psi_1^{-1}}0.
\]
Hence, the pairs in
$\bar \D$ restrict to sections on $\psi_1(U)\subseteq \bar M_1$ that 
span the Dirac structure defined by
\begin{align*}
&D_{\bar M_1}(f_1,\sigma,\delta,z_1,z_2)\\
=&\erz_\R\left\{
\begin{array}{c}
(\partial_{z_1},0), (0,\dr z_2),
\left(2\sigma\partial_{\delta}-2\delta\partial_{\sigma},\dr f_1\right),\\
\left(0,\dr f_1
-\frac{1}{f_1}\left(-\left(\frac{\sigma^2+\delta^2}{f_1}\right)\dr f_1
+2\sigma\dr\sigma+2\delta\dr\delta
\right)\right),\\
\left(-2\sigma\partial_{f_1}
-\left(f_1+\frac{\sigma^2+\delta^2}{f_1}\right)\partial_{\sigma},
\dr \delta\right)),\\
\left(2\delta\partial_{f_1}
+\left(f_1+\frac{\sigma^2+\delta^2}{f_1}\right)\partial_{\delta},\dr \sigma\right))
\end{array}
\right\}(f_1,\sigma,\delta,z_1,z_2)\\
=&\erz_\R\left\{
\begin{array}{c}
(\partial_{z_1},0), (0,\dr z_2),
\left(2\sigma\partial_{\delta}-2\delta\partial_{\sigma},\dr f_1\right),\\
\left(0,
\left(f_1+\frac{\sigma^2+\delta^2}{f_1}\right)\dr f_1
-2\sigma\dr\sigma-2\delta\dr\delta
\right),\\
\left(-2\sigma\partial_{f_1}
-\left(f_1+\frac{\sigma^2+\delta^2}{f_1}\right)\partial_{\sigma},
\dr \delta\right),\\
\left(2\delta\partial_{f_1}
+\left(f_1+\frac{\sigma^2+\delta^2}{f_1}\right)\partial_{\delta},\dr \sigma\right)
\end{array}
\right\}(f_1,\sigma,\delta,z_1,z_2)
\end{align*}
for all $(f_1,\sigma,\delta,z_1,z_2)$ in $U$.
Since 
\begin{align}
&\left(0,
\left(f_1+\frac{\sigma^2+\delta^2}{f_1}\right)\dr f_1-2\sigma\dr\sigma
-2\delta\dr\delta
\right)\nonumber\\
=&\left(f_1+\frac{\sigma^2+\delta^2}{f_1}\right)
\left(2\sigma\partial_{\delta}-2\delta\partial_{\sigma},\dr f_1\right)\nonumber\\
&
-2\delta\left(-2\sigma\partial_{f_1}-\left(f_1
+\frac{\sigma^2+\delta^2}{f_1}\right)\partial_{\sigma},
\dr \delta\right)\nonumber\\
&-2\sigma\left(2\delta\partial_{f_1}
+\left(f_1+\frac{\sigma^2+\delta^2}{f_1}\right)\partial_{\delta},\dr
\sigma\right)\label{add_section},
\end{align}
this leads to
\begin{align}\label{DbarM1}
&D_{\bar{M}_1}(f_1,\sigma,\delta,z_1,z_2)\nonumber\\
=&\erz_\R\left\{
\begin{array}{c}
(\partial_{z_1},0), (0,\dr z_2),
\left(2\sigma\partial_{\delta}-2\delta\partial_{\sigma},\dr f_1\right),\\
\left(-2\sigma\partial_{f_1}
-\left(f_1+\frac{\sigma^2+\delta^2}{f_1}\right)\partial_{\sigma},
\dr \delta\right),\\
\left(2\delta\partial_{f_1}
+\left(f_1+\frac{\sigma^2+\delta^2}{f_1}\right)\partial_{\delta},\dr \sigma\right)
\end{array}
\right\}(f_1,\sigma,\delta,z_1,z_2).
\end{align}
In the same manner, we get in the chart $(\psi_2(U),\psi_2^{-1})$:
\begin{align*}
&D_{\bar{M}_1}(f_2,\sigma,\delta,z_1,z_2)\\
=&\erz_\R\left\{
\begin{array}{c}
(\partial_{z_1},0), (0,\dr z_2),
\left(2\sigma\partial_{\delta}-2\delta\partial_{\sigma},\dr f_2\right),\\
\left(-2\sigma\partial_{f_2}
-\left(f_2+\frac{\sigma^2+\delta^2}{f_2}\right)\partial_{\sigma},
\dr \delta\right),\\
\left(2\delta\partial_{f_2}
+\left(f_2+\frac{\sigma^2+\delta^2}{f_2}\right)\partial_{\delta},\dr \sigma\right)
\end{array}
\right\}(f_2,\sigma,\delta,z_1,z_2).
\end{align*}
It is easy to verify in both charts that this indeed defines  a Dirac bundle 
on $\bar M_1$; it is constant $\dim \bar M_1$-dimensional
and Lagrangian relative to the pairing on $T\bar M_1\operp T^*\bar M_1$.
\medskip

Since the Dirac structure $D$ on $M$ is integrable, we check that the reduced
Dirac 
manifolds $(\bar{M}_0,D_{\bar{M}_0})$ and $(\bar{M}_1,D_{\bar{M}_1})$ are also integrable.
For $(\bar M_0,D_{\bar M_0})$ this is obvious. For 
$(\bar{M}_1,D_{\bar{M}_1})$, we have to compute several Courant brackets.
Since the expressions for $D_{\bar{M}_1}$ are the same in both charts
$(\psi_1(U), \psi_1^{-1})$ 
and $(\psi_2(U), \psi_2^{-1})$, it suffices to carry out these computations only in the first chart.
Denote by $(X_i,\alpha_i)$, $i=1,\ldots,5$, the five spanning sections
of $D_{\bar M_1}$ in the chart $(\psi_1(U), \psi_1^{-1})$ in the order of formula
\eqref{DbarM1}. 
We only have to check that
$[(X_i,\alpha_i),(X_j,\alpha_j)]\in\Gamma(D_{\bar M_1})$ for
all $i,j\in\{1,\ldots,5\}$.

To see this, we begin by noting that
$$[(X_1,\alpha_1),(X_j,\alpha_j)]=[(X_2,\alpha_2),(X_j,\alpha_j)]=0\in\Gamma(D_{\bar{M}_1})$$
for 
$j=1,\ldots,5$. Next, we compute
\begin{align*}
&[(X_3,\alpha_3),(X_4,\alpha_4)]\\
=&\left[\left(2\sigma\partial_{\delta}-2\delta\partial_{\sigma},\dr f_1\right),
\left(-2\sigma\partial_{f_1}
-\left(f_1+\frac{\sigma^2+\delta^2}{f_1}\right)\partial_{\sigma},
\dr \delta\right)\right]\\
=&\Biggl(2\sigma\cdot\frac{-2\delta}{f_1}\partial_{\sigma}
-2\delta\cdot(-2)\partial_{f_1}
-2\delta\cdot\frac{-2\sigma}{f_1}\partial_{\sigma}
+\left(f_1+\frac{\sigma^2+\delta^2}{f_1}\right)\cdot 2\partial_{\delta},\\
&\qquad\qquad\qquad\qquad\qquad\qquad\qquad\qquad\dr\left( \dr\delta(2\sigma\partial_{\delta}-2\delta\partial_{\sigma})\right)
\Biggr)\\
=&\,2\left(
2\delta\partial_{f_1}
+\left(f_1+\frac{\sigma^2+\delta^2}{f_1}\right)\partial_{\delta},
\dr\sigma
\right)=2(X_5,\alpha_5)\in\Gamma(D_{\bar{M}_1}).
\end{align*}
In the same manner,
\begin{align*}
&[(X_3,\alpha_3),(X_5,\alpha_5)]\\
=&\left[\left(2\sigma\partial_{\delta}-2\delta\partial_{\sigma},\dr f_1\right),
\left(2\delta\partial_{f_1}
+\left(f_1+\frac{\sigma^2+\delta^2}{f_1}\right)\partial_{\delta},
\dr \sigma\right)\right]\\
&=-2\left(
-2\sigma\partial_{f_1}
-\left(f_1+\frac{\sigma^2+\delta^2}{f_1}\right)\partial_{\sigma},
\dr\delta
\right)=-2(X_4,\alpha_4)\in\Gamma(D_{\bar
  M_1})
\end{align*}
and 
\begin{align*}
&[(X_4,\alpha_4),(X_5,\alpha_5)]\\
=&\left[\left(
-2\sigma\partial_{f_1}
-\left(f_1+\frac{\sigma^2+\delta^2}{f_1}\right)\partial_{\sigma},
\dr\delta
\right),\left(2\delta\partial_{f_1}
+\left(f_1+\frac{\sigma^2+\delta^2}{f_1}\right)\partial_{\delta},
\dr \sigma\right)
\right]\\
=&\left(-2\sigma\cdot\left(1-\frac{\sigma^2+\delta^2}{f_1^2}\right)\partial_{\delta}
-\left(f_1+\frac{\sigma^2+\delta^2}{f_1}\right)\cdot\frac{2\sigma}{f_1}\partial_{\delta}\right.\\
&\left.\quad+2\delta\cdot\left(1-\frac{\sigma^2+\delta^2}{f_1^2}\right)\partial_{\sigma}
+\left(f_1+\frac{\sigma^2+\delta^2}{f_1}\right)\cdot\frac{2\delta}{f_1}\partial_{\sigma},
-\dr\left(f_1+\frac{\sigma^2+\delta^2}{f_1}\right)
\right)\\
=&\left(-4\sigma\partial_{\delta}+4\delta\partial_{\sigma},
-\dr f_1+\frac{\sigma^2+\delta^2}{f_1^2}\dr
  f_1-\frac{2\sigma\dr\sigma}{f_1}-\frac{2\delta\dr\delta}{f_1}\right)\\
=&-2(2\sigma\partial_{\delta}-2\delta\partial_{\sigma},
\dr f_1)+\frac{1}{f_1}\left(0,\left(f_1+\frac{\sigma^2+\delta^2}{f_1}\right)\dr f_1-2\sigma\dr\sigma
-2\delta\dr\delta
\right).
\end{align*}
This is a section of $D_{\bar M_1}$ since the first summand is the pair
$(X_3,\alpha_3)$ and the second summand is shown in \eqref{add_section}
to be a section of $D_{\bar M_1}$.

Thus, we have  directly verified in this example the conclusions of Theorems \ref{singred} and \ref{closed}.
\end{example}

\subsection{Reduction of dynamics}
\begin{definition}
The function $f\in C^\infty(M)$ will be called \emph{admissible} if there
exists a vector field $X_f\in \mx(M)$ such that 
\begin{equation}\label{admissible}
(X_f,\dr f)\in\Gamma(D).
\end{equation}
\end{definition}
Note that  we have $(X_f+Y,\dr f)\in\Gamma(D)$ for all
sections $Y$ of $\mathsf{G}_0$. Hence if the distribution
$\mathsf{G}_0$ is not trivial, equation \eqref{admissible} does not define a
unique vector field $X_f$.
\begin{theorem}
Let $f\in C^\infty(M)^G$ be admissible. Then there exists $X_f\in\mx(M)^G$
such that $(X_f,\dr f)$ is a section of $D$. Hence, for each
stratum $\bp$ of $\bar M$ satisfying the conditions of Theorem \ref{singred},
there exists a section $(X_\bp,\alpha_\bp)$ such that $X_f\sim_\pi \bar
X\in\mx(\bar M)$, $X_\bp\sim_{\iota_\bp}\bar X$, and
$\alpha_\bp=\iota_\bp^*\bar\alpha$, $\pi^*\bar\alpha=\dr f$. 
The vector field $X_\bp$ is a solution of the implicit Hamiltonian system
\[(X_\bp,\dr f_\bp)\in\Gamma(D_\bp),\]
where $f_\bp\in C^\infty(\bp)$ is the function defined by
$f_\bp=\iota_\bp^*\bar f$, with $\bar f\in C^\infty(\bar M)$ defined by
$\pi^*(\bar f\,)=f$.
If $X_\bp'$ is another solution of this equation, there exists an element
$Y$ of $\Gamma(\mathsf{G}_0)^G$ such that $X_f+Y$ descends to a vector field
on $\bar M$ that restricts to $X'_\bp$.
\end{theorem}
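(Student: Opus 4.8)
The plan is to produce a $G$-invariant Hamiltonian vector field for $f$, recognise its descending section, feed it into Theorem \ref{singred}, and then analyse the ambiguity of the reduced solution via the distribution $\mathsf{G}_0$. Starting from any $X_f^0\in\mx(M)$ with $(X_f^0,\dr f)\in\Gamma(D)$, which exists by admissibility, I note that $f\in C^\infty(M)^G$ makes $\dr f$ $G$-invariant with $\dr f(\xi_M)=\xi_M(f)=0$, so $\dr f\in\Gamma(\V^\circ)^G$ and $(\dr f)_G=\dr f$. As the action is canonical, $D$ is a $G$-invariant generalized distribution, hence the $G$-invariant average of $(X_f^0,\dr f)$ is again a section of $D$; setting $X_f:=(X_f^0)_G\in\mx(M)^G$ gives $(X_f,\dr f)\in\Gamma(D)$. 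Since $X_f$ is $G$-invariant it lies in $F^G$, so $[X_f,\Gamma(\V)]\subseteq\Gamma(\V)$, and together with $\dr f\in\Gamma(\V^\circ)^G$ this shows $(X_f,\dr f)\in\D^G$ is a descending section.

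Next I would push the section down. Writing $f=\pi^*\bar f$ with $\bar f\in C^\infty(\bar M)$ and setting $\bar\alpha:=\dr\bar f\in\Omega^1(\bar M)$, Lemma \ref{lem:push_down_der} and Proposition \ref{corres-one-forms} yield $X_f\sim_\pi\bar X$ and $\pi^*\bar\alpha=\dr f$, so $(\bar X,\bar\alpha)\in\bar\D$. By Remark \ref{remark_section_bar_p} its restriction to $\bp$ is a pair $(X_\bp,\alpha_\bp)\in\D_\bp=\Gamma(D_\bp)$, and $\alpha_\bp=\iota_\bp^*\dr\bar f=\dr(\iota_\bp^*\bar f)=\dr f_\bp$. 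Thus $(X_\bp,\dr f_\bp)\in\Gamma(D_\bp)$, which is exactly the reduced implicit Hamiltonian system, and Theorem \ref{singred} guarantees that this lives inside the reduced Dirac structure on $\bp$.

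For the uniqueness clause, let $X'_\bp$ be a second solution. As $D_\bp$ is Lagrangian, $(X_\bp,\dr f_\bp)-(X'_\bp,\dr f_\bp)=(X_\bp-X'_\bp,0)\in\Gamma(D_\bp)$, so $X_\bp-X'_\bp$ is a section of the $\mathsf{G}_0$-distribution of $D_\bp$. I would lift $X'_\bp$ exactly as above to a $G$-invariant descending section $(X'',\alpha'')\in\D^G$ with $\bar{X''}\an{\bp}=X'_\bp$ and $\iota_\bp^*\bar{\alpha''}=\dr f_\bp$. Then $(W,\omega):=(X''-X_f,\alpha''-\dr f)\in\D^G$ has $\iota_\bp^*\bar\omega=0$; because $\omega\in\Gamma(\V^\circ)^G\subseteq\Gamma(\V^\circ_G)$ and $\iota_P^*$ is injective on $\V^\circ_G\an{P}$ by Lemma \ref{V_circ_G}, this forces $\omega\an{P}=0$. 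Consequently $(W(p),0)\in D(p)$ for every $p\in P$, so $W\an{P}$ takes values in $\mathsf{G}_0$, while $\bar W\an{\bp}=X'_\bp-X_\bp$.

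The remaining, and I expect hardest, step is to replace $W$ by a genuine $Y\in\Gamma(\mathsf{G}_0)^G$, i.e. with $(Y,0)\in\Gamma(D)$, that still reduces on $\bp$ to $X'_\bp-X_\bp$; equivalently one must lift the $\mathsf{G}_0$-valued section $W\an{P}$, defined only over the stratum $P$, to a $G$-invariant section of $\mathsf{G}_0$ over a $G$-invariant neighbourhood of $P$ in $M$, modulo $\V$. One cannot simply take $Y=W$, since $W$ leaves $\mathsf{G}_0$ away from $P$ (its companion $\omega$ is nonzero there). I would resolve this with the averaging machinery of Subsection \ref{tubeth}, in the spirit of the proof of Theorem \ref{singred}: using smoothness of $\mathsf{G}_0$ to choose local sections of $\mathsf{G}_0$ paired with $0$ in $D$ along $P$, patching them by a partition of unity over a neighbourhood of $P$, and taking the $G$-invariant average. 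Since $D$ (hence $\mathsf{G}_0$) and $P$ are $G$-invariant, averaging preserves membership in $\Gamma(\mathsf{G}_0)$ and, just as the $G$-averages of the complementary pieces $TP^\perp$ and $TP^\circ$ vanish along $P$ in Theorem \ref{singred}, preserves the reduction to $\bp$. The resulting $Y\in\Gamma(\mathsf{G}_0)^G$ then satisfies $\bar Y\an{\bp}=X'_\bp-X_\bp$, and because $X_f+Y$ is $G$-invariant it descends to $\bar X+\bar Y$, whose restriction to $\bp$ equals $X_\bp+(X'_\bp-X_\bp)=X'_\bp$, completing the argument.
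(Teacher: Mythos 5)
The existence and descent parts of your argument coincide with the paper's: the printed proof likewise takes any $X$ with $(X,\dr f)\in\Gamma(D)$, notes that $\dr f$ is $G$-invariant and annihilates $\V$ (so it is unchanged by averaging), averages to obtain $(X_G,\dr f)\in\Gamma(D)$ with $X_G\in\mx(M)^G$, and sets $X_f:=X_G$, so that $(X_f,\dr f)\in\D^G$; the passage to $(X_\bp,\dr f_\bp)\in\Gamma(D_\bp)$ is then exactly the mechanism of Remark \ref{remark_section_bar_p} and Theorem \ref{singred}, as you describe. Up to that point your proposal is correct and is essentially the paper's proof.

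For the final clause the paper says only that ``the remaining of the theorem follows immediately,'' so you have in fact attempted more than the printed argument. Your reduction is sound as far as it goes: lifting $X'_\bp$ to a descending section $(X'',\alpha'')$, forming $(W,\omega)=(X''-X_f,\alpha''-\dr f)\in\D^G$, and invoking the injectivity of $\iota_P^*$ on $\V^\circ_G\an{P}$ (Lemma \ref{V_circ_G}) to get $\omega\an{P}=0$ is correct. But the step you yourself flag as hardest contains a genuine gap, in two places. First, $(W(p),0)\in D(p)$ for $p\in P$ only puts $W(p)$ in the \emph{pointwise} kernel of $D$; the distribution $\mathsf{G}_0$ is defined through smooth local sections $(X,0)$ of $D$, and the pointwise kernel may be strictly larger, so you have not shown $W(p)\in\mathsf{G}_0(p)$. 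Second, even granting that, choosing local sections of $\mathsf{G}_0$ through the values $W(p)$ and patching with a partition of unity yields a section agreeing with $W$ only at the chosen base points, not along all of $P$ modulo $\V$, which is what is needed for $\overline{X_f+Y}$ to restrict to $X'_\bp$ on all of $\bp$; in the proof of Theorem \ref{singred} the partition of unity glues decompositions valid on whole neighbourhoods $P\cap U_x$, not merely at single points, and you have no such neighbourhood-wise decomposition of $W$ into $\mathsf{G}_0$-valued pieces. So the uniqueness clause remains unproven in your write-up --- though, to be fair, the paper itself supplies no more detail there.
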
  
\begin{proof}
Let $f\in C^\infty(M)^G$ be admissible. Let $X$ be a vector field satisfying
$(X,\dr f)\in\Gamma(D)$, and consider the average of this pair. Since $\dr
f$ is already $G$-invariant, it remains unchanged and the $G$-invariant
average of $(X,\dr f)$ is $(X_G,\dr f)$ with a $G$-invariant vector field
$X_G$. Since $D$ is $G$-invariant, the section $(X_G,\dr f)$ is also a
section of $D$. If $X_G$ disappears, the solutions of the implicit Hamiltonian system
span the generalized tangent distribution $\mathsf{G}_0$.  Set $X_G= :X_f$.
Then  the first statement is proved, and $(X_f,\dr f)\in\D^G$. The remainder of the
theorem follows immediately.
\end{proof}
\bibliographystyle{amsplain}

\begin{thebibliography}{10}

\bibitem{ArCuGo91}
J.M. Arms, R.H. Cushman, and M.J. Gotay, \emph{A universal reduction procedure
  for {H}amiltonian group actions}, The {G}eometry of {H}amiltonian {S}ystems
  ({B}erkeley, {CA}, 1989), Math. Sci. Res. Inst. Publ., vol.~22, Springer, New
  York, 1991, pp.~33--51.

\bibitem{Aronszajn67}
N.~Aronszajn, \emph{Subcartesian and subriemannian spaces}, Notices Amer. Math.
  Soc. \textbf{14} (1967), 111.

\bibitem{Bierstone75}
E.~Bierstone, \emph{Lifting isotopies from orbit spaces}, Topology \textbf{14}
  (1975), no.~3, 245--252.

\bibitem{Blankenstein00}
G.~Blankenstein, \emph{Implicit {H}amiltonian {S}ystems: {S}ymmetry and
  {I}nterconnection}, Ph.D.Thesis, University of Twente, 2000.

\bibitem{BlRa04}
G.~Blankenstein and T.S. Ratiu, \emph{Singular reduction of implicit
  {H}amiltonian systems}, Rep. Math. Phys. \textbf{53} (2004), no.~2, 211--260.

\bibitem{BlvdS01}
G.~Blankenstein and A.J. van~der Schaft, \emph{Symmetry and reduction in
  implicit generalized {H}amiltonian systems}, Rep. Math. Phys. \textbf{47}
  (2001), no.~1, 57--100.

\bibitem{BuHeMuSa93}
K.~Buchner, M.~Heller, P.~Multarzy\'{n}ski, and W.~Sasin, \emph{Literature on
  differential spaces}, Acta Cosmologica \textbf{19} (1993), 111--129.

\bibitem{BuCaGu07}
H.~Bursztyn, G.~R. Cavalcanti, and M.~Gualtieri, \emph{{Reduction of Courant
  algebroids and generalized complex structures.}}, Adv. Math. \textbf{211}
  (2007), no.~2, 726--765.

\bibitem{Courant90a}
T.~J. Courant, \emph{{Dirac manifolds.}}, Trans. Am. Math. Soc. \textbf{319}
  (1990), no.~2, 631--661.

\bibitem{CoWe88}
T.~J. Courant and A.~Weinstein, \emph{Beyond {P}oisson structures}, Action
  hamiltoniennes de groupes. {T}roisi\`eme th\'eor\`eme de {L}ie ({L}yon,
  1986), Travaux en Cours, vol.~27, Hermann, Paris, 1988, pp.~39--49.

\bibitem{CuSn01}
R.~Cushman and J.~{\'S}niatycki, \emph{Differential structure of orbit spaces},
  Canad. J. Math. \textbf{53} (2001), no.~4, 715--755.

\bibitem{Cushman83}
R.H. Cushman, \emph{Reduction, {B}rouwer's {H}amiltonian, and the critical
  inclination}, Celestial Mech. \textbf{31} (1983), no.~4, 401--429.

\bibitem{Duistermaat}
J.J. Duistermaat, \emph{Dynamical systems with symmetry}, Available at
  http://www.math.uu.nl/people/duis/.

\bibitem{DuKo00}
J.J. Duistermaat and J.A.C. Kolk, \emph{{Lie {G}roups.}}, {Universitext.
  Berlin: Springer. viii, 344 p.}, 2000.

\bibitem{FeOrRa09}
R.~L. Fernandes, J.-P. Ortega, and T.S. Ratiu, \emph{The momentum map in
  {P}oisson geometry}, Amer. J. Math. \textbf{131} (2009), no.~5, 1261--1310.

\bibitem{JoRaZa11}
M.~Jotz, T.~Ratiu, and M.~Zambon, \emph{Invariant frames for vector bundles and
  applications}, Geometriae Dedicata (2011), 1--12.

\bibitem{JoRa11c}
M.~Jotz and T.S. Ratiu, \emph{Dirac structures, nonholonomic systems and
  reduction}, Preprint, arXiv:0806.1261 (2008).

\bibitem{JoRa10b}
\bysame, \emph{{Optimal Dirac reduction}}, Preprint, arXiv:1008.2283 (2010).

\bibitem{JoRa11a}
\bysame, \emph{{Induced Dirac structure on isotropy type manifolds}},
  Transform. Groups \textbf{16} (2011), no.~1, 175--191.

\bibitem{LiMa87}
P.~Libermann and C.-M. Marle, \emph{{Symplectic {G}eometry and {A}nalytical
  {M}echanics. Transl. from the French by Bertram Eugene Schwarzbach.}},
  {Mathematics and its Applications, 35. Dordrecht etc.: D. Reidel Publishing
  Company, a member of the Kluwer Academic Publishers Group. XVI, 526 p. },
  1987.

\bibitem{LuSn08}
T.~Lusala and J.~{\'S}niatycki, \emph{Stratified subcartesian spaces},
  arXiv:0805.4807v1, To appear in ``Canad. Math. Bull.'' (2008).

\bibitem{MaRa86}
J.E. Marsden and T.S. Ratiu, \emph{Reduction of {P}oisson manifolds}, Lett.
  Math. Phys. \textbf{11} (1986), no.~2, 161--169.

\bibitem{MaWe74}
J.E. Marsden and A.~Weinstein, \emph{{Reduction of symplectic manifolds with
  symmetry.}}, Rep. Math. Phys. \textbf{5} (1974), 121--130.

\bibitem{Matsumura80}
H.~Matsumura, \emph{Commutative {A}lgebra}, second ed., Mathematics Lecture
  Note Series, vol.~56, Benjamin/Cummings Publishing Co., Inc., Reading, Mass.,
  1980.

\bibitem{OrRa04}
J.-P. Ortega and T.S. Ratiu, \emph{{Momentum {M}aps and {H}amiltonian
  {R}eduction.}}, {Progress in Mathematics (Boston, Mass.) 222. Boston, MA:
  Birkh\"auser. xxxiv, 497~p. }, 2004.

\bibitem{Palais61}
R.S. Palais, \emph{On the existence of slices for actions of non-compact {L}ie
  groups}, Ann. of Math. (2) \textbf{73} (1961), 295--323.

\bibitem{Pflaum01}
M.J. Pflaum, \emph{Analytic and {G}eometric {S}tudy of {S}tratified {S}paces},
  Lecture Notes in Mathematics, vol. 1768, Springer-Verlag, Berlin, 2001.

\bibitem{Schwarz75}
G.W. Schwarz, \emph{Smooth functions invariant under the action of a compact
  {L}ie group}, Topology \textbf{14} (1975), 63--68.

\bibitem{Sikorski67}
R.~Sikorski, \emph{Abstract covariant derivative}, Colloq. Math. \textbf{18}
  (1967), 251--272.

\bibitem{Sikorski71}
\bysame, \emph{Differential modules}, Colloq. Math. \textbf{24} (1971/72),
  45--79.

\bibitem{Sikorski72}
\bysame, \emph{Wst\polhk ep do {G}eometrii {R}\'o\.zniczkowej}, Pa\'nstwowe
  Wydawnictwo Naukowe, Warsaw, 1972, Biblioteka Matematyczna, Tom 42.
  [Mathematics Library, Vol. 42].

\bibitem{Sniatycki02}
J.~{\'S}niatycki, \emph{Integral curves of derivations on locally
  semi-algebraic differential spaces}, Discrete Contin. Dyn. Syst. (2003),
  no.~suppl., 827--833, Dynamical systems and differential equations
  (Wilmington, NC, 2002).

\bibitem{Sniatycki03}
\bysame, \emph{Orbits of families of vector fields on subcartesian spaces},
  Ann. Inst. Fourier (Grenoble) \textbf{53} (2003), no.~7, 2257--2296.

\bibitem{Stefan74b}
P.~Stefan, \emph{Accessibility and foliations with singularities}, Bull. Amer.
  Math. Soc. \textbf{80} (1974), 1142--1145.

\bibitem{Stefan74a}
\bysame, \emph{Accessible sets, orbits, and foliations with singularities},
  Proc. London Math. Soc. (3) \textbf{29} (1974), 699--713.

\bibitem{Stefan80}
\bysame, \emph{Integrability of systems of vector fields}, J. London Math. Soc.
  (2) \textbf{21} (1980), no.~3, 544--556.

\bibitem{Sussmann73}
H.J. Sussmann, \emph{Orbits of families of vector fields and integrability of
  distributions}, Trans. Amer. Math. Soc. \textbf{180} (1973), 171--188.

\bibitem{Vaisman94}
I.~Vaisman, \emph{Lectures on the {G}eometry of {P}oisson {M}anifolds},
  Progress in Mathematics, vol. 118, Birkh\"auser Verlag, Basel, 1994.

\bibitem{Walczak73}
P.G. Walczak, \emph{A theorem on diffeomorphisms in the category of
  differential spaces}, Bull. Acad. Polon. Sci., S{\'e}r. Sci. Astr. Math.
  Phys. \textbf{21} (1973), 325--329.

\bibitem{Weyl46}
H.~Weyl, \emph{{The {C}lassical {G}roups, {T}heir {I}nvariants and
  {R}epresentations. 2nd ed.}}, {Princeton Mathematical Series. 1. Princeton,
  NJ: Princeton University Press, xiii, 320 p. }, 1946.

\end{thebibliography}

\def\cprime{$'$} \def\polhk#1{\setbox0=\hbox{#1}{\ooalign{\hidewidth
  \lower1.5ex\hbox{`}\hidewidth\crcr\unhbox0}}}
\providecommand{\bysame}{\leavevmode\hbox to3em{\hrulefill}\thinspace}
\providecommand{\MR}{\relax\ifhmode\unskip\space\fi MR }
\providecommand{\MRhref}[2]{%
  \href{http://www.ams.org/mathscinet-getitem?mr=#1}{#2}
}
\providecommand{\href}[2]{#2}

\end{document}